
\RequirePackage{fix-cm}

\documentclass[smallextended]{svjour3}       

\smartqed  
\usepackage{graphicx}
\usepackage{amsmath}
\DeclareMathOperator*{\argmin}{arg\,min}

\usepackage[utf8x]{inputenc}
\usepackage{cite}
\usepackage{url}
\usepackage{amsmath,amsfonts,amssymb}
\usepackage{fancyhdr}
\usepackage{algorithm}
\usepackage{algpseudocode}
\usepackage{accents}
\newcommand{\ubar}[1]{\underaccent{\bar}{#1}}
\usepackage{tkz-tab}
\usepackage{xpatch}
\usepackage{enumerate}   
\usepackage{multirow}
\usepackage{arydshln}
\usepackage{slashbox}
\usepackage{adjustbox}

\spnewtheorem{assumption}{Assumption}{\bf}{\it}
\spnewtheorem{corrolary}{Corrollary}{\bf}{\it}

\usepackage[title]{appendix}

\usepackage{verbatim}


\newcommand{\norms}[1]{\Vert#1\Vert}
\newcommand{\iprods}[1]{\langle #1\rangle}
\newcommand{\Lc}{\mathcal{L}}

\DeclareUnicodeCharacter{2212}{-}

\begin{document}

\title{Complexity of a linearized augmented Lagrangian  method for  nonconvex minimization  with nonlinear equality constraints
\thanks{The research leading to these results has received funding from: the European Union's Horizon 2020 research and innovation programme under the Marie Sklodowska-Curie  grant agreement No. 953348. 
 }}

\titlerunning{Linearized augmented Lagrangian  for nonconvex optimization}        

\author{Lahcen El Bourkhissi         \and
        Ion Necoara }
        
\authorrunning{L. El Bourkhissi         \and
        I. Necoara}

\institute{Lahcen El Bourkhissi \at
             Department of Automatic Control and Systems Engineering, National University of Science and Technology  Politehnica Bucharest, 
              Bucharest, 060042, Romania\\
              \email{lel@stud.acs.upb.ro}           
           \and
           Ion Necoara \at
             Department of Automatic Control and Systems Engineering, National University of Science and Technology  Politehnica Bucharest, 
             Bucharest, 060042, Romania.\\
             Gheorghe Mihoc-Caius Iacob Institute of Mathematical Statistics and Applied Mathematics of the Romanian Academy, 050711 Bucharest, Romania. \\
             \email{ion.necoara@upb.ro}
}

\date{Received: date / Accepted: date}

\maketitle

\begin{abstract}
In this paper, we consider a nonconvex optimization
problem with nonlinear equality constraints. We assume
that both, the objective function and the functional constraints
are locally smooth. For solving this problem, we propose a linearized augmented Lagrangian method, i.e., we linearize the objective function and the functional constraints in a Gauss-Newton fashion at the current iterate within the augmented Lagrangian function and add a quadratic regularization, yielding a  subproblem that is easy to solve, and  whose solution is the next primal iterate. The update of the dual multipliers is also based on the linearization of functional constraints.  Under a novel dynamic regularization parameter choice,  we prove boundedness and  global asymptotic convergence of the iterates to a first-order solution of the problem. We also derive convergence  guarantees for the iterates of our method to an $\epsilon$-first-order solution  in $\mathcal{O}(\sqrt{\rho} \epsilon^{-2})$ Jacobian evaluations, where $\rho$ is the penalty parameter. Moreover, when the problem exhibits a benign nonconvex property, we derive improved convergence results to an $\epsilon$-second-order solution.  Finally, we  validate the  performance of the proposed algorithm by numerically comparing it with the existing methods and software  from the literature.

\keywords{Nonconvex optimization \and linearized augmented Lagrangian \and nonlinear functional constraints \and convergence analysis.}

\subclass{68Q25 \and  90C06 \and 90C30.}
\end{abstract}


\section{Introduction}
\label{intro}
In many fields, such as machine learning, matrix optimization, statistics, control and signal processing, one finds applications that can be recast  as  nonconvex optimization problems with nonlinear functional equality constraints, see, e.g., \cite{LukSab:19,HonHaj:17,Roy:19}. In this paper, we solve  this  optimization problem  by means of an augmented Lagrangian approach. The augmented Lagrangian method, also known as the method of multipliers, was initially proposed in \cite{Hes:69,Pow:69} to minimize  objective functions subject to (linear) equality constraints. It
provides many theoretical advantages, even for non-convex
problems, e.g., no duality gap and exact penalty representation, see \cite{RocWet:98}. Moreover, the augmented Lagrangian framework is at the heart of the Alternating Direction Method of Multipliers (ADMM), a very efficient method for  optimization problems with separable structure  \cite{BoyPar:11, GloTal:89,CohHal:21,BotNgu:20,ElbNec:24}. 

\medskip 
\noindent \textit{\textbf{Related work.}} 
 The augmented Lagrangian approach has been extensively studied in the literature for convex problems, see e.g., \cite{Ber:15,BoyPar:11,SheTeb:14,GloTal:89}, and recently it has been extended to non-convex (smooth/non-smooth) problems with linear equality constraints in e.g.,  \cite{JiaLin:19,ZhaLuo:20,HonHaj:17,HajHon:19,KeMa:17}. However, there are relatively few studies on the use of the augmented Lagrangian framework for nonconvex optimization with \textit{nonlinear equality constraints}, see e.g., \cite{XieWri:21,CohHal:21, HalTeb:23, ElbNecPan:25, SahEft:19}. For example,  in \cite{XieWri:21} a Proximal Augmented Lagrangian (Proximal AL) method is proposed to solve smooth nonconvex optimization problems with nonlinear equality constraints. In this approach, a static regularization term is added to the standard augmented Lagrangian function. The authors show that if an approximate first- (or second-) order solution of the nonconvex  subproblem is found, with an error asymptotically approaching zero, then an $\epsilon$-first- (or $\epsilon$-second-) order solution to the original problem is obtained within $\mathcal{O}(\epsilon^{\eta-2})$ outer iterations, where $\eta \in [0,2]$ is a user-defined parameter. However, the total iteration complexity is shown to be $\mathcal{O}(\epsilon^{-5.5})$ Jacobian evaluations to obtain an $\epsilon$-first-order solution when the Newton conjugate gradient method from \cite{RoyOne:19} is used to solve the nonconvex subproblem at each outer iteration.

\medskip

\noindent Another augmented Lagrangian-based method is Algencan \cite{AndBir:08}. This method can handle problems with equality and inequality constraints and its complexity analysis was recently published in \cite{BirMar:20}. It has been proved that an $\epsilon$- first-order solution of the original problem can be obtained in $\mathcal{O}(|\text{log}(\epsilon)|)$ outer iterations when the penalty parameter is bounded. However, it should be noted that in Algencan the penalty parameter is increasing and therefore its boundedness  does not seem to be guaranteed. Moreover, Algencan also considers the full augmented Lagrangian in the subproblem, which is highly nonconvex due to the nonlinearity of the constraints,  similar to Proximal AL in \cite{XieWri:21}. Therefore, solving the subproblem in Algencan is also very difficult and lead to high computational complexity in terms of Jacobian evaluations. 

\medskip

\noindent Furthermore, \cite{SahEft:19}  proposed an augmented Lagrangian-based method for solving nonsmooth nonconvex problems with nonlinear equality constraints. The authors considered the computation of an  inexact solution of the nonconvex subproblem, whose objective is based on  the full
augmented Lagrangian,  and introduced a sufficiently decreasing stepsize for updating the dual variables to ensure their boundedness. The convergence analysis showed that this inexact augmented Lagrangian method  achieves an $\epsilon$-first-order solution within  $\mathcal{O}(\epsilon^{-4})$ Jacobian evaluations, assuming the penalty parameter scales  as $\mathcal{O}(\epsilon^{-1})$. 

\medskip

\noindent Recently, \cite{ElbNecPan:25} introduced the Linearized Perturbed Augmented Lagrangian (LPAL) method for solving nonsmooth nonconvex problems with nonlinear equality constraints. LPAL perturbs the augmented Lagrangian by scaling the dual variables with a sub-unitary parameter and linearizes the smooth parts of the objective and constraints at each iterate in a Gauss–Newton fashion, while keeping the nonsmooth term. This yields a convex subproblem that is simple to solve. Its solution becomes the next primal iterate, followed by a perturbed dual ascent step. Under a new constraint qualification condition, the authors establish boundedness of the dual iterates and prove convergence to an $\epsilon$-first-order solution in $\mathcal{O}(\epsilon^{-3})$ Jacobian evaluations.

\medskip

\noindent A different approach, that does not rely on the augmented Lagrangian framework, however still related to our work in the sense that it linearizes the nonconvex terms,  is presented in \cite{MesBau:21,TraDie:10} and is called Sequential Convex Programming (SCP). This method solves a sequence of convex approximations of the original problem by linearizing the nonconvex parts of the objective and of the functional constraints and preserving the structures that can be exploited by convex optimization techniques. In this case the subproblem has a (strongly) convex objective and linear constraints, for which efficient solution methods exist, e.g., \cite{GraBoy:14,NecKva:15}. However, to the best of our knowledge,  SCP methods converge  under mild assumptions only  locally \cite{MesBau:21,TraDie:10}.   
 
\medskip 

\noindent\textit{\textbf{Drawback of existing works.}}
A primary challenge when employing augmented Lagrangian methods lies in simultaneously ensuring feasibility and  optimality  of a test point. A common approach to address this challenge involves assuming the \textit{boundedness of the dual iterates} and progressively increasing the penalty parameter, as exemplified in \cite{AndBir:08,BirMar:20,HalTeb:23}. However, this boundedness assumption presents a significant limitation, as it is imposed on the algorithm's generated sequence rather than being an inherent property of the problem itself. Indeed, paper \cite{HalTeb:23} acknowledged the difficulty of ensuring boundedness of the multiplier sequence in nonconvex settings, stating: “the boundedness of the multiplier sequence in the nonconvex setting is a very difficult matter and not at all obvious because coercivity arguments do not apply directly, and we are not aware of any breakthrough in this area”.  To circumvent this restrictive assumption of bounded multipliers,   \cite{SahEft:19}  employs an augmented Lagrangian algorithm with a sufficiently decreasing stepsize for the dual updates. This strategy combined with a regularity condition help to control the growth of the dual iterates and to manage  feasibility. Paper \cite{ElbNecPan:25} adopts a different approach to control the dual iterates, combining a perturbation technique  for the augmented Lagrangian with a new constraint qualification condition.  However, the requirements of small dual steps in \cite{SahEft:19} and of perturbation in \cite{ElbNecPan:25}, respectively,  may slow the overall  performance of the  algorithms. 

\medskip 

\noindent Consequently, the methods proposed in \cite{HalTeb:23} and \cite{SahEft:19} exhibit relatively high computational complexity in terms of Jacobian evaluations, specifically of order $\mathcal{O}(\epsilon^{-4})$,  while  \cite{ElbNecPan:25}  improves the computational complexity to $\mathcal{O}(\epsilon^{-3})$. On the other hand, Proximal AL algorithm in \cite{XieWri:21} incurs an even higher computational complexity $\mathcal{O}(\epsilon^{-5.5})$. Meanwhile, the ADMM type algorithms in \cite{CohHal:21, ElbNec:24} require the functional constraints to be separable and linear in one block of variables, and exhibit a computational complexity of order $\mathcal{O}(\rho \epsilon^{-2})$ in terms of Jacobian evaluations, where $\rho$ is the penalty parameter. Finally, SCP type schemes \cite{MesBau:21, TraDie:10} offer only local convergence guarantees. 

\medskip 

\noindent Another key disadvantage of existing augmented Lagrangian algorithms, see  e.g., \cite{XieWri:21, AndBir:08, SahEft:19},  lies in  calling complicated subroutines, as the subproblems that need to be solved at each iteration are  highly nonconvex. 

\medskip 

\noindent \textit{\textbf{Our contribution.}}  
In this paper we  propose a new  Linearized Augmented Lagrangian method (called L-AL) for solving smooth nonconvex problems with nonlinear equality constraints. Our method overcomes some of the limitations of existing approaches. In particular, using  a prox-linear-type (Gauss-Newton) mechanism, allows us to obtain an easily solvable subproblem at each iteration. Moreover, to ensure the boundedness of the primal and dual iterates, we introduce a novel dynamic regularization parameter choice. 
This choice for the regularization parameter, in conjunction with  the Linear Independence Constraint Qualification condition, enables us to prove boundedness of the iterates generated by our proposed algorithm and to establish improved convergence guarantees. Hence, our method combines the advantages of both Proximal AL and SCP approaches, as it enjoys global convergence guarantees and features subproblems that are easy to solve. More precisely, our main contributions~are:
\begin{enumerate}[(i)]
\item We linearize both the objective function and the functional constraints within the augmented Lagrangian function at the current iterate in a Gauss-Newton fashion and add a dynamic regularization term. The solution of this subproblem is the next primal iterate.  The update of the dual multipliers is also based on a linearization technique using the solution of this subproblem. These primal and dual updates lead to a \textit{novel  Linearized Augmented Lagrangian} method, called L-AL.   Our algorithm exhibits several desirable properties. Notably, it only necessitates evaluations of the problem’s function values and their first-order derivatives. Moreover, each iteration requires minimizing   a simple unconstrained quadratic convex  subproblem that  reduces to solving a linear system of equalities. These properties enable the efficient handling of large-scale nonconvex problems by L-AL algorithm.  

\item Under a novel dynamic regularization parameter choice and under the Linear Independence Constraint Qualification (LICQ) condition, we prove that the primal and dual iterates generated by L-AL method are bounded. \textit{To the best of our knowledge, this represents one of the first results of this nature obtained within the linearized augmented Lagrangian framework}.  

\item We also establish global convergence guarantees, proving that any limit point of the primal and dual sequences is a KKT point of the original  problem. Furthermore, by leveraging the Kurdyka-Lojasiewicz property, we demonstrate
the convergence of the entire sequence generated by L-AL algorithm.   We also demonstrate that the primal iterates of our method reach an $\epsilon$-first-order  solution of the  problem in at most $\mathcal{O}(\sqrt{\rho}\epsilon^{-2})$ Jacobian evaluations, where $\rho$ is the penalty parameter. \textit{To the best of our knowledge, this is the optimal computational complexity in the context of augmented Lagrangian and penalty methods for smooth nonconvex constrained optimization problems}.

\item  
The theoretical complexity bounds  for augmented Lagrangian algorithms  do not always reflect the observed good practical performance of these methods.  To address this discrepancy, we specialize our L-AL algorithm for a class of optimization problems exhibiting a benign nonconvex property (also called strict saddle condition). For this restricted class, we establish improved complexity bounds ranging from $\mathcal{O}(\epsilon^{-2})$ to $\mathcal{O}(\epsilon^{-1})$ to attain even an $\epsilon$-second-order solution. \textit{It seems that this is the first complexity result for an augmented Lagrangian-type algorithm specifically designed for solving this class of benign nonconvex problems with nonlinear equality constraints.}

\item Finally, in addition to proposing a novel algorithm and providing its convergence guarantees, we demonstrate the algorithm’s efficiency through numerical experiments using test problems from the CUTEst library,  numerically comparing it with some well-known  existing methods and software such as SCP \cite{MesBau:21}, IPOPT \cite{WacBie:06} and Algencan \cite{AndBir:08}. 
\end{enumerate}


\noindent The paper is structured as follows. In Section \ref{sec2}, we introduce our problem of interest  and some notions necessary for our analysis. In Section \ref{sec3}, we present our algorithm, followed in Sections \ref{sec4} and  \ref{sec5} by its convergence analysis. Finally, in Section \ref{sec6}, we compare numerically our method with existing algorithms. 


\section{Problem formulation and  preliminaries}
\label{sec2}
In this paper, we consider the following nonlinear optimization problem:
\begin{equation}
\begin{aligned}\label{eq1}
& \underset{x\in\mathbb{R}^n}{\min}
& & f(x)\\
& \hspace{0.2cm}\textrm{s.t.}
& & \hspace{0.07cm} F(x)=0,
\end{aligned}
\end{equation}
where $f:\mathbb{R}^{n}\to {\mathbb{R}}$ and  $F(x)\triangleq{(f_1(x),...,f_m(x))}^T$, with $f_i:\mathbb{R}^{n}\to {\mathbb{R}}$ for all $i=1:m$. We assume the functions  $f, f_i \in \mathcal{C}^2$ for all  $i=1:m$   and $F$ is nonlinear. Moreover, we assume that the problem is well-posed i.e., the feasible set is nonempty and the optimal value is finite. Before introducing the main assumptions for our analysis, we would like to clarify some notations.  We use $\|\cdot \|$ to denote the $2-$norm of a vector or of a matrix, respectively.  For a differentiable function $f:\mathbb{R}^n\to\mathbb{R}$, we denote by $\nabla f(x)\in\mathbb{R}^n$ its gradient at a point $x$. For a differentiable vector function $F:\mathbb{R}^n  \to\mathbb{R}^m$, we denote its Jacobian at a given point $x$ by ${J}_F(x)\in\mathbb{R}^{m\times n}$. Moreover, for a given matrix $J\in\mathbb{R}^{m\times n}$, we denote by $\sigma_{\text{min}}(J)$ the smallest singular value. In our analysis, we often use the following inequality:
\begin{equation}\label{inequality_prop}
   \langle a  ,b\rangle \leq \frac{1}{2r}\|a\|^2+\frac{r}{2}\|b\|^2 \hspace{0.5cm}\forall a,b\in\mathbb{R}^m \text{ and } r>0.
\end{equation}

\noindent Let us now present the main assumptions considered  for problem \eqref{eq1}:

\begin{assumption}\label{assump1}
Assume that there exists $\rho_0\geq0$ such that $f(x)+\frac{\rho_0}{2}\|F(x)\|^2$ has compact level sets, i.e., for all $\alpha\in\mathbb{R}$, the following set is empty or compact:
\[
\mathcal{S}_{\alpha}^0\triangleq{\{x:\, f(x)+\frac{\rho_0}{2}\|F(x)\|^2\leq\alpha\}}.
\]
\end{assumption}

\begin{assumption}
\label{assump2}
For any compact set $\mathcal{S}\subseteq\mathbb{R}^n$, there exist positive constants $M_f, M_F, \sigma, L_f, L_F$ such that $f$ and $F$ satisfy the following conditions:
\begin{enumerate}[(i)]
  \item $ \|\nabla f(x)\|\leq M_f, \;\; \|\nabla f(x)-\nabla f(y)\|\leq L_f\|x-y\| \;\text{ for all } x, y\in\mathcal{S}$.\label{ass1}
  \item $  \|{J_F}(x)\|\leq M_F, \;\; \sigma_{\emph{min}}({{J_F}(x)}) \geq \sigma>0\;\text{ for all } x \in\mathcal{S}$. \label{ass2}
  \item $    \|{J_F}(x)-{J_F}(y)\|\leq L_F\|x-y\|\;\text{ for all } x, y\in\mathcal{S}${.}\label{ass3}
\end{enumerate}
\end{assumption}

\begin{assumption}\label{assump3}
There exist finite $\ubar{f}$ and $\bar{f}$ such that $f(x)\leq\bar{f}$ for all $x\in\{x \in \mathbb{R}^n:\, \|F(x)\|\leq1\}$ and $f(x) \geq \ubar{f}$ for all $x \in \mathbb{R}^n$.
\end{assumption}

\noindent Note that these assumptions are standard in the nonconvex optimization literature, see e.g., \cite{XieWri:21,CohHal:21, GraYua:21, ElbNecPan:25, ElbNec:25, LiChe:21, DemJia:23}.  In fact, these assumptions are not restrictive as they need to hold only  locally. Indeed,  large classes of problems satisfy these assumptions as discussed below.

\begin{remark}
Assumption \ref{assump1} holds e.g., when $f(\cdot)+\rho_0/2\|F(\cdot)\|^2$ is coercive for some $\rho_0\geq0$; when $f(\cdot)$ is strongly convex or $f(\cdot)$ is bounded from bellow and the components of $F(\cdot)$ are strongly convex, as in the case of dictionary learning applications. It also holds when  $f(x)=\frac{1}{2}x^TQx-p^Tx, F(x)=Ax-b$ and $Q$ is a positive definite matrix on $\text{null}(A):=\{x: \,Ax=0\}$. Note that Assumption \ref{assump1} is introduced here just to avoid assuming that the primal iterates of our algorithm  are bounded (boundedness of the primal iterates is commonly assumed in the literature, see e.g.,  \cite{CohHal:21, HalTeb:23, ElbNec:25, ElbNecPan:25,GraYua:21}).
\end{remark}

\begin{remark}
Assumption \ref{assump2} allows general classes of problems. In particular, conditions \textit{(\ref{ass1})} hold if $f(\cdot)$ is differentiable and $\nabla f(\cdot)$ is \textit{locally}  Lipschitz continuous on a neighborhood of $\mathcal{S}$. Conditions \textit{(\ref{ass2})} hold when
$F(\cdot)$ is differentiable on a neighborhood of $\mathcal{S}$ and satisfies an LICQ condition over $\mathcal{S}$ (hence, $m \leq n$). Finally, condition \textit{(\ref{ass3})} holds
if ${J_F}(\cdot)$ is \textit{locally} Lipschitz continuous on $\mathcal{S}$. Note that any twice continuously differentiable function is locally Lipschitz and locally smooth on a bounded set.
\end{remark}

\begin{remark}
For  Assumption \ref{assump3} to hold, it is sufficient that the set $\{x: \,\|F(x)\|\leq 1\}$ is compact and that $f(\cdot)$ is coercive. In fact, we do not need this assumption if we can choose the starting point of our algorithm, $x_0$, such that $F(x_0)=0$, that is, the initial point is feasible and the objective function is strongly convex.
\end{remark}

\noindent The following lemma is an immediate consequence of Assumption \ref{assump1}.
\begin{lemma} If  Assumption \ref{assump1} holds, then $f(\cdot)+\frac{\rho_0}{2}\|F(\cdot)\|^2$ is lower bounded:
\begin{equation}\label{lem1}
 \ubar{P}\triangleq{\inf_{x\in\mathbb{R}^n}\{ f(x)+\frac{\rho_0}{2}\|F(x)\|^2\}}>-\infty.   
\end{equation}
\end{lemma}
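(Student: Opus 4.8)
The plan is to exploit the elementary fact that a continuous function whose sublevel sets are all either empty or compact attains its global minimum, and hence is bounded below. Write $g(x) \triangleq f(x) + \frac{\rho_0}{2}\|F(x)\|^2$; since $f, f_1, \dots, f_m \in \mathcal{C}^2$, the map $g$ is continuous on $\mathbb{R}^n$.

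First I would fix an arbitrary point $x_0 \in \mathbb{R}^n$ and set $\alpha_0 \triangleq g(x_0)$. By construction $x_0 \in \mathcal{S}_{\alpha_0}^0$, so this sublevel set is nonempty, and therefore compact by Assumption~\ref{assump1}. Next, by the Weierstrass extreme value theorem, the continuous function $g$ restricted to the compact set $\mathcal{S}_{\alpha_0}^0$ attains its minimum at some $x^\star \in \mathcal{S}_{\alpha_0}^0$, i.e., $g(x^\star) \le g(x)$ for all $x \in \mathcal{S}_{\alpha_0}^0$.

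It then remains to argue that $x^\star$ is in fact a global minimizer of $g$ over $\mathbb{R}^n$. Indeed, for any $x \notin \mathcal{S}_{\alpha_0}^0$ we have, by definition of the sublevel set, $g(x) > \alpha_0 = g(x_0) \ge g(x^\star)$, while for $x \in \mathcal{S}_{\alpha_0}^0$ we already have $g(x) \ge g(x^\star)$. Combining the two cases yields $g(x) \ge g(x^\star)$ for all $x \in \mathbb{R}^n$, so that $\bar{L} = \inf_{x \in \mathbb{R}^n} g(x) = g(x^\star) > -\infty$, which is the claim.

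There is no genuine obstacle here: the only point requiring a little care is the degenerate possibility that some sublevel sets are empty, which is precisely why one fixes the threshold $\alpha_0$ to be a value actually attained by $g$, guaranteeing that $\mathcal{S}_{\alpha_0}^0$ is nonempty; continuity of $g$, needed for Weierstrass, follows directly from the $\mathcal{C}^2$ regularity of $f$ and $F$ assumed in the problem setup.
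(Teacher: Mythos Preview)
Your argument is correct and complete: fixing a nonempty sublevel set, invoking Weierstrass on that compact set, and then observing that points outside the sublevel set have strictly larger $g$-value is exactly the standard route to this conclusion. The paper defers to the appendix for this lemma but does not in fact spell out the argument there, so your write-up is in line with (indeed more detailed than) what the paper provides, and nothing further is needed.
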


\noindent Further,   let us  introduce the following definition: 
\begin{definition}\label{firstorder}[First-order solution  and $\epsilon$-first-order solution of \eqref{eq1}]
The vector $x^*$ is said to be a first-order solution of problem \eqref{eq1} if  $\exists\lambda^*\in\mathbb{R}^m$ such that: 
\begin{equation*}
\nabla f(x^*)+{J_F}(x^*)^T\lambda^*=0\hspace{0.3cm} \text{and} \hspace{0.3cm} F(x^*)=0.
\end{equation*} 
Moreover, $x_{\epsilon}^*$ is an $\epsilon$-first-order solution of \eqref{eq1} if  $\exists \lambda_\epsilon^* \in \mathbb{R}^m$ such that:
\begin{equation*}
    \|\nabla f(x_{\epsilon}^*)+{J_F}(x_{\epsilon}^*)^T\lambda_{\epsilon}^*\|\leq\epsilon\hspace{0.3cm} \text{and} \hspace{0.3cm} \|F(x_{\epsilon}^*)\|\leq\epsilon.
\end{equation*} 
\end{definition}
The pair $(x^*,\lambda^*)$ is called a KKT point of  problem \eqref{eq1}.  Let us also introduce the notion of an $\epsilon$-second-order solution to \eqref{eq1}.

\begin{definition} \label{de:2nd_approx_sol} [{\(\epsilon\)-second-order solution of \eqref{eq1}}] 
A vector $x^{*}_\epsilon$ is called an \(\epsilon\)-second-order  solution of  problem \eqref{eq1}  if  $\exists \lambda^{*}_\epsilon \in \mathbb{R}^m$ such that:
\begin{align}
\label{eq:2nd_approx_sol}
& \Vert \nabla{f}(x^{*}_\epsilon) + J_F(x^{*}_\epsilon)^T \lambda^{*}_\epsilon \Vert   \leq  \epsilon, \quad  \Vert \nabla F(x^{*}_\epsilon)\Vert   \leq  \epsilon, \; \text{ and} \nonumber \\
& d^T \left( \nabla^2f(x^{*}_\epsilon) + \sum_{i=1}^m (\lambda^{*}_\epsilon)_i \nabla^2F_i(x^{*}_\epsilon) \right) d  \geq  -\epsilon \quad \forall d \in \mathbb{B}_1(x^{*}),
\end{align}
where $\mathbb{B}_1(x) := \{d \in \mathbb{R}^n \mid  J_F(x) d = 0, \ \Vert d \Vert = 1 \}$.
\end{definition}

\noindent Finally, let us  introduce the  Kurdyka-Lojasiewicz (KL) property, a condition widely used in the context of nonconvex optimization \cite{AttBol:13}. Since our functions are all continuously differentiable we adapt the KL definition to this setting.  For a function	 $\Phi:\mathbb{R}^d\to {\mathbb{R}}$ and  $−\infty<\tau_1<\tau_2\leq
+\infty$, we define $[\tau_1<\Phi<\tau_2]=\{x \in\mathbb{R}^d :\tau_1<\Phi(x)<\tau_2\}$. 
\begin{definition} \label{def2}
Let $\Phi : \mathbb{R}^d \to {\mathbb{R}}$  be a continuously differentiable  function that takes constant value on a set $\Omega$. We say that $\Phi$ satisfies the KL property on $\Omega$ if there exists $ \epsilon>0, \tau>0$ and $\varphi\in\Psi_{\tau}$ (where  $\Psi_{\tau}$ denotes the set of all continuous concave functions $\varphi: [0, \tau] \to [0,+\infty)$ satisfying $\varphi(0) = 0$ and $\varphi$ is continuously differentiable on $(0, \tau)$, with $\varphi' > 0$ over $(0, \tau)$) such that for every
$x^* \in \Omega$ and every element $x$ in the intersection $\{x\in\mathbb{R}^d: \text{ dist}(x,\Omega)<\epsilon\}\cap[\Psi(x^*)<\Psi(x)<\Psi(x^*)+\tau]$, we have:
\[
    \varphi'\big(\Phi(x) − \Phi(x^*)\big) \cdot  \|\nabla \Phi(x)\|  \geq1.
\] 
\end{definition}
This definition covers many classes of functions arising in practical optimization. For example, if $\Phi$ is a  semialgebraic function (including convex piecewise linear/quadratic functions), then $\Phi$ is a KL function with $\varphi(s)=s^{1-\nu}$, where  $\nu \in [0,1)$, see \cite{AttBol:13}. The function $g(Ax)$, where $g$ is strongly convex on a compact set and  twice differentiable, and $A \in \mathbb{R}^{m\times n}$, is also 
a KL function.


\section{A linearized augmented Lagrangian method}\label{sec3}
In this section, we propose a new algorithm for solving problem \eqref{eq1} using the augmented Lagrangian framework. Let us first introduce few  notations. The augmented Lagrangian function associated with the problem \eqref{eq1} is:
\[
\mathcal{L}_{\rho}(x,\lambda)=f(x)+\langle \lambda,F(x) \rangle+\frac{\rho}{2}{\|F(x)\|^2}, 
\]
where the penalty parameter $\rho \geq 0$. In the sequel, we also use the  notations:
\begin{gather*}
     l_f(x;\bar{x}):=f(\bar{x})+\langle\nabla f(\bar{x}),x-\bar{x}\rangle,\quad    l_F(x;\bar{x}):=F(\bar{x})+ J_F(\bar{x})(x-\bar{x}) \;\; \forall x,\bar{x}.
\end{gather*}
Further, let us denote the following function derived from linearization of  objective and the functional constraints in a Gauss-Newton fashion, at a given point $\bar{x}$, within the augmented Lagrangian function:
\begin{align*}
 \bar{\mathcal{L}}_{\rho}(x,\lambda;\bar{x}) = l_f(x;\bar{x}) + \langle \lambda  , l_F(x;\bar{x}) \rangle+\frac{\rho}{2}{\| l_F(x;\bar{x})\|^2}.   
\end{align*}

\noindent  For the convergence analysis let us define the following Lyapunov  function:
\begin{equation}\label{P}
  P(x,\lambda,y,\gamma)=\mathcal{L}_{\rho}(x,\lambda)+\frac{\gamma}{2}\|x-y\|^2.
\end{equation}
Note that such Lyapunov function is standard in the analysis of augmented Lagrangian based methods, see e.g., \cite{XieWri:21, CohHal:21, HalTeb:23}. The evaluation of the Lyapunov function along the iterates of L-AL algorithm  is denoted by:
\begin{equation}\label{lyapunov_function}
 P_{k}=P\left(x_k,\lambda_k,x_{k-1},\frac{\beta_k}{2}\right) \hspace{0.3cm} \forall k\geq 0,
\end{equation}
with the convention that $x_{-1}=x_0$, and $\beta_0$ can be any positive real number.
In the sequel, we also denote: 
\[
\Delta x_{k}=x_{k}-x_{k-1} \hspace{0.3cm} \text{and} \hspace{0.3cm} \Delta\lambda_{k}=\lambda_{k}-\lambda_{k-1}\hspace{0.3cm}\forall k\geq 0,
\]
with the convention that $\lambda_{-1}=\lambda_0$. To solve the optimization problem \eqref{eq1} we propose the following \textit{Linearized Augmented Lagrangian} (L-AL) algorithm, i.e., we linearize the objective function and the functional constraints in the augmented Lagrangian function at the current iterate using  a Gauss-Newton type mechanism and add a quadratic regularization.

\begin{algorithm}
\caption{Linearized augmented Lagrangian (L-AL)}\label{alg1}
\begin{algorithmic}[1]
\State  $\textbf{Initialization: } x_{-1}=x_0, \lambda_0, \text{ and } \rho \geq 1, \mu>1, \beta_1 \geq \ubar{\beta}>0$. 
\State $k \gets 0$
\While{$\text{ stopping criterion is not satisfied }$}
    \State find the smallest $i_k \geq 0$ such that the points 
    \State $x_{k+1}\gets\argmin_{x}{\bar{\mathcal{L}}_{\rho}(x,\lambda_{k};x_{k})+\frac{\mu^{i_k}\beta_{k+1}}{2}{\|x-x_{k}\|}^2}$
    \State $  \lambda_{k+1}\gets\lambda_{k}+\rho \left(F(x_{k})+{J_F}(x_{k})(x_{k+1}-x_{k})\right)$
    \State satisfy 
    \begin{equation} \label{decrease_algorithm}
        P_{k+1} - P_{k} \leq \frac{3}{2\rho}\left\|\Delta\lambda_{k+1}\right\|^2-\frac{\mu^{i_k}\beta_{k+1}}{4}\|\Delta x_{k+1}\|^2-\frac{\beta_{k}}{4}\|\Delta x_{k}\|^2.
    \end{equation}    
    \State $\beta_{k+1} \gets \mu^{i_k}\beta_{k+1}$
    \State $\beta_{k+2} \gets  \max \left\{ \frac{\beta_{k+1}}{\mu}, \ubar{\beta} \right \}$
    \State  $k \gets k+1$
\EndWhile
\end{algorithmic}
\end{algorithm}

\medskip 

\noindent \textit{To the best of our knowledge L-AL algorithm is new and its convergence behaviour has not been analyzed before in the literature.}   Note that  the objective function of the subproblem in step 5 of  Algorithm \ref{alg1}, which is unconstrained, is quadratic and strongly convex. Therefore, finding a solution of the subproblem in step 5 is  equivalent to solving a linear system of equalities.  Hence,   efficient solution methods exist for solving the subproblem, see e.g., \cite{GraBoy:14,NecKva:15}.

\medskip 

\noindent It is also important to note that our update of the dual multipliers is different from the literature, i.e., instead of evaluating the functional constraints at the new test point $x_{k+1}$ and updating clasically $\lambda_{k+1}=\lambda_{k}+\rho F(x_{k+1})$ as e.g., in  \cite{XieWri:21,CohHal:21}, we evaluate their linearization at $x_{k}$ in the new point $x_{k+1}$ and update as $\lambda_{k+1} = \lambda_{k}+\rho (F(x_{k})+{J_F}(x_{k})(x_{k+1}-x_{k}))$.


\section{Convergence analysis}\label{sec4}
In this section, we derive the asymptotic convergence of the iterates of  L-AL algorithm (Algorithm \ref{alg1}) and the computational complexity to obtain an $\epsilon$-first-order solution for  problem \eqref{eq1}. In the rest of this paper, for the sake of clarity, we provide the proofs of all the lemmas in Appendix.
Let us start by bounding $\|\Delta\lambda_{k+1}\|^2$. 
\begin{lemma}\label{lambda_bound}[Bound for $\|\Delta\lambda_{k+1}\|$] Consider Algorithm \ref{alg1}.
Suppose that for a fixed $k\geq 1$, Assumption \ref{assump2} holds for some set $\mathcal{S}$ and that $x_{k-1}, x_k  \in \mathcal{S}$. Then, 
\begin{align}
    \label{lambda_squared}
\|\Delta\lambda_{k+1}\|^2\leq c (\beta_{k+1})\|\Delta x_{k+1}\|^2+c (\beta_{k})\|\Delta x_{k}\|^2,
\end{align}
where \;$ c (\beta)=\frac{4(1+3\mu)^2\left(L_f M_F + M_f L_F\right)^2 }{\sigma^4} + \frac{4(1+3\mu)^2 M_F^2}{\sigma^4}(\beta - \mu L_f)^2$.
\end{lemma}

\begin{proof}
See Appendix.
\end{proof}

\noindent Next, we show that under a \textit{novel dynamic regularization parameter choice}, $\beta_{k+1}$, Algorithm \ref{alg1} is well-defined,  in particular, the inner process terminates in a finite number of steps.

\begin{lemma}\label{lemma3}[Existence of $\beta_{k+1}$] Consider Algorithm \ref{alg1}.
Suppose that for a fixed $k\geq 0$, Assumption \ref{assump2} and \ref{assump3} hold for some set $\mathcal{S}$ and that $ x_k , x_{k+1} \in \mathcal{S}$ together with $\lambda_k \in \Lambda$, where $\Lambda$ is a compact set of $\mathbb{R}^m$. If $\beta_{k+1}$ is chosen to satisfy:
\begin{equation} \label{eq_assu}
    \beta_{k+1} \geq L_f + L_F\sqrt{2\rho}\sqrt{\mathcal{L}_{\rho}(x_k,\lambda_k) + \frac{1}{2 \rho}\|\lambda_k\|^2 -\ubar{f}}, 
\end{equation}
 then  inequality  \eqref{decrease_algorithm} holds.
\end{lemma}

\begin{proof}
See Appendix.
\end{proof}

\noindent Note that  for $k=0$, $\lambda_0$ is bounded, and for $k>0$, in addition to having $x_k, x_{k+1} \in \mathcal{S}$, if we also have $x_{k-1} \in \mathcal{S}$, then from the proof of Lemma \ref{lambda_bound}, there exists a ball in $\mathbb{R}^m$, denoted by $\Lambda$, such that $\lambda_k \in \Lambda$.  Clearly,  for any $\rho \geq 1$ at any iteration $k \geq 0$, the inner process in Algorithm \ref{alg1} terminates in at most $i_k$ steps, where $i_k$ satisfies (see also Remark \ref{w-d} below):
\[
\beta_{k+1} \gets \mu^{i_k} \beta_{k+1} \geq L_f + L_F\sqrt{2\rho}\sqrt{\mathcal{L}_{\rho}(x_k,\lambda_k) + \frac{1}{2 \rho}\|\lambda_k\|^2 -\ubar{f}}. 
\]
Additionally,   $\beta_{k+1}$ can be always  bounded as follows:
\begin{equation} 
\label{bar_beta}
    \ubar{\beta}\leq\beta_{k+1} \leq \mu \left(L_f+ L_F\sqrt{2\rho}\sqrt{\mathcal{L}_{\rho}(x_k,\lambda_k) + \frac{1}{2 \rho}\|\lambda_k\|^2 - \ubar{f}}\right) \quad \forall k \geq 0.
\end{equation}

\noindent Let $\rho \geq 1$. In the sequel, we assume that $x_0$ is chosen such that:
\begin{equation}\label{eq4}
    \|F(x_0)\|^2\leq\min\left\{1,\frac{c_0}{\rho}\right\} \hspace{0.7cm} \text{ for some } c_0>0.
\end{equation}
Then,  from Assumption \ref{assump3}, we have $f(x_0)\leq\bar{f}$. Let us define:
\begin{equation}\label{alpha_hat}
    \bar{P}\triangleq \bar{f}+c_0+4\|\lambda_0\|^2+ 2,
\end{equation}
and
\begin{equation}\label{beta_bar_def}
   \bar{\beta}\triangleq \mu \left(L_f+ L_F\sqrt{2\rho}\sqrt{\bar{P}  - \ubar{f}}\right).
\end{equation}
Furthermore, we define the diameter of  compact set $\mathcal{S}^0_{\bar{P}}$ (see Assumption \ref{assump1}):
\begin{equation} \label{diameter}
   D_{\bar{P}} = \max \{ \|x - y\| \mid x, y \in \mathcal{S}^0_{\bar{P}} \},
\end{equation}
and
\begin{equation} \label{bar_gamma_before}
    \bar{\gamma} \triangleq \frac{8\mu^2L_F^2D_{\bar{P}}^2(\bar{P}   - \ubar{f})}{\sigma^2} + 1.
\end{equation}

\noindent The following lemma shows the decrease of the Lyapunov function along any  two consecutive iterates.
\begin{lemma}\label{conditional_decrease}[Decrease] Consider Algorithm \ref{alg1}.
Suppose that for a fixed $k\geq 1$, Assumption \ref{assump2} holds for some set $\mathcal{S}$ and that $x_{k-1}, x_k , x_{k+1} \in \mathcal{S}$. If, we have
\begin{align}\label{theright_choice_of_rho}
\rho \geq \max\Bigg\{&\frac{48(1+3\mu)^2\left(L_f M_F + M_f L_F\right)^2 }{\mu L_f\sigma^4}, \frac{48(1+3\mu)^2 M_F^2}{\sigma^4}(\beta_k - \mu L_f), \nonumber\\
&\quad \frac{48(1+3\mu)^2 M_F^2}{\sigma^4}(\beta_{k+1} - \mu L_f)\Bigg\},
\end{align}
then the Lyapunov function decreases  according to the following formula:
 \begin{equation}\label{decrease_Lyapunov_lem}
     P_{k+1}-P_{k}\leq-\frac{\beta_{k+1}}{8}\|\Delta x_{k+1}\|^2-\frac{\beta_{k}}{8}\|\Delta x_{k}\|^2. 
 \end{equation} 
\end{lemma}

\begin{proof}
See appendix.
 \end{proof}

\noindent Let us now bound the gradient of the augmented Lagrangian function.
\begin{lemma}\label{bounded_gradient}[Boundedness of $\nabla\mathcal{L}_{\rho}$]
Consider Algorithm \ref{alg1}.
Suppose that for a fixed $k\geq 1$, Assumption \ref{assump2} holds for some set $\mathcal{S}$ and that $x_{k-1}, x_k , x_{k+1} \in \mathcal{S}$. Then, we have:
\[
    \|\nabla\mathcal{L}_{\rho}(x_{k+1},\lambda_{k+1})\|\leq\Gamma_{k+1}\|\Delta x_{k+1}\|  +\Gamma_{k}\|\Delta x_{k}\| + c_{k+1} \|\Delta x_{k+1}\|^2 + c_k\|\Delta x_{k}\|^2,
\]
where   $c_{k} = \frac{L_F}{2}\left(1+\frac{2\beta_{k} +\rho M_F \sigma}{\sigma}\right)$ and 
\[
    \Gamma_{k}=\left(M_F+\frac{1}{\rho}\right)\frac{(2+ 3\mu)(L_f M_F + M_f L_F)  + (2+ 3\mu) M_F (\beta_{k}- \mu L_f)}{\sigma^2}. 
\]
\end{lemma}
\begin{proof}
See Appendix.
\end{proof}
 
\noindent In the remainder of this paper, we assume that $\rho$ is chosen as follows:
\begin{align}\label{rho_def}
& \rho \geq \!\max\Bigg\{1, \rho_0\!+\!1, \rho_0 \!+\! \frac{12 M_f^2}{\sigma^2}, 2\rho_0\! +\!\frac{48\mu L_f}{\sigma^2}, \frac{48(1\!+\!3\mu)^2\left(L_f\! M_F \!+\! M_f\! L_F\right)^2 }{\mu L_f\sigma^4}, \nonumber \\
&\frac{4608\mu^2\!(1\!+\!3\mu)^4\!M_F^4\!L_F^2(\bar{P}\!-\!\ubar{f})}{\sigma^8}, \rho_0\!+\!\frac{2M_f^2 \! +\! 4 \mu^2\!L_f^2D_{\bar{P}}^2\! +\! 8\mu^2\!L_F^2\!D_{\bar{P}}^2\!(\bar{P}   \!-\! \ubar{f})\rho_0}{\sigma^2}\Bigg\}.
\end{align}
Note that this choice of $\rho$ depends only on the parameters of the problem's functions. Note that by making use of the definition of $\bar{\beta}$, see \eqref{beta_bar_def}, it is easy to see that \eqref{rho_def} implies:
\begin{align}\label{choice_rho}
    &\rho\geq \max\Bigg\{1, \rho_0+1, \rho_0 + \frac{12 M_f^2}{\sigma^2}, \rho_0 +\frac{24\bar{\beta}}{\sigma^2}, \frac{48(1+3\mu)^2\left(L_f M_F + M_f L_F\right)^2 }{\mu L_f\sigma^4}, \nonumber\\
    & \frac{48(1\!+\!3\mu)^2 M_F^2(\bar{\beta} \! - \! \mu L_f)}{\sigma^4}, \rho_0\!+\!\frac{2M_f^2 \!+ \!4 \mu^2L_f^2D_{\bar{P}}^2 \!+\! 8\mu^2L_F^2D_{\bar{P}}^2(\bar{P}  \! - \! \ubar{f})\rho_0}{\sigma^2}\Bigg\}.
\end{align}

\noindent  Before proving that  iterates $\{(x_{k},\lambda_{k})\}_{k\geq1}$ generated by Algorithm \ref{alg1} are bounded, we present the following technical lemma, which is useful for our convergence analysis and for well-definiteness of Algorithm \ref{alg1}.
\begin{lemma}\label{usefull_lemma}  Consider Algorithm \ref{alg1}.
Suppose that for a fixed $k\geq 1$, Assumption \ref{assump2} and \ref{assump3} hold for some set $\mathcal{S}$ and that $  x_{k-1} \in \mathcal{S}$. If $\beta_{k} \leq \bar{\beta}$ and $\rho$ is chosen as in \eqref{rho_def}, then we have the following:
\begin{subequations}\label{important00}
  \begin{gather}
    \frac{3}{\rho}\|\lambda_{k}\|^2 - \frac{\beta_k}{4}\|x_{k} -x_{k-1}\|^2 \leq 1, \label{first_bound}\\
    \mathcal{L}_{\rho}(x_k,\lambda_k) + \frac{1}{2\rho}\|\lambda_k\|^2 \leq P_k +1. \label{second_bound} 
  \end{gather}
\end{subequations}
\end{lemma}
\begin{proof}
See Appendix.
\end{proof}

\begin{remark}
\label{w-d}
\noindent From the previous lemmas it is clear that any iteration $k \geq 0$, the inner process in Algorithm \ref{alg1} terminates in at most $i_k$ steps, where $i_k$ satisfies
\[
\mu^{i_k}\ubar{\beta} \geq L_f + L_F\sqrt{2\rho}\sqrt{\mathcal{L}_{\rho}(x_k,\lambda_k) + \frac{1}{2 \rho}\|\lambda_k\|^2 -\ubar{f}},
\]
 as the right hand side in the previous relation is finite: 
\[    \mathcal{L}_{\rho}(x_k,\lambda_k) + \frac{1}{2\rho}\|\lambda_k\|^2  \overset{\eqref{second_bound}}{\leq}  P_k +1   \overset{\eqref{decrease_Lyapunov_lem}}{\leq} \max\{P_0, P_1\} +1,  \]
provided that  $\rho$ is chosen as in  \eqref{rho_def}. 
\end{remark}

\noindent The following two lemmas show that the sequence $\{(x_{k},\lambda_{k})\}_{k\geq1}$ generated by  Algorithm \ref{alg1}  is bounded. These results are new in the context of linearized augmented Lagrangian framework (see our discussion in the section Introduction and also in \cite{HalTeb:23}) and they are important for our convergence analysis.
\begin{lemma}\label{bbound} Consider  Algorithm \ref{alg1} and let $\{P_{k}\}_{k\geq1}$ as defined in  \eqref{lyapunov_function}. If Assumptions \ref{assump1}, \ref{assump2} and \ref{assump3} hold with $\mathcal{S}=\mathcal{S}_{\bar{P}}^0$ and $\bar{P}$ defined in \eqref{alpha_hat} for any fixed constant $c_0$ and $D_{\bar{P}}$  the diameter of $\mathcal{S}_{\bar{P}}^0$. If $\rho$ is chosen as in \eqref{rho_def} and $x_0$ is chosen to satisfy \eqref{eq4}, then  for any $k\geq1$  the following  holds:
\begin{subequations}\label{important}
  \begin{gather}
    \beta_{k} \leq \bar{\beta}, \label{bound_beta}\\
    x_{k}\in\mathcal{S}^0_{\bar{P}} \label{bound_x},\\
     \|\lambda_{k}\|^2\leq  2 \bar{\gamma}(\rho-\rho_0) \label{lambda},\\
    P_{k}\leq \bar{P} - 1, \label{bound_of_P}\\
        P_{k+1}-P_{k}\leq-\frac{\beta_{k+1}}{8}\|\Delta x_{k+1}\|^2-\frac{\beta_{k}}{8}\|\Delta x_{k}\|^2. \label{decrease_Lyapunov} 
  \end{gather}
\end{subequations}
\end{lemma}
\begin{proof}
See Appendix.
\end{proof}

\noindent Next, we show that the dual iterates are bounded and that the Lyapunov sequence $\{P_{k}\}_{k\geq1}$ is bounded from below.
\begin{lemma}\label{bounded_below} Consider  Algorithm \ref{alg1} and let $\{P_{k}\}_{k\geq1}$  defined in  \eqref{lyapunov_function}. If Assumptions \ref{assump1}, \ref{assump2} and \ref{assump3}  hold with $\mathcal{S}=\mathcal{S}_{\bar{P}}^0$ and $\bar{P}$ defined in \eqref{alpha_hat} for any fixed constant $c_0$ and $D_{\bar{P}}$  the radius of $\mathcal{S}^0_{\bar{P}}$. If $\rho$ is chosen as in \eqref{rho_def} and $x_0$ is chosen to satisfy \eqref{eq4}, then  for any $k\geq1$  the following  holds:
\begin{equation}
P_k\geq \ubar{P}-1, \label{bound_from_below}
\end{equation}
where $\ubar{P}$ is  defined in \eqref{lem1}.
\end{lemma}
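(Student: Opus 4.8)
The plan is to discard the nonnegative quadratic term in the Lyapunov function and then lower bound the augmented Lagrangian itself. Since $\gamma_k>0$, from \eqref{P}--\eqref{lyapunov_function} we immediately have $P_k\geq\mathcal{L}_\rho(x_k,\lambda_k)=f(x_k)+\langle\lambda_k,F(x_k)\rangle+\frac{\rho}{2}\|F(x_k)\|^2$, so it suffices to show $\mathcal{L}_\rho(x_k,\lambda_k)\geq\bar{L}-1$.

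First I would use the definition of $\bar{L}$ in \eqref{lem1}, which gives $f(x_k)+\frac{\rho_0}{2}\|F(x_k)\|^2\geq\bar{L}$, i.e. $f(x_k)\geq\bar{L}-\frac{\rho_0}{2}\|F(x_k)\|^2$. Substituting this into $\mathcal{L}_\rho(x_k,\lambda_k)$ yields
\[
\mathcal{L}_\rho(x_k,\lambda_k)\geq\bar{L}+\langle\lambda_k,F(x_k)\rangle+\frac{\rho-\rho_0}{2}\|F(x_k)\|^2.
\]
Then I would apply \eqref{inequality_prop} in the form $\langle\lambda_k,F(x_k)\rangle\geq-\frac{1}{2(\rho-\rho_0)}\|\lambda_k\|^2-\frac{\rho-\rho_0}{2}\|F(x_k)\|^2$, which is legitimate because the choice of $\rho$ in \eqref{rho_def} forces $\rho>\rho_0$ (indeed $\rho\geq3\rho_0$ and $\rho\geq1$, so $\rho-\rho_0>0$ whether or not $\rho_0=0$). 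The two terms $\pm\frac{\rho-\rho_0}{2}\|F(x_k)\|^2$ cancel, leaving $\mathcal{L}_\rho(x_k,\lambda_k)\geq\bar{L}-\frac{1}{2(\rho-\rho_0)}\|\lambda_k\|^2$.

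Finally I would invoke the bound \eqref{lambda} from Lemma \ref{bbound}, namely $\|\lambda_k\|^2\leq2(\rho-\rho_0)$ for all $k\geq1$, to obtain $\frac{1}{2(\rho-\rho_0)}\|\lambda_k\|^2\leq1$, and hence $P_k\geq\mathcal{L}_\rho(x_k,\lambda_k)\geq\bar{L}-1$ for all $k\geq1$, as claimed. There is no real obstacle here: the whole argument is a two-line Young's inequality estimate, and all the work has already been done in establishing the dual bound \eqref{lambda} in Lemma \ref{bbound}; the only point requiring a word of care is checking that $\rho-\rho_0>0$ so that the splitting parameter in \eqref{inequality_prop} is admissible.
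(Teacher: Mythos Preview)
Your proof is correct and essentially identical to the paper's own argument: both drop the nonnegative proximal term, apply Young's inequality \eqref{inequality_prop} with parameter $\rho-\rho_0$ to the inner product $\langle\lambda_k,F(x_k)\rangle$, invoke the dual bound \eqref{lambda} from Lemma~\ref{bbound}, and use the definition of $\bar{L}$ from \eqref{lem1}. The only cosmetic difference is the order in which you apply \eqref{lem1} and \eqref{inequality_prop}.
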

\begin{proof}
See Appendix.
\end{proof}

\noindent Note that since $\beta_k$ is bounded (see Lemma \ref{bbound}), it follows that $\Gamma_k$ and $c_k$  in Lemma \ref{bounded_gradient} are also bounded. Hence,  in the sequel we denote these bounds as:
\begin{equation}\label{boundgam} 
    \bar{\Gamma}:=\sup_{k\geq1}\{\Gamma_k\} \quad \text{ and } \quad \bar{c}:=\sup_{k\geq1}\{c_{k}\}. 
\end{equation}

\noindent Let us  also bound the full gradient $\nabla P(\cdot)$ (recall that  $P(\cdot)$ is the function defined in \eqref{P}).
\begin{lemma}\label{bounded_grad}[Boundedness of $\nabla P$] Let $\{(x_{k},\lambda_{k})\}_{k\geq1}$ be the sequence generated by  Algorithm \ref{alg1}. If Assumptions \ref{assump1}, \ref{assump2} and \ref{assump3} hold with $\mathcal{S}=\mathcal{S}_{\bar{P}}^0$ and $\bar{P}$ defined in \eqref{alpha_hat} for any fixed constant $c_0$,  $D_{\bar{P}}$ is the diameter of $\mathcal{S}_{\bar{P}}^0$  and $\rho$ is chosen as in \eqref{rho_def}, then we have for any $k\geq1$:
\[
\|\nabla{P}(x_{k+1},\lambda_{k+1},x_{k},\frac{\beta_{k+1}}{2})\|\leq(\bar{\Gamma}+(\bar{c}+1)D_{\bar{P}}+\bar{\beta})\left(\|\Delta x_{k+1}\|+\|\Delta x_{k}\|\right),
\]
where, $\bar{\Gamma}, \bar{c}$ are  defined in \eqref{boundgam} and $\bar{\beta}$ is defined in \eqref{beta_bar_def}.
\end{lemma}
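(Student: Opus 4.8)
The plan is to differentiate the Lyapunov function $P$ of \eqref{P} with respect to all of its arguments and then bound each resulting block separately using the estimates already established. Since $P(x,\lambda,y,\gamma)=\mathcal{L}_{\rho}(x,\lambda)+\frac{\gamma}{2}\|x-y\|^2$, its partial derivatives are
\[
\nabla_x P=\nabla_x\mathcal{L}_{\rho}(x,\lambda)+\gamma(x-y),\qquad \nabla_\lambda P=F(x),
\]
\[
\nabla_y P=-\gamma(x-y),\qquad \partial_\gamma P=\tfrac12\|x-y\|^2 .
\]
Evaluating at $(x_{k+1},\lambda_{k+1},x_{k},\gamma_{k+1})$ and abbreviating $v_{k+1}=\gamma_{k+1}(x_{k+1}-x_{k})$, I observe that the $x$- and $\lambda$-blocks of $\nabla P$ are precisely $\nabla\mathcal{L}_{\rho}(x_{k+1},\lambda_{k+1})$ with an extra $v_{k+1}$ added to the $x$-part, while the $y$-block equals $-v_{k+1}$ and the $\gamma$-block is the scalar $\tfrac12\|x_{k+1}-x_{k}\|^2$. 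Splitting the stacked vector as a sum of these three contributions and applying the triangle inequality gives
\[
\|\nabla P(x_{k+1},\lambda_{k+1},x_{k},\gamma_{k+1})\|\le \|\nabla\mathcal{L}_{\rho}(x_{k+1},\lambda_{k+1})\|+\sqrt{2}\,\gamma_{k+1}\|x_{k+1}-x_{k}\|+\tfrac12\|x_{k+1}-x_{k}\|^2 .
\]

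Next I would bound the three terms on the right. For the first, Lemma~\ref{bounded_gradient} together with the definition \eqref{boundgam} of $\Gamma_{\text{max}}$ yields $\|\nabla\mathcal{L}_{\rho}(x_{k+1},\lambda_{k+1})\|\le \Gamma_{\text{max}}\big(\|x_{k+1}-x_{k}\|+\|x_{k}-x_{k-1}\|\big)$. For the second, $\gamma_{k+1}\le\bar\gamma$ by \eqref{bar_gamma}. For the third, I invoke Lemma~\ref{bbound}: since $x_{k+1},x_{k}\in\mathcal{S}^0_{\hat\alpha}$ and $D_S$ is the diameter of $\mathcal{S}^0_{\hat\alpha}$, we have $\|x_{k+1}-x_{k}\|\le D_S$, hence $\tfrac12\|x_{k+1}-x_{k}\|^2\le \tfrac{D_S}{2}\|x_{k+1}-x_{k}\|$. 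Substituting these three bounds and coarsening the constants via $\sqrt2<2$ and $\tfrac12<1$ (and using $\|x_{k+1}-x_k\|\le\|x_{k+1}-x_k\|+\|x_k-x_{k-1}\|$ to collect the $v_{k+1}$- and $D_S$-terms) produces exactly the claimed inequality with constant $\Gamma_{\text{max}}+D_S+2\bar\gamma$.

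The argument is essentially bookkeeping, so I do not anticipate a genuine obstacle; the one point deserving care is the $\partial_\gamma P$ block. It is tempting to treat $\gamma$ as a mere parameter and omit it, in which case the constant would be $\Gamma_{\text{max}}+\sqrt2\,\bar\gamma$ with no $D_S$ at all; retaining it and absorbing the quadratic increment linearly is precisely what introduces the $D_S$ term, and this is the only place where the a priori boundedness of the iterates — hence the full set of hypotheses on $\rho$, $x_0$ and Assumptions~\ref{assump1}--\ref{strange_assum} routed through Lemma~\ref{bbound} — is actually needed.
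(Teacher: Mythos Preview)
Your proposal is correct and follows essentially the same route as the paper: compute the four partial gradients of $P$, split via the triangle inequality into $\|\nabla\mathcal{L}_\rho(x_{k+1},\lambda_{k+1})\|$, the $\gamma_{k+1}(x_{k+1}-x_k)$ terms, and the scalar $\tfrac12\|x_{k+1}-x_k\|^2$, then invoke Lemma~\ref{bounded_gradient}, $\gamma_{k+1}\le\bar\gamma$, and $\|x_{k+1}-x_k\|\le D_S$ to collapse everything into the stated constant. Your observation that the $\partial_\gamma P$ block is the sole source of the $D_S$ term (and hence of the need for Lemma~\ref{bbound}) is exactly right; the only cosmetic difference is that the paper bounds the $x$- and $y$-contributions of $\gamma_{k+1}(x_{k+1}-x_k)$ by $2\gamma_{k+1}\|x_{k+1}-x_k\|$ directly rather than first getting $\sqrt{2}$ and then coarsening.
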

\begin{proof}
See Appendix.
\end{proof}
\noindent The above lemma directly implies the following:
\begin{equation}\label{key_formul}
\|\nabla{P}(x_{k+1},\lambda_{k+1},x_{k},\frac{\beta_{k+1}}{2})\|^2\leq2(\bar{\Gamma}+(\bar{c}+1)D_{\bar{P}}+\bar{\beta})^2\left(\|\Delta x_{k+1}\|^2+\|\Delta x_{k}\|^2\right).
\end{equation}
Then, it follows from \eqref{key_formul} and \eqref{decrease_Lyapunov}, that:
 \begin{equation}\label{rate}
        P_{k+1}-P_{k}\leq-\frac{\ubar{\beta}}{16(\bar{\Gamma}+(\bar{c}+1)D_{\bar{P}}+\bar{\beta})^2}\left\|\nabla{P}(x_{k+1},\lambda_{k+1},x_{k},\gamma_{k+1})\right\|^2.
 \end{equation}
\noindent Let us denote $z_{k}=(x_{k},\lambda_{k})$ and  $u_{k}=(x_{k},\lambda_{k},x_{k-1},\frac{\beta_{k}}{2})$. Moreover,  $\texttt{Stat}P$ denotes the set of stationary points of the function $P(\cdot)$ defined in \eqref{P}. Furthermore, we denote $\mathcal{E}_{k}=P_{k}-P^{*}$, where $P^{*}=\lim_{k\to\infty}{P_{k}}$ (recall that  the sequence $\{P_k\}_{k\geq1}$ is decreasing and bounded from bellow according to \eqref{decrease_Lyapunov} and Lemma \ref{bounded_below}, respectively, hence it is convergent). Denote the set of limit points of $\{u_k\}_{k\geq1}$ by:
\[
\Omega:=\{u^{*}\;:\; \exists \text{ a convergent subsequence} \;  \{u_k\}_{k\in\mathcal{K}} \; \text{such that} \lim_{k\in\mathcal{K}}{u_k}=u^{*}\}.
\]
Let us now prove the following lemma.
\begin{lemma}\label{added_lemma}
Consider Algorithm \ref{alg1} and let $\{P_{k}\}_{k\geq1}$ be defined as in \eqref{P}. If Assumptions \ref{assump1}, \ref{assump2} and \ref{assump3} hold, with $\mathcal{S}=\mathcal{S}_{\bar{P}}^0$ and $\bar{P}$ defined in \eqref{alpha_hat} for any fixed constant $c_0$,  $D_{\bar{P}}$ is the radius of $\mathcal{S}_{\bar{P}}^0$  and $\rho$ is chosen as in \eqref{rho_def}, then the following statements hold:
\begin{enumerate}[(i)]
  \item $\Omega$  is a compact subset of \texttt{Stat}$P$ and   $ \lim_{k\to\infty}{\text{dist}(u_k,\Omega)}=0$.\label{lem_item1}
     \item For any $u\in\Omega,$ we have $P(u)=P^{*}$.\label{lem_item2}
  \item  For any $(x,\lambda, y, \gamma)\in\texttt{Stat}P,$ we have $(x,\lambda)$ a KKT point of \eqref{eq1}. \label{lem_item3}
\end{enumerate}
\end{lemma}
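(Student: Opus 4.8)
The plan is to prove the three items in order, using only the descent relation \eqref{lyap}, the summability estimate \eqref{limit} (hence \eqref{zero_limit}), the boundedness of the iterates established in Lemma \ref{bbound}, the gradient bound in Lemma \ref{bounded_grad}, and the fact that $P$ is continuously differentiable because $f,f_i\in\mathcal{C}^2$.

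For statement (i), I would first argue that $\{u_k\}_{k\geq1}$ is bounded: the primal iterates lie in the compact set $\mathcal{S}^0_{\hat\alpha}$, the multipliers satisfy the bound \eqref{lambda}, and $\gamma_k\in[\ubar\gamma,\bar\gamma]$ by \eqref{bar_gamma}. Hence $\Omega$ is nonempty; being the set of limit points of a bounded sequence it is closed and bounded, thus compact, and the convergence $\mathrm{dist}(u_k,\Omega)\to0$ follows from the usual subsequence argument (a subsequence staying at distance $\geq\epsilon$ from $\Omega$ would have a further convergent subsequence whose limit lies in $\Omega$). To get $\Omega\subseteq\text{crit }P$, I fix $u^*\in\Omega$ with $u_k\to u^*$ along some $\mathcal{K}$, apply Lemma \ref{bounded_grad} with the index shifted by one to obtain $\|\nabla P(u_k)\|\leq(\Gamma_{\text{max}}+D_S+2\bar\gamma)(\|\Delta x_k\|+\|\Delta x_{k-1}\|)$, let $k\to\infty$ in $\mathcal{K}$ using \eqref{zero_limit}, and invoke continuity of $\nabla P$ to conclude $\nabla P(u^*)=0$.

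For statement (ii), recall that by \eqref{lyap} and Lemma \ref{bounded_below} the sequence $\{P_k\}_{k\geq1}$ is nonincreasing and bounded below, hence convergent to $P^*$. Given $u^*\in\Omega$ with $u_k\to u^*$ along $\mathcal{K}$, continuity of $P$ gives $P(u^*)=\lim_{k\in\mathcal{K}}P(u_k)=\lim_{k\in\mathcal{K}}P_k=P^*$, where the second equality is just $P(u_k)=P_k$ and the third uses convergence of the whole sequence. For statement (iii), I would unpack $\nabla P(x,\lambda,y,\gamma)=0$ blockwise: $\nabla_\gamma P=\tfrac12\|x-y\|^2=0$ forces $x=y$, which renders $\nabla_y P=-\gamma(x-y)=0$ automatic; then $\nabla_x P=\nabla_x\mathcal{L}_\rho(x,\lambda)+\gamma(x-y)=\nabla_x\mathcal{L}_\rho(x,\lambda)=0$ and $\nabla_\lambda P=F(x)=0$. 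Substituting $F(x)=0$ into $\nabla_x\mathcal{L}_\rho(x,\lambda)=\nabla f(x)+\nabla F(x)^T(\lambda+\rho F(x))$ yields $\nabla f(x)+\nabla F(x)^T\lambda=0$, so $(x,\lambda)$ is a first-order solution of \eqref{eq1} in the sense of Definition \ref{firstorder}.

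The only point requiring mild care is the inclusion $\Omega\subseteq\text{crit }P$: since Lemma \ref{bounded_grad} controls $\|\nabla P(x_{k+1},\lambda_{k+1},x_k,\gamma_{k+1})\|$ rather than $\|\nabla P(u_k)\|$, and we have no a priori control on the convergence of $\gamma_{k+1}$ along a subsequence on which $\gamma_k$ converges, it is cleanest to apply the lemma at the shifted index $k-1$ so that the evaluation point is exactly $u_k$, for which $u_k\to u^*$ holds along $\mathcal{K}$ by definition of $\Omega$. Beyond this, the proof is routine continuity and compactness bookkeeping, following the standard Kurdyka--Łojasiewicz analysis template.
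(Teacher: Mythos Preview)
Your proposal is correct and follows essentially the same approach as the paper: boundedness of $\{u_k\}$ from Lemma \ref{bbound} and Assumption \ref{strange_assum} gives compactness of $\Omega$ and $\mathrm{dist}(u_k,\Omega)\to0$, Lemma \ref{bounded_grad} together with \eqref{zero_limit} and continuity of $\nabla P$ yields $\Omega\subseteq\text{crit }P$, continuity of $P$ and convergence of $\{P_k\}$ give (ii), and the blockwise computation of $\nabla P=0$ gives (iii). Your treatment is in fact slightly more careful than the paper's in two places --- the index shift needed to apply Lemma \ref{bounded_grad} at $u_k$, and the subsequence contradiction argument for $\mathrm{dist}(u_k,\Omega)\to0$ --- but the underlying ideas are identical.
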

\begin{proof}
    See Appendix.
\end{proof}


\subsection{Global asymptotic convergence}  
Based on the previous lemmas, we first  prove the global asymptotic convergence of the iterates of  Algorithm \ref{alg1}.

\begin{theorem}\label{unused_lemma}[Limit points are KKT points] If  Assumptions \ref{assump1}, \ref{assump2} and \ref{assump3}  hold with $\mathcal{S}=\mathcal{S}_{\bar{P}}^0$ and $\bar{P}$ defined in \eqref{alpha_hat} for any fixed constant $c_0$ and $D_{\bar{P}}$  the diameter of $\mathcal{S}_{\bar{P}}^0$. If $\rho$ is chosen as in \eqref{rho_def} and $x_0$ is chosen to satisfy \eqref{eq4}, then any limit point $(x^*,\lambda^*)$ of the sequence $\{(x_{k},\lambda_{k})\}_{k\geq1}$, generated by  Algorithm \ref{alg1}, is a stationary  point of the augmented Lagrangian function, i.e., $\nabla\mathcal{L}_{\rho}(x^*,\lambda^*)=0$. Equivalently, $(x^*,\lambda^*)$ is a KKT point of problem  \eqref{eq1}:
\[
\nabla f(x^*)+{{J_F}(x^*)}^T\lambda^*=0,\hspace{0.7cm}F(x^*)=0.
\]
\end{theorem}
\begin{proof}
From  \eqref{decrease_Lyapunov}, we have:
\begin{align*}
 \frac{\beta_{k+1}}{8}\|\Delta x_{k+1}\|^2+\frac{\beta_{k}}{8}\|\Delta x_{k}\|^2 \leq P_{k}-P_{k+1} \quad \forall k\geq1.
\end{align*}
Hence, for $k\geq1$, by summing up the above inequality from $i=1$ to $i=k$, we obtain:
\begin{align}
\sum_{i=1}^{k}{\left(\frac{\beta_{i+1}}{8}\|\Delta x_{i+1}\|^2+\frac{\beta_i}{8}\|\Delta x_i\|^2\right)}&\leq P_{1}-P_{k+1}{\overset{{\text{Lemma } \ref{bounded_below}}}{\leq}}P_1-(\ubar{P}-1)\nonumber\\
&{\overset{{\eqref{bound_of_P}}}{\leq}} \bar{P}-2 -\ubar{P}+1\leq \bar{P}-\ubar{P}.\label{limit}
\end{align}
Since \eqref{limit} holds for any $k\geq1$, we have:
\[
\sum_{i=1}^{\infty}{\left(\frac{\beta_{i+1}}{8}\|\Delta x_{i+1}\|^2+\frac{\beta_i}{8}\|\Delta x_i\|^2\right)}<\infty.
\]
This, together with the fact that  $\beta_k\geq\ubar{\beta}>0$, yields that:
\begin{equation}\label{zero_limit}
    \lim_{k\to\infty}{\|\Delta x_{k}\|}=0.
\end{equation}
From \eqref{lambda}, \eqref{bound_x} and the fact that $\mathcal{S}^0_{\bar{P}}$ is compact, it follows that the sequence $\{(x_{k},\lambda_{k})\}_{k\geq1}$ is bounded and  there exists a convergent subsequence, let us say  $\{(x_{k},\lambda_{k})\}_{k\in\mathcal{K}}$, with the limit $(x^*,\lambda^*)$.
From {Lemma \ref{bounded_gradient}} and \eqref{boundgam}, we have:
\begin{align*}
&\|\nabla\mathcal{L}_{\rho}(x^{*},\lambda^{*})\| = \lim_{k\in\mathcal{K}}{\|\nabla\mathcal{L}_{\rho}(x_{k},\lambda_{k})\|}\\
&\leq \bar{\Gamma}\lim_{k\in\mathcal{K}}\left(\|\Delta x_{k}\|+\|\Delta x_{k-1}\|\right)+\bar{c}\lim_{k\in\mathcal{K}}\left(\|\Delta x_k\|+\|\Delta x_{k-1}\|\right)\overset{\eqref{zero_limit}}{=}0.
\end{align*}
Therefore, $ \nabla\mathcal{L}_{\rho}(x^*,\lambda^*)=0$, which completes our proof.\qed
\end{proof}


\subsection{Convergence of the entire sequence under KL}
In this section, under the KL condition,   we prove that the whole sequence  $\{(x_{k},\lambda_{k})\}_{k\geq1}$ generated by   Algorithm \ref{alg1} converges, or, equivalently,   the sequence $\{{\|\Delta x_{k}\|+\|\Delta\lambda_{k}\|}\}_{k\geq1}$ has  finite length.
\begin{theorem}\label{finite_length}Let $\{(x_{k},\lambda_{k})\}_{k\geq1}$ be the sequence generated by Algorithm \ref{alg1}. Let Assumptions \ref{assump1}, \ref{assump2} and \ref{assump3} hold, with $\mathcal{S}=\mathcal{S}_{\bar{P}}^0$ and $\bar{P}$ defined in \eqref{alpha_hat} for any fixed constant $c_0$, and $D_{\bar{P}}$ is the radius of $\mathcal{S}_{\bar{P}}^0$. Moreover, assume that $ P(\cdot)$ defined in \eqref{P} satisfies the KŁ property on $\Omega$.  Then,  $\{z_{k}\}_{k\geq1}=\{(x_{k},\lambda_{k})\}_{k\geq1}$ satisfies the finite length property, i.e.,
\[
\sum_{k=1}^{\infty}{\|\Delta x_{k}\|+\|\Delta\lambda_{k}\|}<\infty,
\]
and consequently the whole sequence $\{(x_{k},\lambda_{k})\}_{k\geq1}$ converges to a KKT point of problem \eqref{alg1}.
\end{theorem}
\begin{proof}
From  the boundedness of $\|\Delta \lambda_{k+1}\|^2$ derived in  \eqref{lambda_squared}, we have:
 \begin{align}\label{llambda}
    \|\Delta\lambda_{k+1}\|^2&\leq c(\bar{\beta})\left(\|\Delta x_{k+1}\|^2+\|\Delta x_{k}\|^2\right)\nonumber\\
    &\leq c(\bar{\beta})\left(\|\Delta x_{k+1}\|^2+\|\Delta x_{k}\|^2\right).
 \end{align}
Adding the term $\|\Delta x_{k+1}\|^2+\|\Delta x_{k}\|^2$ on both sides in \eqref{llambda}, we have:
\begin{align}\label{z_k}
 \|z_{k+1}-z_{k}\|^2&=\|\Delta x_{k+1}\|^2+\|\Delta\lambda_{k+1}\|^2\nonumber\\
 &\leq\|\Delta x_{k+1}\|^2+\|\Delta\lambda_{k+1}\|^2+\|\Delta x_{k}\|^2\nonumber\\
 &{\overset{{\eqref{llambda}}}{\leq}}\big(c(\bar{\beta})+1\big)\left(\|\Delta x_{k+1}\|^2+\|\Delta x_{k}\|^2\right).
 \end{align}
We can then rewrite \eqref{decrease_Lyapunov} as follows: 
\begin{align}\label{llyap}
     P_{k+1}-P_{k}&{\overset{{\eqref{decrease_Lyapunov}}}{\leq}}-\frac{\ubar{\beta}}{8}\left(\|\Delta x_{k+1}\|^2+\|\Delta x_{k}\|^2\right)\nonumber\\
    &{\overset{{\eqref{z_k}}}{\leq}}-\frac{\ubar{\beta}}{8\big(c(\bar{\beta})+1\big)}\|z_{k+1}-z_{k}\|^2.
\end{align}
 Since $ P_{k}\to P^{*}$ and  $\{P_{k}\}_{k\geq 1}$ is monotonically decreasing to $P^{*}$,  it follows that the error sequence $\{\mathcal{E}_{k}\}_{k\geq 1}$, is non-negative, monotonically decreasing and converges to $0$. We distinguish  two cases.\\
\noindent \textbf{{Case 1}}: There exists  $k_1\geq 1$ such that $\mathcal{E}_{k_1}=0$. Then, $\mathcal{E}_{k}=0 \;  \forall k\geq k_1$ and using \eqref{llyap}, we have:
\[
\|z_{k+1}-z_{k}\|^2\leq\frac{8\big(c(\bar{\beta})+1\big)}{\ubar{\beta}}(\mathcal{E}_{k}-\mathcal{E}_{k+1})=0 \hspace{0.3cm}\forall k\geq k_1.
\]
From  Lemmas \ref{bbound} and \ref{bounded_below} the sequence $\{z_{k}\}_{k\geq1}$ is bounded, and thus:
 \[
 \sum_{k=1}^{\infty}{\|\Delta x_{k}\|+\|\Delta\lambda_{k}\|}=\sum_{k=1}^{k_1} \|\Delta x_{k}\|+\|\Delta\lambda_{k}\| < \infty.
 \]
 \noindent \textbf{{Case 2}}: The error $\mathcal{E}_{k}>0 \;  \forall k\geq 1$. Then,  there exists  $k_1=k_1(\epsilon,\tau)\geq 1$  such that $\forall k\geq k_1$ we have $\text{dist}(u_k,\Omega)\leq \epsilon$,  $P^{*}<P(u_k)<P^{*}+\tau$
 and
 \begin{equation}\label{KL}
     \varphi'(\mathcal{E}_{k})\|\nabla P(x_{k},\lambda_{k},x_{k-1},\frac{\beta_{k}}{2})\|\geq1,
 \end{equation}
where $ \epsilon>0, \tau>0$ and $\varphi\in\Psi_{\tau}$ are well defined and correspond to those in Definition \ref{def2} (recall that we assume that   $P(\cdot)$ satisfies the KL property on $\Omega$). Since $\varphi$ is concave, we have $\varphi(\mathcal{E}_{k})-\varphi(\mathcal{E}_{k+1})\geq\varphi'(\mathcal{E}_{k})(\mathcal{E}_{k}-\mathcal{E}_{k+1})$. Then, from \eqref{llyap} and \eqref{KL} we get:
 \begin{align*}
 &\|z_{k+1}-z_{k}\|^2\leq\varphi'(\mathcal{E}_{k})\|z_{k+1}-z_{k}\|^2\|\nabla P(x_{k},\lambda_{k},x_{k-1},\frac{\beta_{k}}{2})\|\nonumber\\
 &\leq\frac{8\big(c(\bar{\beta})+1\big)}{\ubar{\beta}}\varphi'(\mathcal{E}_{k})(\mathcal{E}_{k}-\mathcal{E}_{k+1})\|\nabla P(x_{k},\lambda_{k},x_{k-1},\frac{\beta_{k}}{2})\|\nonumber\\
 &\leq\frac{8\big(c(\bar{\beta})+1\big)}{\ubar{\beta}}\Big(\varphi(\mathcal{E}_{k})-\varphi(\mathcal{E}_{k+1})\Big)\|\nabla P(x_{k},\lambda_{k},x_{k-1},\frac{\beta_{k}}{2})\|.
 \end{align*}
Since $ \|z_{k+1}-z_{k}\|^2={\|\Delta x_{k+1}\|^2+\|\Delta\lambda_{k+1}\|^2}$. Using the fact that for any $a,b,c,d\geq0$, if $ {a^2+b^2}\leq c\times d$, then $ (a+b)^2\leq 2a^2+2b^2\leq 2c\times d\leq c^2+d^2\leq (c+d)^2$, it follows that for any $\theta>0$, we have:
\begin{align}\label{lmit}
    {\|\Delta x_{k+1}\|+\|\Delta\lambda_{k+1}\|}\leq&\frac{8\big(c(\bar{\beta})+1\big)\theta}{\ubar{\beta}}\Big(\varphi(\mathcal{E}_{k})-\varphi(\mathcal{E}_{k+1})\Big)\nonumber\\
    &+\frac{1}{\theta}\|\nabla P(x_{k},\lambda_{k},x_{k-1},\frac{\beta_{k}}{2})\|.
\end{align}
Furthermore, we have:
\begin{align*}
    \| \nabla{P}(x_{k},\lambda_{k},x_{k-1},\frac{\beta_{k}}{2})\|&\leq\|\nabla{\mathcal{L}_{\rho}}(x_{k},\lambda_{k})\|+  
     \bar{\beta}\|x_{k}-x_{k-1}\|\\
    &{\overset{{\eqref{needed1_lemma8}, \eqref{needed2_lemma8}}}{\leq}}(\bar{\Gamma}+(\bar{c}+1)D_{\bar{P}}+\bar{\beta})\left(\|\Delta x_{k}\|+\|\Delta \lambda_{k}\|\right).
\end{align*}
Then, \eqref{lmit} becomes:
\begin{align*}
    {\|\Delta x_{k+1}\|+\|\Delta\lambda_{k+1}\|}\leq&\frac{8\big(c(\bar{\beta})+1\big)\theta}{\ubar{\beta}}\Big(\varphi(\mathcal{E}_{k})-\varphi(\mathcal{E}_{k+1})\Big)\nonumber\\
&+\frac{\bar{\Gamma}+(\bar{c}+1)D_{\bar{P}}+\bar{\beta}}{\theta}\Big(\|\Delta x_{k}\|+\|\Delta\lambda_{k}\|\Big).
\end{align*}
Let us now choose $\theta>0$ so that $0<\frac{\bar{\Gamma}+(\bar{c}+1)D_{\bar{P}}+\bar{\beta}}{\theta}<1$ and define the parameter $\delta_0$ as: $\delta_0=1-\frac{\bar{\Gamma}+(\bar{c}+1)D_{\bar{P}}+\bar{\beta}}{\theta}>0$. Then, by
summing up the above inequality from $k=k_1$ to $k=K$ and using the property: $\sum_{k=k_1}^{K}{\|\Delta x_{k}\|}=\sum_{k=k_1}^{K}{\|\Delta x_{k+1}\|}+\|\Delta x_{k_1}\|-\|\Delta x_{{K+1}}\|$, we get: 
\begin{align*}
   \sum_{k=k_1}^{K}&\|\Delta x_{k+1}\|  +\|\Delta\lambda_{k+1}\|\leq \frac{8\big(c(\bar{\beta})+1\big)\theta}{\ubar{\beta}\delta_0}\Big(\varphi(\mathcal{E}_{k_1})-\varphi(\mathcal{E}_{K+1})\Big)\nonumber\\
&+\frac{\bar{\Gamma}+(\bar{c}+1)D_{\bar{P}}+\bar{\beta}}{\theta\delta_0}\Big(\|\Delta x_{k_1}\|+\|\Delta\lambda_{k_1}\|\Big) \\
& -\frac{\bar{\Gamma}+(\bar{c}+1)D_{\bar{P}}+\bar{\beta}}{\theta\delta_0}\Big(\|\Delta x_{{K+1}}\|+\|\Delta\lambda_{{K+1}}\|\Big).
\end{align*}
Using the fact that  $\{\mathcal{E}_{k}\}_{k\geq k_1}$ is monotonically decreasing and that the function $\varphi$ is positive and increasing, which  yields $\varphi(\mathcal{E}_{k})\geq\varphi(\mathcal{E}_{k+1})>0$,  we get:
\begin{align*}
    \sum_{k=k_1}^{K}{\|\Delta x_{k+1}\|+\|\Delta\lambda_{k+1}\|}\leq&\frac{8\big(c(\bar{\beta})+1\big)\theta}{\ubar{\beta}\delta_0}\varphi(\mathcal{E}_{\ubar{k}})\nonumber\\
&+\frac{\bar{\Gamma}+(\bar{c}+1)D_{\bar{P}}+\bar{\beta}}{\theta\delta_0}\Big(\|\Delta x_{k_1}\|+\|\Delta\lambda_{k_1}\|\Big).
\end{align*}
It is clear that the right-hand side of the above inequality is bounded for any $K\geq k_1$. Letting  $K\to\infty$, we get that:
\[
    \sum_{k=k_1}^{\infty}{\|\Delta x_{k+1}\|+\|\Delta\lambda_{k+1}\|}<\infty.
\]
From Lemma \ref{bbound}, the sequence  $\{(x_{k},\lambda_{k})\}_{k\geq1}$ is bounded. Then, it follows that:
\[
    \sum_{k=1}^{\ubar{k}}{\|\Delta x_{k}\|+\|\Delta\lambda_{k}\|}<\infty.
\]
Hence: $ \sum_{k=1}^{\infty}{\|\Delta x_{k}\|+\|\Delta\lambda_{k}\|}<\infty$. 
Let $m, n\in \mathbf{Z}_{+}$ such that $n\geq m$, we have:
\begin{align*}
    \|z_n-z_m\|=\|\sum_{k=m}^{n-1}{\Delta z_{k+1}}\|
    \leq\sum_{k=m}^{n-1}{\|\Delta z_{k+1}\|}
\leq\sum_{k=m}^{n-1}{\|\Delta x_{k+1}\|+\|\Delta\lambda_{k+1}\|}.
\end{align*}
Since  $ \sum_{k=1}^{\infty}{\|\Delta x_{k+1}\|+\|\Delta\lambda_{k+1}\|}<\infty$, it follows that $\forall \varepsilon>0, \exists N\in\mathbf{Z}_{+}$ such that $\forall m, n \geq N$, where $n\geq m$, we have: $ \|z_n-z_m\|\leq\varepsilon$. This implies that $\{z_k\}_{k\geq1}$ is a Cauchy sequence and thus converges. Moreover, by Theorem \ref{unused_lemma}, the whole sequence $\{z_k\}_{k\geq1} = \{(x_{k},\lambda_{k})\}_{k\geq1}$  converges to a KKT point of problem \eqref{eq1}. This concludes our proof. \qed
\end{proof}
\begin{remark}
Theorem \ref{finite_length} shows that, under the KL property, the entire sequence $\{(x_k, \lambda_k)\}_{k \geq 1}$ generated by Algorithm~\ref{alg1} converges. Consequently, the set of its limit points reduces to a singleton, which, according to Theorem \ref{unused_lemma} is a KKT point of problem \eqref{eq1}. Convergence rates can be easily derived for some particular choices of the desingularization function $\varphi$ as e.g., in  \cite{ElbNecPan:25}. 
\end{remark}


\subsection{First-order complexity}

\noindent Let us now present another important result of this paper, which derives  the computational  complexity of  Algorithm \ref{alg1} to find an $\epsilon$-first-order solution of problem~\eqref{eq1}.

\begin{theorem}\label{main_result}[First-order complexity]
Consider Algorithm \ref{alg1} and let $\{P_{k}\}_{k\geq1}$ be defined as in \eqref{lyapunov_function}. If Assumptions \ref{assump1}, \ref{assump2} and \ref{assump3} hold with $\mathcal{S}=\mathcal{S}_{\bar{P}}^0$ and $\bar{P}$ defined in \eqref{alpha_hat}, $\rho$ is chosen as in \eqref{rho_def} and $\ubar{\beta}$ is such that $\ubar{\beta}=\mathcal{O}\left(\sqrt{\rho}\right)$, then for any $\epsilon>0$, Algorithm \ref{alg1} yields an $\epsilon$-first-order solution of problem \eqref{eq1} after $K=\mathcal{O}\left(\frac{\sqrt{\rho}}{\epsilon^2}\right)$ Jacobian evaluations.
\end{theorem}

\begin{proof}
According to Theorem \ref{unused_lemma}, we have  $\lim_{k\in\mathcal{K}}{\|\nabla\mathcal{L}_{\rho}(x_{k},\lambda_{k})\|} =0$. Let  $k^*\geq 1$ be the first integer such that:
\begin{equation}
\label{eq:24}
    \|\nabla\mathcal{L}_{\rho}(x_{k^*+1}, \lambda_{k^*+1})\|\leq \epsilon.
\end{equation}
From Lemma \ref{bounded_gradient}, it follows that at each iteration $k \in [1:k^*]$ we encounter one of the following two cases:\\
\noindent \textit{Case 1: }  $$\Gamma_{k+1} \|\Delta x_{k+1}\| + \Gamma_{k} \|\Delta x_{k}\|\geq c_{k+1} \|\Delta x_{k+1}\|^2 + c_{k} \|\Delta x_{k}\|^2,$$ then we get:
\begin{align} \label{case1}
& \|\nabla\mathcal{L}_{\rho}(x_{k+1},\lambda_{k+1})\|^2 \leq 8 \Gamma_{k+1}^2 \|\Delta x_{k+1}\|^2 + 8 \Gamma_k^2 \|\Delta x_k\|^2 \nonumber\\
&\overset{\eqref{decrease_Lyapunov}}{\leq} \left(\frac{64\Gamma_{k+1}^2}{\beta_{k+1}} + \frac{64\Gamma_k^2}{\beta_k}\right)\left(P_k - P_{k+1}\right)  \overset{\eqref{boundgam}}{\leq} \frac{128\bar{\Gamma}^2}{\ubar{\beta}}\left(P_k - P_{k+1}\right).
\end{align}
\noindent \textit{Case 2: } Otherwise,  the following is valid: 
$$\Gamma_{k+1} \|\Delta x_{k+1}\| + \Gamma_{k} \|\Delta x_{k}\| < c_{k+1} \|\Delta x_{k+1}\|^2 + c_{k} \|\Delta x_{k}\|^2,$$ 
which yields:
\begin{align}\label{case2}
&\|\nabla\mathcal{L}_{\rho}(x_{k+1},\lambda_{k+1})\|\leq 2 c_{k+1} \|\Delta x_{k+1}\|^2 + 2 c_k \|\Delta x_k\|^2 \nonumber\\
&\overset{\eqref{decrease_Lyapunov}}{\leq} \left(\frac{16c_{k+1}}{\beta_{k+1}} + \frac{16 c_k}{\beta_k}\right)\left(P_k - P_{k+1}\right) \overset{\eqref{boundgam}}{\leq} \frac{32\bar{c}}{\ubar{\beta}} \left(P_k - P_{k+1}\right).
\end{align}
Define $\mathcal{I}_1$ as the set of iterations in $ [1:k^*-1]$ at which   \textit{Case 1} holds, and  $\mathcal{I}_2$ as the set of iterations in $[1:k^*-1]$ at which   \textit{Case 2} holds. Clearly: $k^*=|\mathcal{I}_1|+|\mathcal{I}_2|+1$. We first derive an upper bound for $|\mathcal{I}_1|$. Summing \eqref{case1} over $\mathcal{I}_1$ yields:
\begin{align*}
 |\mathcal{I}_1|\epsilon^2  & \overset{\eqref{eq:24}}{<} \sum_{k\in\mathcal{I}_1} \|\nabla\mathcal{L}_{\rho}(x_{k+1},\lambda_{k+1})\|^2  \overset{\eqref{case1}}{\leq}  \sum_{k\in\mathcal{I}_1} \frac{128 \bar{\Gamma}^2}{\ubar{\beta}}\left(P_k - P_{k+1}\right)\\
   & \leq \frac{128 \bar{\Gamma}^2}{\ubar{\beta}} \sum_{k=1}^{k^*\!-1} \left(P_k - P_{k+1}\right) = \frac{128 \bar{\Gamma}^2}{\ubar{\beta}}  \left(P_1 - P_{k^*}\right)   {\overset{\eqref{bound_of_P}, \eqref{bound_from_below}}{\leq}}\frac{128 \bar{\Gamma}^2}{\ubar{\beta}} \left(\bar{P}-\ubar{P}\right).
\end{align*}
Thus, we have: $|\mathcal{I}_1|< \frac{128 \bar{\Gamma}^2\left(\bar{P}-\ubar{P}\right)}{\ubar{\beta}\epsilon^2} $. Similarly, we derive an upper bound for $|\mathcal{I}_2|$. Summing  \eqref{case2} over $\mathcal{I}_2$ yields:
\begin{align*}
 |\mathcal{I}_2|\epsilon  & \overset{\eqref{eq:24}}{<} \sum_{k\in\mathcal{I}_2} \|\nabla\mathcal{L}_{\rho}(x_{k+1},\lambda_{k+1})\|  \overset{\eqref{case2}}{\leq} \sum_{k\in\mathcal{I}_2} \frac{32 \bar{c}}{\ubar{\beta}}\left(P_k - P_{k+1}\right)\\
   & \leq \frac{32 \bar{c}}{\ubar{\beta}} \sum_{k=1}^{k^*\!-1} \left(P_k - P_{k+1}\right) = \frac{32 \bar{c}}{\ubar{\beta}}  \left(P_1 - P_{k^*}\right)   {\overset{\eqref{bound_of_P}, \eqref{bound_from_below}}{\leq}}\frac{32 \bar{c}}{\ubar{\beta}} \left(\bar{P}-\ubar{P}\right).
\end{align*}
Therefore, we obtain: $|\mathcal{I}_2|< \frac{32 \bar{c}\left(\bar{P}-\ubar{P}\right)}{\ubar{\beta}\epsilon} $.  Consequently, we have:
\[
k^* \leq \left(\bar{P}-\ubar{P}\right) \left({\frac{128\bar{\Gamma}^2}{\ubar{\beta}\epsilon^2}} + \frac{32\bar{c}}{\ubar{\beta}\epsilon} \right).
\]
Note that $\bar{\Gamma} = \mathcal{O}\left(\sqrt{\rho}\right)$ and $\bar{c} = \mathcal{O}\left(\rho\right)$ (see Lemma \ref{bounded_gradient}, \eqref{boundgam} and definition of $\bar{\beta}$ in \eqref{beta_bar_def}). Hence, assuming $\epsilon\leq1$, we get:
\[
k^* \leq \mathcal{O}\left( \frac{\rho}{\ubar{\beta}}\frac{1}{\epsilon^2} + \frac{\rho}{\ubar{\beta}}\frac{1}{\epsilon}\right).
\]
Consequently, if we fix $\ubar{\beta}=\mathcal{O}({\sqrt{\rho}})$, then after $K=\mathcal{O}\left(\frac{\sqrt{\rho}}{\epsilon^2}\right)$ Jacobian evaluations,  Algorithm \ref{alg1} yields  an $\epsilon$-first-order solution of optimization problem \eqref{eq1}. This concludes our proof. \qed
\end{proof}

\noindent From the previous theorems, one can see that, in addition to its straightforward implementation,   Algorithm \ref{alg1} also enjoys global convergence results, giving it an advantage over approaches where only local convergence can be guaranteed, such as SCP schemes \cite{MesBau:21}. Moreover, our method guarantees global  convergence to an $\epsilon$-first-order solution in at most $\mathcal{O}(\sqrt{\rho} \epsilon^{-2})$ Jacobian evaluations, which, to the best of our knowledge, \textit{is the optimal complexity in the context of augmented Lagrangian and penalty-based methods for smooth nonconvex constrained optimization problems}, as the penalty parameter $\rho$ enters  under the square root and the desired accuracy $\epsilon$ enters quadratically  in the algorithm's complexity \cite{ElbNec:25,LiuLin:25}. Our convergence rate greatly \textit{improves the existing complexity results} for augmented Lagrangian type methods, measured through the Jacobian evaluations, on the same class of problems: e.g., $\mathcal{O}(\epsilon^{-5.5})$ in \cite{XieWri:21};  $\mathcal{O}(\epsilon^{-4})$ in \cite{SahEft:19}; or  $\mathcal{O}(\epsilon^{-3})$ recently derived in \cite{ElbNecPan:25}.  Another key advantage lies in its avoidance of calling complicated subroutines, as the \textit{unconstrained}  subproblem in L-AL algorithm has a \textit{quadratic} strongly convex objective function, making it  remarkably efficient compared to e.g., \cite{XieWri:21, AndBir:08, SahEft:19}, where the subproblem is highly nonconvex.   Hence, its simplicity and effectiveness make it an attractive algorithm for a wide range of large-scale practical applications.


\subsection{Special case of $F$ affine function}

In this section, we analyze the affine case, i.e.,  $F(x) = Ax - b$, with $A \in \mathbb{R}^{m \times n}$ having full row rank, equivalently,  $\sigma_{\min}(A) = \sigma>0$. We study how this assumption affects the complexity of our algorithm for solving problem \eqref{eq1} and compare it with the lower bounds for smooth nonconvex problems with affine equality constraints derived in \cite{LiuLin:25} for primal first-order methods. Note that since $F$ is affine, the smoothness constant $L_F = 0$. Below we briefly present  the corresponding  results  for Lemmas \ref{lambda_bound}, \ref{lemma3}, and \ref{bounded_gradient}, along with the updated choice of $\rho$ in \eqref{rho_def}, and explain how the complexity bound is modified in this case. The proofs of these results follow similar reasoning as for their counterparts in the general case, with the simplification that $L_F = 0$ and $F(x_{k+1}) = l_F(x_{k+1}, x_k)$. Therefore, we omit their proofs. We begin bounding $\|\Delta \lambda_{k+1}\|$ (see Lemma \ref{lambda_bound}):
\begin{align}
    \label{lambda_squared_linear}
    \|\Delta\lambda_{k+1}\|^2 \leq c(\beta_{k+1}) \|\Delta x_{k+1}\|^2 + c(\beta_k) \|\Delta x_k\|^2,
\end{align}
where now \; $c(\beta) = \frac{2 (L_f +\beta)^2}{\sigma^2}$.  Next, we provide the counterpart of Lemma \ref{lemma3}: if $\beta_{k+1}$ is chosen such that 
\begin{equation} \label{eq_assu_linear}
    \beta_{k+1} \geq L_f,
\end{equation}
then inequality \eqref{decrease_algorithm} holds. We also state the corresponding result of Lemma \ref{bounded_gradient}:
\[
    \|\nabla \mathcal{L}_{\rho}(x_{k+1}, \lambda_{k+1})\| \!\leq\! \Gamma_{k+1} \|\Delta x_{k+1}\| + \Gamma_k \|\Delta x_k\|, \text{where} \; 
    \Gamma_k \!=\! \frac{M_F \!+\! \frac{1}{\rho}}{\sigma} \! \left(L_f + 2 \beta_k\right).
\]

\noindent Hence, the corresponding choice of $\rho$ for the bound in \eqref{rho_def} in the case when $F$ is affine becomes:
\begin{align}\label{rho_def_linear}
 \rho \geq \!\max\Bigg\{1,&\ \rho_0\!+\!1,\ \frac{1}{M_F},\ \rho_0 \!+\! \frac{12 M_f^2}{\sigma^2},\ 2\rho_0\! +\!\frac{48\mu L_f}{\sigma^2},\ \frac{48(1\!+\!\mu)^2L_f }{\mu \sigma^2},\nonumber\\
&\rho_0+\frac{2M_f^2  + 4 \mu^2L_f^2D_{\bar{P}}^2}{\sigma^2}\Bigg\}.
\end{align}
With the above choice, Lemmas \ref{bbound} and \ref{bounded_below} follow with the same constants as before, but now using that $L_F = 0$. Hence, $\bar{P}$, $\ubar{P}$ and the  diameter $D_{\bar{P}}$, remain unchanged, and we have $\bar{\beta} = \mu L_f$ and $\bar{\gamma} = 1$. We are now ready to provide the complexity bound of our Algorithm \ref{alg1} in the case when $F$ is affine (we denote  $\kappa_A = \frac{M_F}{\sigma}$  the condition number of the matrix $A$).

\begin{corrolary}
    \label{main_result_linear}[First-order complexity: affine $F$]
Consider Algorithm \ref{alg1}, and let $\{P_k\}_{k\geq1}$ be defined as in \eqref{lyapunov_function}. If Assumptions \ref{assump1}, \ref{assump2} and \ref{assump3} hold  for problem \eqref{eq1} with affine constraints,  $\mathcal{S} = \mathcal{S}_{\bar{P}}^0$ and $\bar{P}$ defined in \eqref{alpha_hat}, $\rho$ chosen as in \eqref{rho_def_linear}, and $\ubar{\beta} = \mathcal{O}(L_f)$, then for any $\epsilon > 0$, Algorithm \ref{alg1} yields an $\epsilon$-first-order solution of \eqref{eq1} after
\[
K = (\bar{P} - \ubar{P}) \cdot \frac{128 \bar{\Gamma}^2}{\ubar{\beta} \epsilon^2} = \mathcal{O}\left(\frac{\kappa_A^2 L_f}{\epsilon^2} (\bar{P} - \ubar{P})\right)
\]
matrix-vector multiplications with $A$ and $A^T$.
\end{corrolary}

\begin{remark}
Note that the  complexity bound  from Corrollary \ref{main_result_linear} is similar to the lower bound derived in \cite{LiuLin:25} for the same problem class, i.e., problem  \eqref{eq1} with affine $F$, when using a primal first-order method that projects (inexactly) into the feasible set $\{x : Ax = b\}$ via matrix-vector multiplications with $A$ and $A^T$. More precisely, the  lower bound in \cite{LiuLin:25} for an inexact projected first-order method is of order $\mathcal{O}\left(\frac{\kappa_A L_f}{\epsilon^2} (f(x_0) - \ubar{f})\right)$ matrix-vector multiplications with $A$ and $A^T$. We believe that if one adds in Algorithm \ref{alg1} an extrapolation step  either in the dual variables (see \cite{KeMa:17}), in the primal variables (see  \cite{SunLiu:17}), or in both (see \cite{BotCse:23}), then it may be possible to achieve a complexity with condition number dependence, $\kappa_A$, matching the lower bound derived in \cite{LiuLin:25}.
\end{remark}


\subsection{Selection of the penalty parameter $\rho$}
The results above, which describe the total number of Jacobian evaluations required to find an $\epsilon$-first-order solution to the problem, assume that the penalty parameter $\rho$ exceeds a certain threshold, specifically the one given in \eqref{rho_def}. However, determining this threshold in advance is challenging, as it depends on unknown parameters of the functions involved in the problem as well as the algorithm's settings. To address this issue, we propose a scheme that allows for the determination of a sufficiently large $\rho$ without requiring explicit knowledge of these parameters. Inspired by Algorithm 3 in \cite{XieWri:21}, our approach repeatedly invokes Algorithm \ref{alg1} within an inner loop. If Algorithm \ref{alg1} fails to converge within a given number of iterations, we increase geometrically the penalty parameter $\rho$ by a constant multiple in the outer loop. The full implementation of this procedure is provided in Algorithm \ref{alg2}.

\begin{algorithm} 
\caption{L-AL Method with Trial Values of $\rho$}
\label{alg2}
\begin{algorithmic}[1]
\State \textbf{Initialization:}  $(x_{-1}^{*}, \lambda_{-1}^{*}) \in \mathbb{R}^n  \times \mathbb{R}^m$, $\mu,\tau, \eta > 1$, $\epsilon > 0$, $\rho_{0} > 1, \ubar{\beta}>0$ and $K_0 > 0$.
\State $t \gets 0$
\While{$\epsilon$-KKT conditions are not satisfied}
    \State Call Algorithm \ref{alg1} with $\rho_t\geq 1$ and $\mu, \beta_t^0\geq\ubar{\beta}$.
   \Statex{~~~~}
 Warm start with $(x_t^0, \lambda_t^0) \gets (x_{t-1}^{*}, \lambda_{t-1}^{*})$ for $K_t$  iterations of Algorithm 1.
    \State Update $\rho_{t+1} \gets \tau \rho_t$ and $K_{t+1} \gets \eta K_t$.
    \State $t \gets t+1$
\EndWhile
\end{algorithmic}
\end{algorithm}

\medskip 

\noindent Algorithm \ref{alg2} is well-defined and terminates in a finite number of iterations, provided that the parameter $\tau > 1$ and $\eta>1$. Specifically, during the $l$-th stage of Algorithm \ref{alg2}, we have $\rho_{l+1} = \tau^{l+1} \rho_0$ and $K_{l+1} = \eta^{l+1} K_0$. Let $M$ denote the maximum bound in \eqref{rho_def} that $\rho$ must exceed, with $1 \leq M < \infty$ (this is finite since all bounds depend only on constants from the problem data and the algorithm's parameters). Also, let $N$ denote the total number of iterations required to obtain an $\epsilon$-first-order solution of problem \eqref{eq1}. Note that $N \leq \mathcal{O}\left(\frac{\sqrt{\rho}}{\epsilon^2}\right)$, where $\rho$ is a value of the penalty parameter satisfying \eqref{rho_def}.
Consequently, we have:
\[
\rho_{l+1} = \tau^{l+1} \rho_0 \geq M \quad \text{and} \quad K_{l+1} = \eta^{l+1} K_0 \geq N
\]
provided that 
\[
l+1 \geq \max\left\{\frac{\log\left(\frac{M}{\rho_0}\right)}{\log(\tau)}, \frac{\log\left(\frac{N}{K_0}\right)}{\log(\eta)}\right\}.
\]
Therefore, $\rho$ needs to be increased at most $\frac{\log\left(\frac{M}{\rho_0}\right)}{\log(\tau)}$ times to meet the threshold in \eqref{rho_def}. Hence, Algorithm \ref{alg2} yields an $\epsilon$-first-order solution of problem \eqref{eq1} after at most  $\max\left\{\frac{\log\left(\frac{M}{\rho_0}\right)}{\log(\tau)}, \frac{\log\left(\frac{N}{K_0}\right)}{\log(\eta)}\right\} $ calls of Algorithm \ref{alg1}.


\section{Improved convergence under  strict saddle property} \label{sec5}
In this section, we explore the impact of the \textit{strict saddle property} (also called benign nonconvexity) \cite{GoyRoy:24} on the convergence rate of our augmented Lagrangian algorithm.  To achieve second-order convergence rates, we must impose additional  structure on the problem \eqref{eq1} such as the strict saddle property. For the purposes of this analysis,  in this section, we assume that the functions  $f$ and $F_i$ for all  $i=1, \ldots, m$ in the optimization problem  \eqref{eq1} are twice continuously differentiable.  We begin this analysis by adapting our previous assumptions to the new setting of problem \eqref{eq1} studied in this section.

\begin{assumption}
\label{as:Assu_SSF}
For any compact set $\mathcal{S}\subseteq\mathbb{R}^n$, there exist positive constants $M_f$, $M_F$, $L_f$, $L_F$, $H_f$ and $M_{F_i}$, $L_{F_i}$, $H_{F_i}$ for all $i=1,\ldots, m$,  such that $f$ and $F$ satisfy the following conditions:
\begin{enumerate}[(i)]
\item $\Vert \nabla f(x)\Vert \leq M_f$,\;  $\Vert \nabla^2    f(x)\Vert \leq L_f$ and $\Vert \nabla^2 f(x)-\nabla^2 f(y)\Vert \leq H_f\Vert x-y\Vert$  for all $x, y\in\mathcal{S}$ \label{4.1}.
\item $\Vert {J_F}(x)\Vert \leq M_F$ and $\Vert {J_F}(x)-{J_F}(y)\Vert \leq L_F\Vert x-y\Vert$ for all $x, y\in\mathcal{S}$.
\item  $\Vert \nabla{F}_i(x)\Vert \leq M_{F_i}$, \; $\Vert \nabla^2{F}_i(x)\Vert \leq L_{F_i}$ and $\Vert \nabla^2{F}_i(x)-\nabla^2{F}_i(y)\Vert \leq H_{F_i}\Vert x-y\Vert$ for all $x, y\in\mathcal{S}$.
\end{enumerate}
\end{assumption}


\noindent Note that Assumption \ref{as:Assu_SSF}.\textrm{(iii)} implies Assumption \ref{as:Assu_SSF}.\textrm{(ii)}, but possibly with more conservative constants.  Moreover, Assumption \ref{as:Assu_SSF} requires the gradients and the Hessians of the objective function and of the functional constraints to be locally Lipschitz continuous.   Next, we define the class of problems that has a strict saddle function structure on a given set $\mathcal{X} \subseteq \mathbb{R}^n$. 
The definition is inspired by \cite{GoyRoy:24} and encompasses many real-world applications such as deep learning, matrix factorization  and inverse problems.

\medskip 

\begin{definition}[Strict saddle function]\label{strict_saddle}
 Let $\varphi : \mathbb{R}^n \rightarrow \mathbb{R}$ be twice differentiable and let $\alpha$, $\theta$, $\gamma$ and $\xi$ be given positive constants.
The function $\varphi $ is $(\alpha, \theta, \gamma, \xi)$-strict saddle on $\mathcal{X} \subseteq\mathbb{R}^n$ if the subset $\mathcal{X}$ is decomposed into $\mathcal{X} = \mathcal{R}_1 \cup \mathcal{R}_2 \cup \mathcal{R}_3$, where
\begin{equation*}
\arraycolsep=0.2em
\begin{array}{lcl}
\mathcal{R}_1 & = & \{x \in  \mathbb{R}^n : \Vert \nabla \varphi (x)\Vert  \geq \alpha \}, \vspace{1ex} \\
\mathcal{R}_2 & = &  \{x \in \mathbb{R}^n : \lambda_{\text{min}}(\nabla^2 \varphi (x)) \leq -\theta \}, \vspace{1ex}\\
\mathcal{R}_3 & = & \Big\{x \in \mathbb{R}^n : \exists \;  \textrm{local minimizer $x^{*}$ of $\varphi$ such that $\mathrm{dist}(x, x^{*}) \leq \xi$ and $\varphi$ is} \vspace{-0.25ex}\\
&& \hspace{1.5cm} \textrm{ $\gamma$-strongly convex over set $\{ y \in  \mathbb{R}^n : \mathrm{dist}(y, x^{*}) \leq \xi \}$} \Big\}.
\end{array}
\end{equation*}
\end{definition}
From Definition \ref{strict_saddle} it follows that if $\varphi(\cdot)$ is a strict saddle function, then either the  gradient is sufficiently large,  the  Hessian has a sufficiently negative curvature, or $x$ is close to an \textit{isolated} local minimum.  Note that, the regions $\mathcal{R}_2$ and $\mathcal{R}_3$ are mutually exclusive, but the first region $\mathcal{R}_1$ may occur simultaneously with one of the other two. We can easily compute the gradient and Hessian of  $\mathcal{L}_{\rho}(\cdot, \lambda)$ as follows:
\begin{align}
\label{eq:gradhes}
& \nabla_x \mathcal{L}_{\rho}(x,\lambda)  =  \nabla f(x) + J_F(x)^T\left(\lambda + \rho F(x)\right), \\
& \nabla^2_{xx}\mathcal{L}_{\rho}(x,\lambda)  =  \nabla^2 f(x) + \sum_{i=1}^{m} \left(\lambda + \rho F(x)\right)_i \nabla^2 f_i(x) + \rho J_F^T(x) J_F(x). \nonumber 
\end{align}
It follows immediately that  \(\nabla_x \mathcal{L}_{\rho}\) is locally Lipschitz w.r.t. $x$ for any fixed $\lambda$ with the Lipschitz constant: 
\begin{equation*} 
L_{\rho}  \triangleq \sup_{(x,y) \in \mathcal{S} \times \Lambda} \left\{L_f + L_F \Vert \lambda  + \rho F(x)\Vert  + \rho M_F^2 \right\}, 
\end{equation*}
where  $\Lambda \subseteq \mathbb{R}^m$  is  any compact set (containing the dual variables).   By Assumption \ref{as:Assu_SSF}, we can also prove that the Hessian  \(\nabla^2_{xx} \mathcal{L}_{\rho}\) is locally Lipschitz with the Lipschitz constant: 
\begin{equation*} 
H_{\rho} \triangleq \sup_{(x,y) \in \mathcal{S} \times \Lambda } \Big\{H_f +  \Vert \lambda + \rho F(x)\Vert _{\infty} \cdot \sum_{i=1}^{m}{H_{f_i}} + \rho \Big(2M_FL_F+\sum_{i=1}^{m}{M_{f_{i}}L_{f_i}} \Big) \Big\}.
\end{equation*}
In what follows, we also make the following assumption.

\begin{assumption}\label{assum_strict_sadlle}
For fixed $\epsilon > 0$, we assume that $\mathcal{L}_{\rho}(\cdot,\lambda)$  is an $(\alpha, \theta \rho^{\zeta_1},  \gamma \rho^{\zeta_2}, \xi)$-strict saddle function on an $\epsilon$-feasible set $\{ x \; : \; \Vert F(x)\Vert \leq \epsilon \}$, where $\zeta_1,\zeta_2 \in[0,1]$ and $\lambda \in \Lambda \subseteq\mathbb{R}^m$ is given.
\end{assumption}
Note that since the augmented Lagrangian function $\mathcal{L}_{\rho}$ depends on $\rho$, it is reasonable to assume that the strict saddle parameters related to its Hessian, i.e., $\theta$ and $\gamma$ also depend  on $\rho$. 
Note that when $\zeta_1= \zeta_2=0$, we cover the case when these parameters are independent on $\rho$.  We modify Algorithm~\ref{alg1}  to obtain a new variant presented in Algorithm \ref{alg:ALSSF_alg} below.  Before presenting our adapted algorithm, let us first introduce the following quadratic approximation of $\mathcal{L}_{\rho}$:
\begin{equation*} 
\hat{\mathcal{Q}}_{\mathcal{L}_{\rho}}(x,\lambda;\bar{x}):=\mathcal{L}_{\rho}(\bar{x},\lambda)+\iprods{\nabla_x\mathcal{L}_{\rho}(\bar{x},\lambda), x - \bar{x}} + \frac{1}{2}(x-\bar{x})^T\left(\nabla_{xx}^2 \mathcal{L}_{\rho}(\bar{x},\lambda)\right)(x-\bar{x}), 
\end{equation*}
for all $x, \bar{x}, \lambda$. Now, we are ready to present Algorithm \ref{alg:ALSSF_alg} that  exploits the strict saddle property.

\begin{algorithm}
\caption{(Augmented Lagrangian for Strict Saddle Functions (ALSSF))}\label{alg:ALSSF_alg} 
\begin{algorithmic}[1]
\State \textbf{Initialization:}  $x^0\in \mathbb{R}^n$ and $\lambda^0\in\mathbb{R}^m$ and parameters $\epsilon>0$, $\rho > 0$, $\beta>0$ and $\upsilon > 0$.
\For{$k = 0, \ldots, K$}
	\If{$\Vert \nabla_x \mathcal{L}_{\rho}(x^k,\lambda^k)\Vert  > \alpha$}
	\State $x_{k+1}\gets\argmin_{x}{\bar{\mathcal{L}}_{\rho}(x,\lambda_{k};x_{k})+\frac{\beta}{2}{\|x-x_{k}\|}^2}$
	\Else    
	\If{$\lambda_{\text{min}}\left(\nabla_{xx}^2 \mathcal{L}_{\rho}(x^k,\lambda_{k})\right) < -\theta  \rho^{\zeta_1}$}
	\State $x^{k+1}\gets {\displaystyle\argmin_{x \in \mathbb{R}^n}}  \hat{\mathcal{Q}}_{\mathcal{L}_{\rho}}(x,\lambda_{k};x^k)+  \frac{\upsilon}{6}\norms{x - x^k}^3 $
	\Else  
	\State $x^{k+1}\gets {\displaystyle\argmin_{x \in \mathbb{R}^n}} \iprods{\nabla f(x^k) + J_F(x^k)^T(\lambda_{k} + \rho F(x^k)) , x - x^k} +  \frac{\beta}{2}\norms{x - x^k}^2 $
	\EndIf
	\EndIf
	\If{$\Vert F(x^{k+1})\Vert >\epsilon$} 
	\State $\lambda^{k+1}\gets \lambda^k+\rho F(x^{k+1})$
	\Else   
	\State $\lambda^{k+1}\gets \lambda^k$
	\EndIf
\EndFor
\end{algorithmic}
\end{algorithm}

\medskip 

\noindent Let us explain Algorithm~\ref{alg:ALSSF_alg}.
Leveraging Assumption \ref{assum_strict_sadlle}, the augmented Lagrangian function $\mathcal{L}_{\rho}$ satisfies the strict saddle  property.  Accordingly, we employ specific updates tailored to each of the three characteristic regions outlined by this property: 
\begin{itemize}
\item In the region $\mathcal{R}_1$, where the gradient norm of $\Lc_{\rho}$ is substantial (Line 3), we apply the primal update from our  Algorithm \ref{alg1} (Line 5).  This update is simple, has a closed form, and ensures a decrease in the augmented Lagrangian function $\Lc_{\rho}$. 
\item In contrast, when the current iterate lies in the region $\mathcal{R}_2\setminus\mathcal{R}_1$ (Line 6), characterized by the presence of strict saddle points, we employ a cubic regularization of the Newton method from \cite{Nes:18} to produce the next primal iterate (Line 7). 
This update effectively ensures that strict saddle points are avoided. 
Note that in the region $\mathcal{R}_2$, $\Lc_{\rho}$ is nonconvex.
Hence, we need a cubic regularized Newton update  instead of a gradient-type scheme, since the former can escape strict saddle points. 

\item In the region $\mathcal{R}_3\setminus\mathcal{R}_1$, where the augmented Lagrangian function is strongly convex (Line 8), we utilize the standard gradient method (instead, one can also use an accelerated gradient algorithm) \cite{Nes:18} (Line 9). 
The gradient method not only guarantees global convergence once in this region but it is also computationally efficient. 

\item Finally, the dual multipliers are updated classically  outside the approximate feasible region (Line 13). Otherwise, they are kept unchanged (Line~15).  
\end{itemize}

\medskip 

\noindent The following lemma proves that Algorithm \ref{alg:ALSSF_alg} guarantees that after only one iteration, we reach an $\epsilon$-approximate feasible region and the iterates remain there afterwards.  

\begin{lemma}\label{le:feasibility}
Suppose that $\{(x^{k},\lambda^{k})\}$  generated by  Algorithm \ref{alg:ALSSF_alg} is bounded and  $\rho \geq \frac{2M}{\epsilon}$, where $\epsilon>0$ and $M>0$ is such that $\Vert \lambda^k\Vert \leq M$ for any $k\geq 0$. Then, for all $k \geq 1$, we have
\begin{equation*}
    \Vert F(x^k)\Vert \leq \epsilon.
\end{equation*}
\end{lemma}

\begin{proof}
See Appendix.
\end{proof}

\noindent From  Lemma \ref{le:feasibility}, it follows that Algorithm~\ref{alg:ALSSF_alg} updates only once the dual variables. 
Hence, $\lambda^k = \lambda^1$ for all $k \geq 1$.  
Moreover, Lemma \ref{le:feasibility} together with Assumption \ref{assum_strict_sadlle}, shows that  the function  $\mathcal{L}_{\rho}(\cdot,\lambda^k)$ is $(\alpha, \theta  \rho^{\zeta_1}, \gamma\rho^{\zeta_2}, \delta)$-strict saddle  for any $k \geq 1$.   Let us now prove that if $x^k\in\mathcal{R}_1$, then $\mathcal{L}_{\rho}(\cdot,\lambda^k)$ strictly decreases at the next iterate.

\begin{lemma}\label{le:lemma_R1}
Let $\{(x^{k},\lambda^{k})\}$  be generated by  Algorithm \ref{alg:ALSSF_alg} and $\epsilon>0$.  Suppose that Assumptions \ref{as:Assu_SSF} and  \ref{assum_strict_sadlle} hold on a compact set $\mathcal{S}$ on which the primal iterates belong to, there exists  $M>0$ such that $\Vert \lambda^k\Vert \leq M$ for any $k\geq 0$,  $\rho\geq\frac{2M}{\epsilon}$,  $x^k \in \mathcal{R}_1$  and $\beta\geq L_{\rho}$. Then, for all $k \geq 1$, we have
\begin{equation*}
\mathcal{L}_{\rho}(x^{k+1},\lambda^{k+1})-\mathcal{L}_{\rho}(x^{k},\lambda^{k})\leq -\frac{\alpha^2}{8\beta}.
\end{equation*}
\end{lemma}

\begin{proof}
See Appendix.
\end{proof}

\noindent Next, we prove that if  $x^k\in\mathcal{R}_2\setminus\mathcal{R}_1$, i.e.,  the region in which the cubic regularized Newton update (Line 7) is used, then $\Lc_{\rho}$ also strictly decreases at the next iterate.

\begin{lemma}\label{le:lemma_R2}
Let $\{(x^{k},\lambda^{k})\}$  be generated by  Algorithm \ref{alg:ALSSF_alg} and $\epsilon>0$. 
Suppose that Assumptions \ref{as:Assu_SSF} and  \ref{assum_strict_sadlle} hold on a compact set $\mathcal{S}$ on which the primal iterates belong to, there exists  $M>0$ such that $\Vert \lambda^k\Vert \leq M$ for any $k\geq 0$, $\rho \geq \frac{2M}{\epsilon}$,  $H_{\rho} \leq \upsilon \leq 2 H_{\rho}$ and   $x^k \in \mathcal{R}_2\setminus\mathcal{R}_1$. 
Then, for all $k \geq 1$, we have
\begin{equation*}
\mathcal{L}_{\rho}(x^{k+1},\lambda^{k+1})-\mathcal{L}_{\rho}(x^{k},\lambda^{k})\leq -\frac{\theta^3  \rho^{3\zeta_1}}{96 H_{\rho}^2}.
\end{equation*}
\end{lemma}

\begin{proof}
See Appendix.
\end{proof}

\noindent Let us now provide the convergence rate of Algorithm \ref{alg:ALSSF_alg} in the region $\mathcal{R}_3\setminus\mathcal{R}_1$ in which the gradient updates (Line 7) are used. 

\begin{lemma}\label{le:quadratic}
Let $\{(x^{k},\lambda^{k})\}$  be generated by  Algorithm \ref{alg:ALSSF_alg} and $\epsilon>0$. 
Suppose that Assumptions \ref{as:Assu_SSF} and \ref{assum_strict_sadlle} hold on a compact set $\mathcal{S}$ on which the primal iterates belong to, there exists  $M > 0$ such that $\Vert \lambda^k\Vert \leq M$ for any $k\geq 0$, $\rho \geq \frac{2M}{\epsilon}$, $x^k \in \mathcal{R}_3\setminus\mathcal{R}_1$ and $ \beta \geq L_{\rho}$.
Then, Algorithm \ref{alg:ALSSF_alg} converges to $x^{*}$ with the following rate (we denote $q_{\rho} = \frac{\gamma \rho^{\zeta_2}}{\beta}$):
\begin{equation*}
	\Vert \nabla_x \mathcal{L}_{\rho}(x^{k+N}, \lambda^k)\Vert \leq L_{\rho}\Vert x^{k+N} - x^{*}\Vert  \leq L_{\rho} \left(1- q_{\rho}\right)^\frac{N}{2} \Vert x^k - x^{*}\Vert. 
\end{equation*}
Moreover, after $N= \mathcal{O}{\left( \frac{1}{q_{\rho}} \log{\left(\frac{L_{\rho}\xi}{\epsilon}\right)}\right)} = \mathcal{O}\left( \frac{1}{\epsilon^{1-\zeta_2}} \log\left(\frac{1}{\epsilon}\right)\right)$  iterations, Algorithm~\ref{alg:ALSSF_alg} achieves $\Vert \nabla_x \mathcal{L}_{\rho}(x^{k+N}, \lambda^k)\Vert  \leq \epsilon$.
\end{lemma}

\begin{proof}
See Appendix.
\end{proof}

\noindent Let us derive the maximum number of iterations required by Algorithm \ref{alg:ALSSF_alg} to enter into $\mathcal{R}_3\setminus \mathcal{R}_1$.

\begin{lemma} \label{le:R3_enter}
Let $\{(x^{k},\lambda^{k})\}_{k\geq0}$ be generated by  Algorithm \ref{alg:ALSSF_alg} and $\epsilon>0$. 
Suppose that Assumptions \ref{as:Assu_SSF} and  \ref{assum_strict_sadlle} hold on a compact set $\mathcal{S}$ on which the primal iterates belong to, there exists  $M>0$ such that $\Vert \lambda^k\Vert \leq M$ for any $k\geq 0$,  $\rho\geq\frac{2M}{\epsilon}$,  $\beta\geq L_{\rho}$ and $H_{\rho} \leq \upsilon \leq 2 H_{\rho}$.
Then, Algorithm \ref{alg:ALSSF_alg} takes at most 
\begin{equation*}
	\left(\mathcal{L}_{\rho}(x^{1},\lambda^{k}) - \mathcal{L}_{\rho}(x^{*},\lambda^{k})\right)\left(\frac{8\beta}{\alpha^2}+ \frac{96 H_{\rho}^2}{\theta^3  \rho^{3\zeta_1}}\right)=\mathcal{O}\left( \frac{1}{\epsilon} + \frac{1}{\epsilon^{2-3\zeta_1}}\right)
\end{equation*}
iterations to enter into the region $\mathcal{R}_3\setminus \mathcal{R}_1$.
\end{lemma}

\begin{proof}
See Appendix.
\end{proof}

\noindent  Finally, we estimate the total complexity of Algorithm \ref{alg:ALSSF_alg} to reach an $\epsilon$-second-order solution $\bar{x}$ for problem \eqref{eq1}.
 
\begin{theorem}\label{th:ALSSS_alg_convergence}
Let $\{(x^{k},\lambda^{k})\}_{k\geq0}$  be generated by  Algorithm~\ref{alg:ALSSF_alg} and $\epsilon>0$.  Suppose that Assumptions \ref{as:Assu_SSF} and  \ref{assum_strict_sadlle} hold on a compact set $\mathcal{S}$ on which the primal iterates belong to, there exists  $M>0$ such that $\Vert \lambda^k\Vert \leq M$ for any $k\geq 0$,  $\rho\geq\frac{2M}{\epsilon}$,   $\beta\geq L_{\rho} $ and $H_{\rho} \leq \upsilon \leq 2 H_{\rho}$. Then,  Algorithm \ref{alg:ALSSF_alg}  yields an $\epsilon$-second-order  solution to problem \eqref{eq1} after at most $K$ iterations, where
\begin{equation*}
K = \mathcal{O}\left(\frac{1}{\epsilon^{1-\zeta_2}} \log \left(\frac{1}{\epsilon}\right) + \frac{1}{\epsilon} + \frac{1}{\epsilon^{2-3\zeta_1}}\right).
\end{equation*}
\end{theorem}

\begin{proof}
For all $k \geq 1$, by Lemma \ref{le:feasibility}, it is guaranteed that $\Vert F(x^k)\Vert  \leq \epsilon$. 
In addition, after \(\mathcal{O}\left(\frac{1}{\epsilon}+ \frac{1}{\epsilon^{2-\zeta_1}}\right)\) iterations, Algorithm~\ref{alg:ALSSF_alg}  enters into the region $\mathcal{R}_3 \setminus \mathcal{R}_1$ due to Lemma \ref{le:R3_enter}.  
Finally, by Lemma \ref{le:quadratic}, it takes $\mathcal{O}\left(\frac{1}{\epsilon^{1-\zeta_2}}\log\frac{1}{\epsilon}\right)$  iterations  to reach a point $x^K$ such that
\begin{equation*}
	\Vert F(x^K)\Vert \leq\epsilon, \quad \Vert \nabla_x\mathcal{L}_{\rho}(x^K,\lambda^K)\Vert \leq \epsilon, \quad \nabla^2_{xx}\mathcal{L}_{\rho}(x^K,\lambda^K)\succeq \gamma \rho^{\zeta_2} I_n\succ 0.
\end{equation*}
Using the expressions of $\nabla_x\mathcal{L}_{\rho}(x^K,\lambda^K)$ and $\nabla^2_{xx}\mathcal{L}_{\rho}(x^K,\lambda^K)$, see \eqref{eq:gradhes},  denoting $\bar{\lambda}=\lambda^K+\rho F(x^K)$ and letting $d \in \mathbb{R}^n$ such that $J_F(x^K) d = 0$, we get:
\begin{equation*}
\arraycolsep=0.2em
\left\{\begin{array}{ll}
       &  \Vert \nabla f(x^K)+ J_F(x^K)^T\bar{\lambda}\Vert \leq \epsilon, \vspace{1ex}\\
       & \Vert F(x^K)\Vert \leq\epsilon, \vspace{1ex} \\
       & d^T \nabla^2_{xx}\mathcal{L}_{\rho}(x^K,\lambda^K)d =  d^T \left(\nabla^2 f(x^K) + \sum_{i=1}^m \bar{\lambda}_i\nabla^2 f_i(x^K)\right)d \geq \gamma \rho^{\zeta_2} > 0.
\end{array}\right.
\end{equation*}
Therefore, we conclude that $x^K$ is an $\epsilon$-second-order solution to \eqref{eq1} in the sense of Definition \ref{de:2nd_approx_sol}. This completes our proof. \qed 
\end{proof}

\medskip 

\noindent As we can observe from Theorem \ref{th:ALSSS_alg_convergence}, Algorithm \ref{alg:ALSSF_alg} is able to escape strict saddle points due to the cubic regularized Newton step but it may take long to do so (around $\mathcal{O}\left(\frac{1}{\epsilon^{2-3\zeta_1}}\right)$ iterations).  Note that, on the one hand, if $\zeta_1$ and $\zeta_2$ are close to zero, then the complexity required to yield an $\epsilon$-second-order solution to problem  \eqref{eq1} is of  order  $\mathcal{O}\left(\frac{1}{\epsilon^{2}}\right)$ Jacobian evaluations. 
On the other hand, if $\zeta_1$ and $\zeta_2$ are close to $1$, then this complexity reduces to  $\mathcal{O}\left(\frac{1}{\epsilon}\right)$.  It is worth noting that Algorithm \ref{alg:ALSSF_alg} achieves improved complexity bounds ranging from  $\mathcal{O}\left(\frac{1}{\epsilon^2}\right)$ to $\mathcal{O}\left(\frac{1}{\epsilon}\right)$ to obtain an $\epsilon$-second-order solution,  compared to $\mathcal{O}\left(\frac{1}{\epsilon^2}\right)$ complexity of  Algorithm~\ref{alg1}  for merely obtaining an $\epsilon$-first-order  solution.  This improvement narrows the gap between  the theoretical and practical performance of augmented Lagrangian-based methods and may explain why these methods often work well in practical applications and identify  (global) minima.


\section{Numerical results}\label{sec6}
In this section we numerically compare Algorithm \ref{alg1} (L-AL) with SCP algorithm \cite{MesBau:21}, IPOPT \cite{WacBie:06} and Algencan \cite{AndBir:08} (which is also an augmented Lagrangian based method), on nonconvex optimization problems with nonlinear equality constraints. The simulations are implemented in Python and executed on a PC with (CPU 2.90GHz, 16GB RAM). Since one cannot guarantee that the SCP iterates converge to a first-order (KKT) point, we choose the following stopping criteria: we stop the algorithms when the difference between two consecutive values of the objective function is less than a tolerance $\epsilon_1=10^{-3}$ and the norm of constraints is less than a tolerance $\epsilon_2=10^{-5}$.  For the implementation of our method, we fix the penalty parameter $\rho$ for each problem to $\rho = 10^7$. The parameter $\beta_k$ is selected dynamically to satisfy the inequality \eqref{decrease_algorithm}. A problem is considered successfully solved by a method if the stopping criteria are met within 30 minutes. If an algorithm fails to meet these conditions, we indicate this with a “–” in the results. The numerical results are illustrated in Table \ref{tab1} and Figure \ref{fig1}. 

\begin{table}
\small
{
    
      \centering
\begin{adjustbox}{width=\columnwidth,center}

    \begin{tabular}{|c|cc|cc|cc|cc|}
    \hline
   \multirow{3}{*}{\backslashbox{(n,m)}{Alg}} 
     & \multicolumn{2}{c|}{L-AL} &
     \multicolumn{2}{c|}{SCP } &
      \multicolumn{2}{c|}{IPOPT}&
       \multicolumn{2}{c|}{Algencan}\\ \cline{2-9}
              & \# iter     & cpu  
          & \# iter    & cpu &
           \# iter     & cpu &
          \# iter     & cpu     \\ 
         & $f^*$    & $\|F\|$  
          & $f^*$    & $\|F\|$ &
           $f^*$    & $\|F\|$  &
          $f^*$    & $\|F\|$\\
    \hline

    OPTCTRL3 &   8 &  0.14
    &5 &  0.16 & 
      7 & 7.40 & 
    6 & \textbf{0.01}\\
    (119,80)& 2048.01 &  8.77e-10
      &2048.01 & 4.52e-10 & 
      2048.01& 1.84e-08 & 
    2048 & 3.15 e-10\\\hline
      
   OPTCTRL3 &  36 &  \textbf{1.23} 
    &7 &  1.55 & 
      10 & 11.99& 
    13 & 3.47\\
    \shortstack{(1199,800)}& 18460.22 & 5.29e-08
      &18460.22 & 1.84e-09 & 
      18460.22& 6.33e-09 & 
   18460 & 7.49 e-09\\\hline

    OPTCTRL3&  56 &  \textbf{19.57}
    &24 &  105.08 &  
      11 & 26.95 & 
    11 & 102.68\\
   (4499,3000)& 74465.03 & 1.76e-08
      &74465.03 & 6.87e-09 &
      74465.03& 1.09e-08 & 
    74470 & 8.66 e-09\\\hline

        DTOC4 &  4 &  \textbf{0.98} 
    &4 &  5.81 &  
      3 & 23.51 & 
    13 & 4.88\\
    (2997,1998)& 2.87 & 2.82-07
      &2.87 & 2.83e-07 &  
      2.87& 9.33e-09 & 
    2.87 & 6.59 e-09\\\hline
      
   DTOC4 &  4 &  \textbf{2.01} 
    &3 &  16.74 &  
      3 & 29.02 & 
    13 & 12.35\\
    (4497,2998)& 2.87 & 3.02e-07
      &2.87 & 4.87e-10 &  
      2.87& 3.66e-08 & 
    2.87 & 3.56e-08\\\hline

    DTOC4 &  4 &  \textbf{46.91} 
    &4 &  566.80 &  
      3 & 146.73 & 
    18 & 149.66\\
    (14997,9998)& 2.87 & 3.40e-07
      &2.87 & 1.05e-07 &  
      2.86& 4.49e-09 & 
    2.86 & 7.27e-09\\\hline

    DTOC5 &  7 &  \textbf{0.33}
    &7 &  0.98 &  
      3 & 12.06 & 
    19 & 0.52\\
       \shortstack{(998,499)} & 1.53 & 3.45e-06
      &1.53 & 3.45e-06 &  
      1.53& 7.76e-07 & 
    1.53 & 9.72 e-08\\\hline
      
   DTOC5 &  10 &  \textbf{1.32}
        &10 &  4.52 &  
      3 & 18.74 & 
    12 & 1.72\\
      \shortstack{ (1998,999)} & 1.53 & 1.56e-06
      &1.53 & 1.56e-06 &  
      1.53& 6.88e-08 & 
    1.53 & 3.11 e-08\\\hline
      
    DTOC5&  23 &  \textbf{42.40 }
    &24 &  799.07 &  
      3 & 75.25 & 
    18 & 48.27\\
    (9998,4999)& 1.54 & 2.19e-07
      &1.54 & 1.96e-07 &  
      1.53& 2.49e-07 & 
    1.53 & 3.31 e-07\\\hline

    ORTHREGA&  37 &  {0.91}
    & 39 &  1.73 &  
      76 & 10.14 & 
    19 & \textbf{0.28}\\
    (517,256)& 1414.05 & 1.23e-06
      &1664.80 & 1.24e-06 &  
      1414.05& 6.19e-10 & 
    1414 & 3.34 e-09\\\hline
      
    ORTHREGA &  53 &  \textbf{13.27}
    &67 &  31.78 &  
      14 & 23.99 & 
    32 & {14.21}\\
    (2053,1024) & 5661.43 & 7.90e-07
      &6654.78 & 2.07e-06 &  
      5661.43& 9.25e-07 & 
    5661 & 2.17 e-08\\
       \hline

        ORTHREGA &  58 & \textbf{65.72}
    & - &   - &  
      20 & 71.78 & 
     40 & 68.61 \\
    (8197,4096) &  22647.84 & 1.83e-07
      & - & - &  
      22674.84 & 1.86e-09 & 
     22674.84& 6.32e-08\\\hline
      

   %
       MSS1 & 70  &  \textbf{1.23}
    & 12& 0.15  &  
      53 & 13.52 &  
    15 & 0.53\\
       \shortstack{(90, 73)} & -15.99 & 8.11e-06
      &-8.71e-08 & 1.76e-06 &  
      -16.00& 4.17e-08 & 
    -15.00 & 3.29 e-08\\\hline
      
     
       MSS2 & 58  &  \textbf{21.99}
    & 21& 8.05  &  
      7 & 14.65 & 
    - & -\\
       \shortstack{(756, 703)} & -123.99 & 3.11e-06
      & -2.53e-10& 6.12e-06 &  
      -26.97& 5.96e-08 &  
    - & -\\\hline
      
   
       MSS3 & 58  &  \textbf{106.79}
    & 22& 135.15  &  
      -&- &  
    - & -\\
       \shortstack{(2070, 1981)} & -338.91 & 9.42e-07
      &-5.29e-09  & 7.76e-06  &  
      -& - & 
    - & -\\\hline
      
     %
       OPTCTRL6 & 56  &  19.03
    & 24&   \textbf{13.46}  &  
      13 & 27.42 & 
    11 & 101.34\\
       \shortstack{(4499, 3000)} & 74465.03 & 1.85e-08
      &74465.03 & 3.27e-09 &  
      74465.03& 2.32e-09 & 
    74470 & 8.47 e-09\\\hline

       OPTCDEG2 & 375  &  \textbf{198.97}
    & 3& 2.71  &  
      4 & 7.92 & 
    19 & 107.27\\
       \shortstack{(4499, 3000)} & 7.80 & 1.00e-07
      &59.08 & 5.85e-08 &  
      227.72& 6.55e-08 &  
    227.7 & 2.58e-07\\\hline

    OPTCDEG3 & 9  &  \textbf{9.01}
    & 43 & 22.64  &  
      11 & 21.33 & 
    25 & 84.92\\
       \shortstack{(4499, 3000)} & 12.13 & 8.28e-06
      &12.13 & 6.12e-06 &  
      12.13& 4.61e-07 &  
    12.13 & 7.04e-07\\\hline

    
       ORTHREGC & 28  &  \textbf{20.09}
    & 29& {20.93}  &  
      16 & 31.14 &  
    26 & 17.82\\
       \shortstack{(5005, 2500)} & 94.81 & 9.92e-06
      &94.81 & 8.42e-06 &  
      94.81& 7.52e-07 & 
    94.81 & 3.07e-07\\\hline

               EIGENB2 & 6  & \textbf{2.21}
    &5 &  5.28 &  
      27 & {56.61} & 
    6 & 303.18\\
       \shortstack{(2550, 1275)} & 0.00 & 7.27e-06
      &110.50 & 1.61e-14 &  
      0.00& 5.45e-09 & 
    0.00 & 8.33e-08\\\hline

       EIGENC2 & 6  &  \textbf{1.95}
    & 6 & 4.68  &  
      13 & {24.93} &  
    6 & 32.02\\
       \shortstack{(2652, 1326)} & 0.01  & 5.98e-06
      & 11162.75& 4.64e-16 &  
      0.00& 8.43e-10 & 
    0.00 & 3.51e-10\\\hline
    
     
       EIGENACO & 5  &  {2.43}
    & 8& 1.75  &  
      - & - & 
    2 & \textbf{1.87}\\
       \shortstack{(2550, 1275)} & 0.01 & 4.22e-06
      &22425.04 & 2.37e-18 &  
      -& - & 
    0.00 & 3.21e-09\\\hline

       EIGENBCO & 7  &  \textbf{3.37}
    & 5 & 1.23  &  
      9 & {19.58} & 
    - & -\\
       \shortstack{(2550, 1275)} & 0.01 & 1.45e-06
      &49.50 & 5.79e-16 &  
      0.00& 3.18e-17 & 
    - & -\\\hline
      
     
       EIGENCCO & 8  &  \textbf{3.52}
    & 7& 1.55  &  
      13 & {42.88} & 
    6 & 1203.96\\
       \shortstack{(2652, 1326)} & 0.00 & 5.69e-06
      &11100.51 & 1.99e-10 &  
      0.00& 2.75e-12 &  
    0.00 & 1.86e-10\\\hline


       DTOC1NA & 29  &  54.24
    & 4& 3.86  &  
      5 & {12.08} & 
    5 & \textbf{0.23}\\
       \shortstack{(5994, 3996)} & 4.14 & 3.09e-06
      &47.66 & 5.03e-13 &  
      4.15& 7.44e-11 & 
    4.14 & 8.03e-10\\\hline

     
       DTOC1NB & 17  &  38.46
    & 4& 3.89  &  
      5 &{11.71} &
    5 &  \textbf{0.36}\\
       \shortstack{(5994, 3996)} & 7.15 & 9.61e-06
      &48.47 & 1.68e-14 &  
      7.13& 6.31e-12 &  
    7.14 & 1.19e-10\\
      

       \hline

       DTOC1NC & 23  &  42.77
    & 6 & 5.76  &  
      3 & {8.12} &
    7 & \textbf{0.48}\\
       \shortstack{(5994, 3996)} & 35.21 & 7.52e-06
      & 58.64 & 3.20e-11 &  
      35.21& 5.80e-10 &
    35.20 & 6.42e-09\\\hline

     
       DTOC1ND & 37  &  65.97
    & 7& 6.70  &  
      3 & {12.79} & 
    4 & \textbf{0.26}\\
       \shortstack{(5994, 3996)} & 47.61 & 8.82e-06
      &66.66 & 3.38e-11 &  
      47.63& 7.85e-09 & 
    47.60 & 1.76e-10\\
      

       \hline

       SPINOP & 101  &  \textbf{76.31}
    & -& -  &  
      - & - & 
    - & -\\
       \shortstack{(1327, 1325)} & 150.50 & 9.34e-06
      &- & - &  
      -& - &  
    - & -\\\hline

       DTOC2 & 8  &  \textbf{11.51}
    & 21& 19.73  &  
      11 & 22.64 & 
    21 & 105.04\\
       \shortstack{(5994, 3996)} & 0.51 & 3.38e-06
      &0.91 & 9.66e-06 &  
      0.50& 6.21e-09 & 
    0.51 & 6.28e-09\\\hline

       ROBOTARM & 131  &  \textbf{106.41}
    & -& -  &  
      7 & {109.20} &
     23 & 377.26\\
       \shortstack{(4400, 3202)} & 7.84 & 9.62e-06
      &- & - &  
      9.14& 2.05e-08 & 
    9.14 & 1.20e-08\\\hline

       ROCKET & 163  &  8.31
    & -& -  &  
      5 & \textbf{7.49} &  
    - & -\\
       \shortstack{(2403, 2002)} & -1.00 & 5.44e-07
      &- & - &  
      -1.00& 2.73e-07 & 
    - & -\\\hline

       CATMIX & 29  &  \textbf{1.48}
    & 9 & 2.84  &  
      3 & 4.36 &  
    22 & 26.94\\
       \shortstack{(2401, 1600)} & -0.03 & 1.12e-06
      &-0.03 & 5.71e-09 &  
      -0.04& 4.88e-09 &  
    -0.04 & 2.55e-09 \\\hline
      
    \end{tabular}%
    \end{adjustbox}
}
\caption{Comparison between L-AL, SCP, IPOPT and Algencan algorithms  on test problems with equality constraints from the CUTEst collection. }
\label{tab1}
\end{table}

\medskip 
\noindent In Table \ref{tab1}, we report the number of iterations, CPU time (in seconds), objective value, and feasibility violation (measured as the Euclidean norm of the functional constraints) for L-AL, SCP, IPOPT, and Algencan on a set of real-world problems with nonlinear  equality constraints  selected from the CUTEst collection \cite{GouOrb:15}. Notably, for the majority of  test cases,  L-AL algorithm is able to yield optimal solutions faster than the other methods (the best CPU time is  highlighted in bold in the table). However, for a few problems, our method appears to be slower, although it still produces solutions of comparable quality to those obtained by IPOPT and/or Algencan. Moreover, Table \ref{tab1} shows that our method successfully solves all selected problems within the 30 minutes time limit. In contrast, the other methods fail on multiple instances, demonstrating the robustness of L-AL compared to SCP, IPOPT and Algencan.
    
\begin{figure}[htp]
    \centering
  \includegraphics[width=12cm,height=5cm]{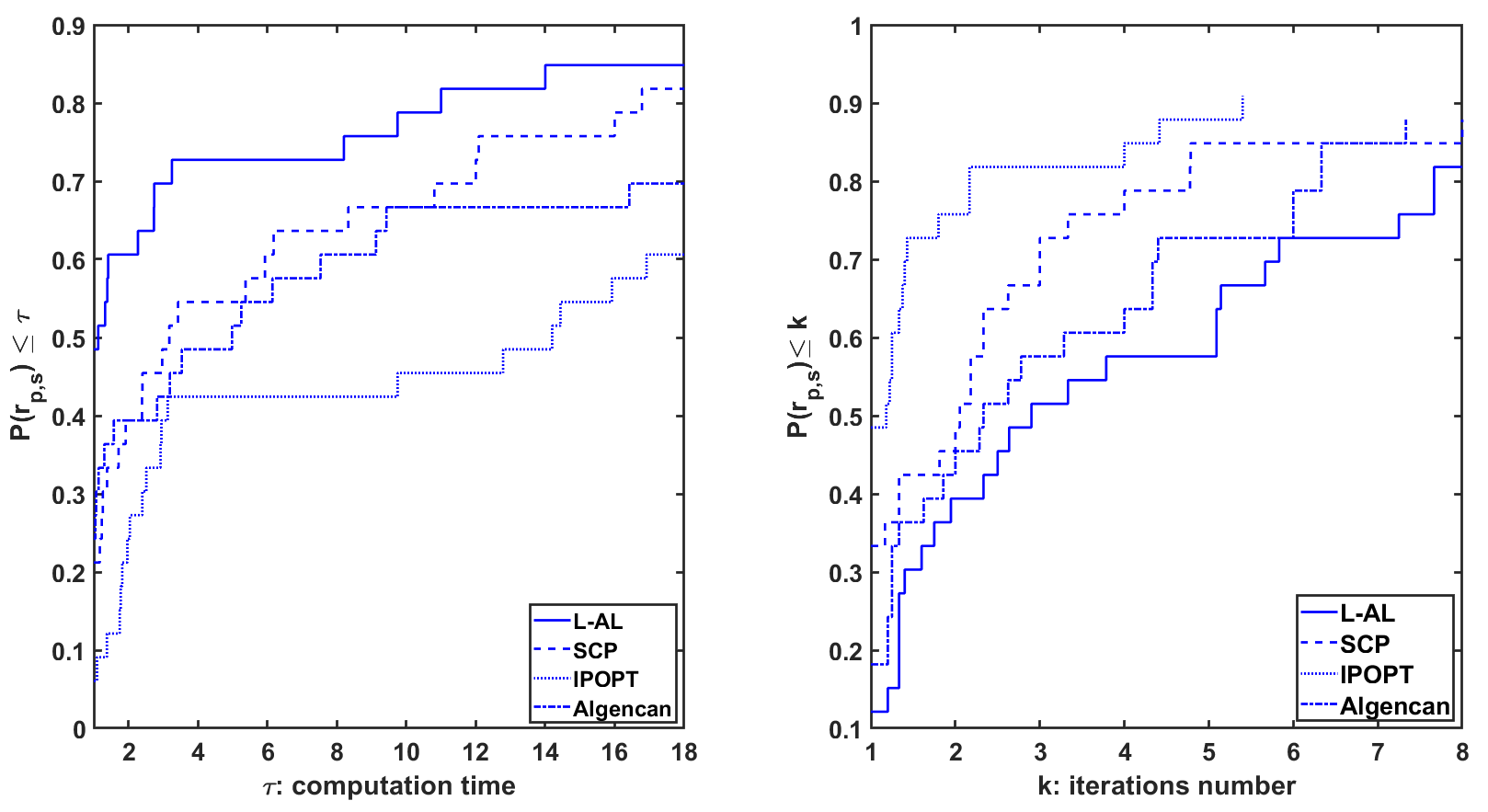}
    \caption{Performance profiles for  computation time (left) and  number of iterations (right). }
    \label{fig1}
\end{figure}

\medskip
\noindent  Figure \ref{fig1} presents performance profiles for computation time (left) and number of iterations (right) across the four algorithms. In these profiles, the vertical axis $P(r_{p,s} \leq \tau)$ (or $P(r_{p,s} \leq k)$) represents the proportion of test problems for which the performance ratio $r_{p,s}$ does not exceed a threshold $\tau$ (or $k$), respectively. Here, $r_{p,s}$ denotes the ratio of the computational time (or iteration count) required by solver $s$ to solve problem $p$ compared to the best time (or iteration count) achieved by any of the four solvers for the same problem.
From the time profile in Figure \ref{fig1} (left), it is clear that the proposed algorithm, L-AL, reaches a performance ratio of $1$ more quickly than SCP, IPOPT and Algencan, indicating superior efficiency. However, this trend is not consistently observed in terms of the number of iterations. Nonetheless, based on these preliminary experiments, we can conclude that L-AL is an efficient and robust algorithm for solving optimization problems with nonlinear equality constraints, often outperforming established solvers like IPOPT and Algencan in terms of computational speed.


\section{Conclusions}\label{sec7}
In this paper, we have proposed a linearized augmented Lagrangian  method for solving (locally) smooth  optimization problems with nonlinear equality constraints. In this method  we have linearized the objective and the functional  constraints  within the augmented Lagrangian function and added a regularization term. By dynamically generating the regularization (proximal) parameter,  we have proved global  asymptotic convergence and  convergence rate to an $\epsilon$-first-order solution. Moreover, we have numerically shown that the proposed algorithm is efficient, comparing it with several known algorithms and solvers,  such as SCP,  IPOPT and Algencan. 



\section*{Conflict of interest}
The authors declare that they have no conflict of interest.

\section*{Data availability}
It is not applicable.

\section*{Appendix}

\noindent \textbf{Proof of Lemma \ref{lambda_bound}}
Let $k \geq 1$.  Using the optimality condition for $x_{k+1}$, we have:
\begin{align*}
   \nabla f(x_{k})&+{{J_F}(x_{k})}^T\lambda_{k}+\rho{{J_F}(x_{k})}^T\Big(F(x_{k})+{J_F}(x_{k})(x_{k+1}-x_{k})\Big)\\
  &+\beta_{k+1}(x_{k+1}-x_{k})=0. 
\end{align*}
Combining  this with the update in Step 6 of Algorithm \ref{alg1}, we get:
\begin{equation}\label{eq2}
    \nabla f(x_{k})+{{J_F}(x_{k})}^T\lambda_{k+1}+\beta_{k+1}(x_{k+1}-x_{k})=0.
\end{equation}
By replacing $k$ with $k-1$, we obtain:
\begin{equation}\label{eq3}
    \nabla f(x_{k-1})+{{J_F}(x_{k-1})}^T\lambda_{k}+\beta_{k}(x_{k}-x_{k-1})=0.
\end{equation}
Subtracting \eqref{eq3} from \eqref{eq2}, we have: 
\begin{align*}
     \nabla f(x_{k})-\nabla f(x_{k-1})&+{{J_F}(x_{k})}^T\Delta\lambda_{k+1}+{\big({J_F}(x_{k})-{J_F}(x_{k-1})\big)}^T\lambda_{k}\\
    &+\beta_{k+1}\Delta x_{k+1}-\beta_{k}\Delta x_{k}=0\hspace{0.5cm} \forall k\geq1.
\end{align*}
Further, since $x_{k-1}, x_k \in\mathcal{S}$, then using  Assumption \ref{assump2}, we have: 
\begin{align}
    \|\Delta\lambda_{k+1}\|\leq&\frac{1}{\sigma}\Big(\|\nabla f(x_{k})-\nabla f(x_{k-1})\|+\|{J_F}(x_{k})-{J_F}(x_{k-1})\|\|\lambda_{k}\|\nonumber\\
     &+\beta_{k+1}\|\Delta x_{k+1}\|+\beta_{k}\|\Delta x_{k}\|\Big) \hspace{0.4cm} \forall k\geq1. \label{delta_to_replace}
\end{align} 
From \eqref{eq3}, we also have:
\begin{equation}\label{lam}
   \|\lambda_{k}\|\leq\frac{1}{\sigma}{\Big(\|\nabla f(x_{k-1})\|+\beta_{k}\|\Delta x_{k}\|\Big)} \leq\frac{1}{\sigma}{\Big(M_f+\beta_{k}\|\Delta x_{k}\|\Big)}.  
\end{equation}
Moreover, from Assumption \ref{assump2}, we  have: 
\[
   \|{J_F}(x_{k})-{J_F}(x_{k-1})\|\leq L_F\|\Delta x_{k}\| \hspace{0.2cm} \text{ and } \hspace{0.2cm} \|{J_F}(x_{k})-{J_F}(x_{k-1})\|\leq 2M_F.  
\]
By replacing, the above inequalities and \eqref{lam} in \eqref{delta_to_replace}, we obtain:
\begin{align}
\label{delta_lambda}
  & \|\Delta\lambda_{k+1}\|\nonumber\\
   &\leq\frac{1}{\sigma}\left({L_f\|\Delta x_{k}\|+\frac{M_fL_F+2M_F\beta_{k}}{\sigma}\|\Delta x_{k}\|+\beta_{k+1}\|\Delta x_{k+1}\|+\beta_{k}\|\Delta x_{k}\|}\right) \nonumber\\
      &=\frac{\beta_{k+1}}{\sigma}\|\Delta x_{k+1}\|+\frac{L_f\sigma + M_fL_F+(2M_F+\sigma)\beta_{k}}{\sigma^2}\|\Delta x_{k}\| \nonumber \\
   &\overset{M_F\geq \sigma}{\leq}\! \frac{L_f M_F\! +\! M_f L_F \!+\! 3 M_F \beta_{k+1}}{\sigma^2}\|\Delta x_{k+1}\|\!+\!\frac{L_f M_F \!+\! M_f L_F \!+\! 3 M_F \beta_{k}}{\sigma^2}\|\Delta x_{k}\|\nonumber \\
   & \leq \frac{(1+ 3\mu)(L_f M_F + M_f L_F) + (1+ 3\mu) M_F (\beta_{k+1}- \mu L_f)}{\sigma^2}\|\Delta x_{k+1}\| \nonumber\\
   &\quad +\frac{(1+ 3\mu)(L_f M_F + M_f L_F) + (1+ 3\mu) M_F (\beta_{k} - \mu L_f)}{\sigma^2}\|\Delta x_{k}\|,
\end{align}
where in the second inequality we use  that $M_F/\sigma \geq 1$.
Further, since $(a+b)^2\leq2a^2+2b^2$, we finally get \eqref{lambda_squared}. 
\qed
\medskip

\noindent {\textbf{Proof of Lemma \ref{lemma3}}.
 Note that the subproblem's objective function $x \mapsto \bar{\mathcal{L}}_{\rho}(x\cdot,\lambda_k;x_k)+\frac{\beta_{k+1}}{2}\|x-x_k\|^2$ is strongly convex with strong convexity constant $\beta_{k+1}$. Combining this with  the optimality of $x_{k+1}$  and the fact that $\bar{\mathcal{L}}_{\rho}(x_{k},\lambda_k;x_k)=\mathcal{L}_{\rho}(x_{k},\lambda_k)$, we get: 
\begin{equation} \label{initial_decrease}
 \bar{\mathcal{L}}_{\rho}(x_{k+1},\lambda_k;x_k)\leq \mathcal{L}_{\rho}(x_{k},\lambda_k) -\beta_{k+1}\|x_{k+1}-x_k\|^2.
\end{equation}
\noindent Further, since $x_{k}, x_{k+1} \in \mathcal{S}$ and $f$ has Lipschitz continuous gradient on $\mathcal{S}$, we have:
\begin{equation}\label{smooth_object}
f(x_{k+1}) - l_f(x_{k+1};x_k) \leq \frac{L_f}{2}\|x_{k+1}-x_k\|^2. 
\end{equation}
\noindent Moreover, using properties of the norm and the fact that $x_{k}, x_{k+1} \in \mathcal{S}$ and $F$ has Lipschitz continuous Jacobian on $\mathcal{S}$, we obtain: 
\begin{align}
   &\langle \lambda_k, F(x_{k+1}) \rangle + \frac{\rho}{2}\|F(x_{k+1})\|^2 - \frac{\rho}{2}\|l_F(x_{k+1};x_k)\|^2 - \langle \lambda_k, l_F(x_{k+1},x_k) \rangle\nonumber\\
   & =  \! \langle \lambda_k + \rho l_F(x_{k+1};x_k), F(x_{k+1})-l_F(x_{k+1};x_k) \rangle + \frac{\rho}{2}\|F(x_{k+1})-l_F(x_{k+1};x_k)\|^2 \nonumber\\
   & \leq  \!  \|\lambda_k \!+ \rho l_F(x_{k+1};x_k)\| \|F(x_{k+1})-l_F(x_{k+1};x_k) \| \! + \frac{\rho}{2}\|F(x_{k+1})-l_F(x_{k+1};x_k)\|^2 \nonumber\\
    &\overset{\text{Ass. } \ref{assump2}}{\leq} \|\lambda_k+\rho l_F(x_{k+1};x_k)\|  \frac{L_F}{2}\|\Delta x_{k+1}\|^2 + \frac{\rho}{2} \left(\frac{L_F}{2}\|\Delta x_{k+1}\|^2\right)^2.  \label{decrease_feasib}
\end{align}

\noindent  Using the fact that for any $a, b \geq 0$, we have $ab \leq \frac{a^2 + b^2}{2}$, we can bound $  \|\lambda_k + \rho l_F(x_{k+1};x_k)\|$ as follows:
\begin{align}
&\|\lambda_k+\rho l_F(x_{k+1};x_k)\|  - \frac{ \beta_{k+1} - L_f}{2L_F} \leq  \frac{L_F}{ 2(\beta_{k+1} - L_f)}    \| \lambda_k + \rho l_F(x_{k+1};x_k)\|^2  \nonumber\\
&= \! \frac{L_F\rho}{ \beta_{k+1} - L_f}\!  \left( \! \bar{\mathcal{L}_{\rho}} (x_{k+1},\lambda_k;x_k) - f(x_{k+1}) + f(x_{k+1}) - l_f(x_{k+1};x_k) +\frac{1}{2\rho}\|\lambda_k\|^2 \! \right)  \nonumber\\
& \overset{\eqref{initial_decrease}, \eqref{smooth_object}}{\leq} \!\!  \frac{L_F\rho}{\beta_{k+1} - L_f} \left( \mathcal{L}_{\rho}(x_k,\lambda_k) - f(x_{k+1}) - \frac{2\beta_{k+1} - L_f}{2}\|\Delta x_{k+1}\|^2 +\frac{1}{2\rho}\|\lambda_k\|^2 \right)  \nonumber\\
& \leq \frac{L_F\rho}{\beta_{k+1} - L_f} \left( \mathcal{L}_{\rho}(x_k,\lambda_k) - f(x_{k+1}) - \left(\beta_{k+1} - L_f\right)\|\Delta x_{k+1}\|^2 +\frac{1}{2\rho}\|\lambda_k\|^2  \right)  \nonumber\\
&\overset{\text{Ass. \ref{assump3}}}{\leq }\frac{L_F\rho}{\beta_{k+1} - L_f} \left( \mathcal{L}_{\rho}(x_k,\lambda_k) + \frac{1}{2 \rho} \|\lambda_k\|^2  - \ubar{f} \right) - L_F\rho\|\Delta x_{k+1}\|^2   \nonumber\\
&\overset{\eqref{eq_assu}}{\leq} \frac{\beta_{k+1}-L_f}{2L_F} - L_F\rho\|\Delta x_{k+1}\|^2. \label{bound_grad}
\end{align}

\noindent Using \eqref{bound_grad} in \eqref{decrease_feasib}, we further get:
\begin{align}
  &\langle \lambda_k, F(x_{k+1}) \rangle + \frac{\rho}{2}\|F(x_{k+1})\|^2 - \frac{\rho}{2}\|l_F(x_{k+1};x_k)\|^2 - \langle \lambda_k, l_F(x_{k+1},x_k) \rangle \nonumber\\
  & \leq   \frac{\beta_{k+1}-L_f}{2} \|\Delta x_{k+1}\|^2 - \left(\frac{\rho L_F^2}{2} - \frac{\rho L_F^2}{8} \right) \|\Delta x_{k+1}\|^4   \nonumber\\
  & =  \frac{\beta_{k+1}-L_f}{2} \|\Delta x_{k+1}\|^2  - \frac{3\rho L_F^2}{8} \|\Delta x_{k+1}\|^4. \label{to_use_next}
\end{align}
Moreover, we have:
\begin{align*} 
   &\mathcal{L}_{\rho}(x_{k+1},\lambda_k) - \bar{\mathcal{L}}_{\rho}(x_{k+1},\lambda_k;x_k) =f(x_{k+1}) - l_f(x_{k+1};x_k) +   \langle \lambda_k, F(x_{k+1}) \rangle  \\
   & \quad + \frac{\rho}{2}\|F(x_{k+1})\|^2 - \frac{\rho}{2}\|l_F(x_{k+1};x_k)\|^2 - \langle \lambda_k, l_F(x_{k+1},x_k) \rangle.
\end{align*}
Using  \eqref{smooth_object} and \eqref{to_use_next} in the previous relation, it follows that:

\begin{align*}
 \mathcal{L}_{\rho}(x_{k+1},\lambda_k) \leq& \bar{\mathcal{L}}_{\rho}(x_{k+1}, \lambda_k;x_k) + \frac{\beta_{k+1}}{2} \|\Delta x_{k+1}\|^2 - \frac{3\rho L_F^2}{8} \|\Delta x_{k+1}\|^4. 
\end{align*}
Therefore, using \eqref{initial_decrease}, we get:
\begin{align} \label{decrease_proof}
 \mathcal{L}_{\rho}(x_{k+1},\lambda_k) \leq & \mathcal{L}_{\rho}(x_{k},\lambda_k) - \frac{\beta_{k+1}}{2} \|\Delta x_{k+1}\|^2 - \frac{3\rho L_F^2}{8} \|\Delta x_{k+1}\|^4.
\end{align}
Finally, using the definition of $P_k$ in \eqref{lyapunov_function}, we have
\begin{align*}
P&_{k+1}-P_{k}\nonumber\\
=&\mathcal{L}_{\rho}(x_{k+1},\lambda_{k+1})-\mathcal{L}_{\rho}(x_{k+1},\lambda_{k})+\mathcal{L}_{\rho}(x_{k+1},\lambda_{k})-\mathcal{L}_{\rho}(x_{k},\lambda_{k})\nonumber\\
&+\frac{\beta_{k+1}}{4}\|x_{k+1}-x_{k}\|^2-\frac{\beta_{k}}{4}\|x_{k}-x_{k-1}\|^2\nonumber\\
\leq&\big\langle F(x_{k+1})-F(x_{k})-{J_F}(x_{k})(x_{k+1}-x_{k} )  ,\Delta\lambda_{k+1}\big\rangle+\frac{1}{\rho}\|\Delta\lambda_{k+1}\|^2\nonumber\\
&-\frac{2\beta_{k+1}-\beta_{k+1}}{4}\|\Delta x_{k+1}\|^2-\frac{\beta_{k}}{4}\|\Delta x_{k}\|^2 - \frac{3\rho L_F^2}{8}\|\Delta x_{k+1}\|^4 \nonumber\\
\overset{\eqref{inequality_prop}}{\leq}&\frac{\rho}{2}\|F(x_{k+1})-F(x_{k})-{J_F}(x_{k})\Delta x_{k+1} \|^2+\frac{1}{2\rho}\|\Delta \lambda_{k+1}\|^2+\frac{1}{\rho}\|\Delta \lambda_{k+1}\|^2\nonumber\\
&-\frac{\beta_{k+1}}{4}\|\Delta x_{k+1}\|^2-\frac{\beta_{k}}{4}\|\Delta x_{k}\|^2- \frac{3\rho L_F^2}{8}\|\Delta x_{k+1}\|^4 \nonumber\\
\overset{\text{Ass. \ref{assump2}}}{\leq}&\frac{\rho}{2}\left(\frac{L_F}{2} \|\Delta x_{k+1}\|^2 \right)^2+\frac{3}{2\rho}\|\Delta \lambda_{k+1}\|^2\nonumber\\
& -\frac{\beta_{k+1}}{4}\|\Delta x_{k+1}\|^2-\frac{\beta_{k}}{2}\|\Delta x_{k}\|^2- \frac{3\rho L_F^2}{8}\|\Delta x_{k+1}\|^4 \nonumber\\
=&\frac{3}{2\rho}\left\|\Delta\lambda_{k+1}\right\|^2-\frac{\beta_{k+1}}{4}\|\Delta x_{k+1}\|^2-\frac{\beta_{k}}{4}\|\Delta x_{k}\|^2 - \frac{\rho L_F^2}{4}\|\Delta x_{k+1}\|^4\\
\leq& \frac{3}{2\rho}\left\|\Delta\lambda_{k+1}\right\|^2-\frac{\beta_{k+1}}{4}\|\Delta x_{k+1}\|^2-\frac{\beta_{k}}{4}\|\Delta x_{k}\|^2,
\end{align*}
where the first inequality is obtained using \eqref{decrease_proof}  and the update of the dual multipliers in step 6 of  Algorithm \ref{alg1}. This proves our statement. \qed
}

\medskip

\noindent \textbf{Proof of Lemma \ref{conditional_decrease}}
Let $k\geq 1$. Using \eqref{lambda_squared} in \eqref{decrease_algorithm}, we obtain that: 
\begin{align*}
   P_{k+1}-P_{k} \leq&\left(\frac{3}{2\rho}c(\beta_{k+1})-\frac{\beta_{k+1}}{4}\right)\|\Delta x_{k+1}\|^2 + \left(\frac{3}{2\rho}c(\beta_{k})-\frac{\beta_{k}}{4}\right)\|\Delta x_{k}\|^2.
\end{align*}
 Therefore, in order to obtain \eqref{decrease_Lyapunov_lem}, the regularization parameter $\beta_{k}$ and $\beta_{k+1}$ should satisfy the following requirements:  
\begin{equation}\label{imp_ine}
   \beta_{k}\geq\frac{12}{\rho}c(\beta_k),  \quad \beta_{k+1}\geq\frac{12}{\rho}c(\beta_{k+1}). 
\end{equation}
  Let us check when \eqref{imp_ine} holds. To do so, we replace the expressions of $c(\beta_{k})$ and $c(\beta_{k+1})$  in \eqref{imp_ine} and reformulate the inequalities in \eqref{imp_ine}  as follows:
\begin{align}
\label{h_negk}
&  \frac{48(1+3\mu)^2\left(L_f M_F + M_f L_F\right)^2 }{\rho\sigma^4} + \frac{48(1+3\mu)^2 M_F^2}{\rho\sigma^4}(\beta_k -\mu L_f)^2 \nonumber \\
& \leq \mu L_f + (\beta_{k} - \mu L_f). 
\end{align}
\begin{align}
\label{h_neg}
&  \frac{48(1+3\mu)^2\left(L_f M_F + M_f L_F\right)^2 }{\rho\sigma^4} + \frac{48(1+3\mu)^2 M_F^2}{\rho\sigma^4}(\beta_{k+1} -\mu L_f)^2  \nonumber  \\
 & \leq \mu L_f + (\beta_{k+1} - \mu L_f). 
\end{align}
In fact, \eqref{h_negk} holds if:
\begin{equation} \label{rho_for_decrease}
    \rho \geq \!\max\!\left\{\frac{48(1\!+\!3\mu)^2\left(L_f M_F \!+\! M_f L_F\right)^2 }{\mu L_f\sigma^4}, \frac{48(1\!+\!3\mu)^2 M_F^2}{\sigma^4}(\beta_k \! -\! \mu L_f)\!\right\}.
\end{equation}
Similarly,  \eqref{h_neg} holds if:
\begin{equation} \label{rho_for_decrease1}
    \rho \geq \!\max \!\left\{ \!\frac{48(1\!+\!3\mu)^2\left(L_f M_F \!+\! M_f L_F\right)^2 }{\mu L_f\sigma^4},  \frac{48(1\!+\!3\mu)^2 M_F^2}{\sigma^4}(\beta_{k+1} \!-\! \mu L_f) \right\}.
\end{equation}
Therefore, if at some iterate \( k \geq 1 \), \( \rho \) satisfies \eqref{theright_choice_of_rho}, then the decrease \eqref{decrease_Lyapunov_lem} follows. This concludes our proof.
\qed

\medskip

\noindent \textbf{Proof of Lemma \ref{bounded_gradient}}
Using the optimality condition \eqref{eq2}, we have:
\[
    \nabla f(x_{k+1})=-{{J_F}(x_{k})}^T\lambda_{k+1}-\beta_{k+1}(x_{k+1}-x_{k}).
\]
It then follows, by exploiting the definition of $\mathcal{L}_{\rho}$ and the properties of the derivative, that:
\begin{align*}
     &\nabla_x\mathcal{L}_{\rho}(x_{k+1},\lambda_{k+1})=\nabla f(x_{k+1})+{{J_F}(x_{k+1})}^T\big(\lambda_{k+1}+\rho F(x_{k+1})\big)\\
     =  & \big({{J_F}(x_{k+1})}-{{J_F}(x_{k})}\big)^T\lambda_{k+1}+{{J_F}(x_{k+1})}^T\Delta\lambda_{k+1}-\beta_{k+1}\Delta x_{k+1}\\
     & +\rho{{J_F}(x_{k+1})}^T\big( F(x_{k+1})-F(x_{k})-{J_F}(x_{k})\Delta x_{k+1}\big).
\end{align*}
Using basic properties of the Euclidean norm, we further get:
\begin{align}
\label{needed1_lemma8}
  &  \|\nabla_x\mathcal{L}_{\rho}(x_{k+1},\lambda_{k+1})\|\nonumber\\
  &  \leq \|{{J_F}(x_{k+1})}-{{J_F}(x_{k})}\|\|\lambda_{k+1}\|+\|{{J_F}(x_{k+1})}\|\|\Delta\lambda_{k+1}\|\nonumber\\
    & \quad +\beta_{k+1}\|\Delta x_{k+1}\|+\rho\|{{J_F}(x_{k+1})}\|\| F(x_{k+1})-F(x_{k})-{J_F}(x_{k})\Delta x_{k+1}\|\nonumber\\
 &    {\overset{{\text{Ass.} \ref{assump2}, \eqref{lam}}}{\leq}} \;  \frac{M_fL_F}{\sigma}\|\Delta x_{k+1}\|+ \frac{L_F\beta_{k+1}}{\sigma}\|\Delta x_{k+1}\|^2+M_F\|\Delta\lambda_{k+1}\|\nonumber\\
    & \qquad+\beta_{k+1}\|\Delta x_{k+1}\|+\frac{\rho M_F L_F}{2} \|\Delta x_{k+1}\|^2\nonumber\\
    & = \frac{M_fL_F+ \sigma\beta_{k+1}}{\sigma}\|\Delta x_{k+1}\| \!+\! \frac{L_F(2\beta_{k+1} + \rho M_F\sigma)}{2\sigma}\|\Delta x_{k+1}\|^2 \!+\! M_F\|\Delta\lambda_{k+1}\|  \nonumber \\
  &  {\overset{\hspace{-0.5cm}\eqref{delta_lambda}}{\hspace{-0.5cm}\leq}} \!\! \left(\! \frac{M_f\!L_F\!+\!\sigma \beta_{k\!+\!1}}{\sigma}\!+\!\frac{M_F}{\sigma}\frac{\!(1\!+\! 3\mu)(L_f \!M_F \! +\! M_f\! L_F\!) \!+\! (1 \!+\! 3\mu) M_F (\beta_{k\!+\!1}\!-\! \mu\! L_f\!)}{\sigma} \!\right) \! \|\!\Delta x_{k\!+\!1}\!\|  \nonumber\\
    & \quad +\frac{M_F}{\sigma}\;\frac{(1+ 3\mu)(L_f M_F + M_f L_F) + (1+ 3\mu) M_F (\beta_{k}- \mu L_f)}{\sigma}\|\Delta x_{k}\| \nonumber\\
    & \quad  + \frac{L_F(2\beta_{k+1} + \rho M_F\sigma)}{2\sigma}\|\Delta x_{k+1}\|^2 \nonumber\\
    \leq & \frac{(2+ 3\mu)(L_f M_F + M_f L_F) M_F + (2+ 3\mu) M_F^2 (\beta_{k+1}- \mu L_f)}{\sigma^2}\|\Delta x_{k+1}\|\nonumber\\
    & \quad +\frac{(1+ 3\mu)(L_f M_F + M_f L_F) M_F + (1+ 3\mu) M_F^2 (\beta_{k}- \mu L_f)}{\sigma^2}\|\Delta x_{k}\| \nonumber\\
      & \quad + \frac{L_F(2\beta_{k+1} + \rho M_F\sigma)}{2\sigma}\|\Delta x_{k+1}\|^2 \nonumber\\
   & \leq  \frac{(2+ 3\mu)(L_f M_F + M_f L_F) M_F + (2+ 3\mu) M_F^2 (\beta_{k+1}- \mu L_f)}{\sigma^2}\|\Delta x_{k+1}\|\nonumber\\
    & \quad +\frac{(2+ 3\mu)(L_f M_F + M_f L_F) M_F + (2+ 3\mu) M_F^2 (\beta_{k}- \mu L_f)}{\sigma^2}\|\Delta x_{k}\| \nonumber\\
      & \quad  + \frac{L_F(2\beta_{k+1} + \rho M_F\sigma)}{2\sigma}\|\Delta x_{k+1}\|^2.
\end{align}
Similarly, we have:
\begin{align}
   & \|\nabla_{\lambda}\mathcal{L}_{\rho}(x_{k+1},\lambda_{k+1})\|=\|F(x_{k+1})\|\nonumber\\
   & \leq \|F(x_{k+1})-F(x_{k})-{J_F}(x_{k})\Delta x_{k+1}\|+\frac{1}{\rho}\|\Delta\lambda_{k+1}\|\nonumber\\
  &  {\overset{{\text{Ass. } \ref{assump2}}}{\leq}} \frac{L_F}{2}\|\Delta x_{k+1}\|^2+\frac{1}{\rho}\|\Delta\lambda_{k+1}\|\label{needed2_lemma8} \\
   & {\overset{{\eqref{delta_lambda}}}{\leq}} \frac{L_F}{2}\|\Delta x_{k+1}\|^2+\frac{(1\!+ \!3\mu)(L_f M_F\! +\! M_f L_F)  \!+\! (1\!+\! 3\mu) M_F (\beta_{k}\!-\! \mu L_f)}{\rho\sigma^2}\|\Delta x_{k}\|\nonumber\\
    & \quad +\frac{(1+ 3\mu)(L_f M_F +\! M_f L_F)  + (1+ 3\mu) M_F (\beta_{k+1}- \mu L_f)}{\rho\sigma^2}\|\Delta x_{k+1}\|, \nonumber
\end{align}
where the first inequality is obtained from the multipliers update in Step 6 of  Algorithm \ref{alg1}.
 Hence, it follows that:
\begin{align*}
    \|\nabla\mathcal{L}_{\rho}(x_{k+1},\lambda_{k+1})\|&\leq\|\nabla_x\mathcal{L}_{\rho}(x_{k+1},\lambda_{k+1})\|+\|\nabla_{\lambda}\mathcal{L}_{\rho}(x_{k+1},\lambda_{k+1})\|  \\
    &\leq\Gamma_{k+1}\|\Delta x_{k+1}\|+\Gamma_{k}\|\Delta x_{k}\| + c_{k+1}\|\Delta x_{k+1}\|^2\\
     &\leq\Gamma_{k+1}\|\Delta x_{k+1}\|+\Gamma_{k}\|\Delta x_{k}\| + c_{k+1}\|\Delta x_{k+1}\|^2 + c_k\|\Delta x_k\|^2,
\end{align*}
where      \[\Gamma_{k}=\left(M_F+\frac{1}{\rho}\right)\frac{(2+ 3\mu)(L_f M_F + M_f L_F)  + (2+ 3\mu) M_F (\beta_{k}- \mu L_f)}{\sigma^2},  \]
and \[
c_k= \frac{L_F}{2}\left(1+\frac{2\beta_{k} +\rho M_F \sigma}{\sigma}\right)\quad \forall k\geq 1.
\]
This proves our claim.\qed
\medskip


\noindent \textbf{Proof of Lemma \ref{usefull_lemma}}
Let \( k \geq 1 \). From inequality \eqref{lam}, we have:
\begin{align}
    &\frac{3}{\rho} \|\lambda_k\|^2 - \frac{\beta_k}{4} \|x_k - x_{k-1}\|^2 
    \leq \frac{6 M_f^2}{\rho \sigma^2} + \frac{6\beta_k^2}{\rho \sigma^2} \|x_k - x_{k-1}\|^2 - \frac{\beta_k}{4} \|x_k - x_{k-1}\|^2 \nonumber \\
    &= \frac{6M_f^2}{\rho \sigma^2} - \frac{\beta_k}{4} \left(1 - \frac{24\beta_k}{\rho \sigma^2} \right) \|x_k - x_{k-1}\|^2 \nonumber \\
    &\overset{\beta_k \leq \bar{\beta}}{\leq} \frac{6M_f^2}{\rho \sigma^2} - \frac{\beta_k}{4} \left(1 - \frac{24\bar{\beta}}{\rho \sigma^2} \right) \|x_k - x_{k-1}\|^2 \overset{\eqref{rho_def}}{\leq} 1, \label{eq_to_be_used}
\end{align}
which proves the first statement. Furthermore, using the definition of $\mathcal{L}_{\rho}$, we get:
\begin{align*}
    \mathcal{L}_{\rho}(x_k, \lambda_k) + \frac{1}{2\rho} \|\lambda_k\|^2 
    &= P_k - \frac{\beta_k}{4} \|\Delta x_k\|^2 + \frac{1}{2\rho} \|\lambda_k\|^2 \\
    &\leq P_k + \frac{3}{\rho} \|\lambda_k\|^2 - \frac{\beta_k}{4} \|\Delta x_k\|^2 
    \overset{\eqref{eq_to_be_used}}{\leq} P_k + 1,
\end{align*}
which proves the second statement. This completes the proof. \qed

\medskip

\noindent \textbf{Proof of Lemma \ref{bbound}}
We prove these results using  induction arguments. Let $k = 0$. Using the definition of $\mathcal{L}_{\rho}$, we have: 
 \begin{align}
   P_0 & = \mathcal{L}_{\rho}(x_0,\lambda_0) =f(x_0)+\langle \lambda_0  ,F(x_0)\rangle+\frac{\rho}{2}\|F(x_0)\|^2 \nonumber\\
    &{\overset{{\eqref{inequality_prop}}}{\leq}} f(x_0)+\frac{\|\lambda_0\|^2}{2\rho}+\frac{\rho}{2}\|F(x_0)\|^2+\frac{\rho}{2}\|F(x_0)\|^2 {\overset{{\eqref{eq4}}}{\leq}}\bar{f}+\frac{1}{2\rho}\|\lambda_0\|^2+c_0 \nonumber\\
    &\overset{\rho\geq 1}{\leq} \bar{f} + \frac{1}{2}\|\lambda_0\|^2 + c_0\leq \bar{f} + \|\lambda_0\|^2 + c_0. \label{ine10}
\end{align}   
According to Lemma~\ref{conditional_decrease}, in order to show that the sequence $\left\{P_k\right\}_{k \geq 1}$ is decreasing (i.e.,  relation~\eqref{decrease_Lyapunov_lem} holds for $k = 1$), it is sufficient to prove that $x_0$, $x_1$ and $x_2$ are all in some compact set.
Indeed, we have:
\begin{align*}
    &f(x_0)+\langle \lambda_0  ,F(x_0)\rangle+\frac{\rho}{2}\|F(x_0)\|^2 \\
    &\overset{\eqref{inequality_prop}}{\geq}  f(x_0) - \frac{\|\lambda_0\|^2}{2(\rho - \rho_0)} - \frac{\rho -\rho_0}{2}\|F(x_0)\|^2 + \frac{\rho}{2}\|F(x_0)\|^2\\
    &= f(x_0) + \frac{\rho_0}{2}\|F(x_0)\|^2 - \frac{\|\lambda_0\|^2}{2(\rho - \rho_0)} \overset{(\rho\geq \rho_0+1)}{\geq} f(x_0) + \frac{\rho_0}{2}\|F(x_0)\|^2 - \|\lambda_0\|^2.
\end{align*}
Combining this with \eqref{ine10}, we further get: 
\begin{equation}\label{x0_in_S}
    f(x_0) + \frac{\rho_0}{2}\|F(x_0)\|^2 \leq \bar{f} + c_0 + 2\|\lambda_0\|^2\leq \bar{P}. 
\end{equation}
Hence, $x_0\in \mathcal{S}^0_{\bar{P}}$.
Moreover, since $x_0 \in \mathcal{S}^0_{\bar{P}}$, $\lambda_0$ is bounded and  the subproblem in Step 5 of Algorithm~\ref{alg1} is quadratic and strongly convex, it follows that there exists a compact set $\mathcal{S}$, which may differ from $\mathcal{S}^0_{\bar{P}}$, such that $x_0, x_1 \in \mathcal{S}$. 
Then, from Assumption~\ref{assump2}, there exist positive constants $L_f'$ and $L_F'$, which may differ from  $L_f$ and $L_F$ (corresponding to compact set $\mathcal{S}^0_{\bar{P}}$), respectively. Furthermore, since $\lambda_0$ is bounded, it follows from the proof of Lemma~\ref{lemma3} (specifically, inequality~\eqref{decrease_proof}) that if
\[
\beta_1 \geq L_f' + L_F'\sqrt{2\rho} \sqrt{\mathcal{L}_{\rho}(x_0,\lambda_0) + \frac{1}{2\rho}\|\lambda_0\|^2 - \ubar{f}},
\]
then the following inequality holds:
\begin{align*}
    \mathcal{L}_{\rho}(x_1, \lambda_0) \leq \mathcal{L}_{\rho}(x_0, \lambda_0) - \frac{\beta_1}{2} \|x_1 - x_0\|^2.
\end{align*}
Further, using the definition of $\mathcal{L}_{\rho}$, we have
\begin{align}\label{ine1110}
    &f(x_1)+\langle \lambda_0  ,F(x_1)\rangle+\frac{\rho}{2}\|F(x_1)\|^2+\frac{\beta_1}{2}\|x_1-x_0\|^2\nonumber\\
    &{\overset{{}}{\leq}}\hspace{0cm} f(x_0)+\langle \lambda_0  ,F(x_0)\rangle+\frac{\rho}{2}\|F(x_0)\|^2\nonumber\\
    &{\overset{{\eqref{ine10}}}{\leq}}\bar{f}+\|\lambda_0\|^2+c_0.
\end{align}
After some rearrangements, we obtain:
\begin{align*}
    f(x_1)+\frac{\rho_0}{2}\|F(x_1)\|^2&{\overset{{(\rho\geq3\rho_0)}}{\leq}}f(x_1)+\frac{\rho}{6}\|F(x_1)\|^2\\
    &{\overset{{\eqref{ine1110}}}{\leq}}\bar{f}+\|\lambda_0\|^2+c_0-\langle \lambda_0  ,F(x_1)\rangle-\frac{\rho}{3}\|F(x_1)\|^2\\
    &=\bar{f}+\|\lambda_0\|^2+c_0-\frac{\rho}{3}\|F(x_1)+\frac{3\lambda_0}{2\rho}\|^2+\frac{3\|\lambda_0\|^2}{4\rho}\\
    &\leq\bar{f}+c_0+2\|\lambda_0\|^2\leq\bar{P}.
\end{align*}
Therefore, we find that $ x_1\in\mathcal{S}^0_{\bar{P}}$.  Hence, we have $\mathcal{S} = \mathcal{S}^0_{\bar{P}}$, $L_f=L_f'$ and $L_F=L_F'$.
Furthermore, using  \eqref{bar_beta}, we get:
\begin{align}\label{beta1_bound}
\beta_1 &\leq    \mu \left(L_f+ L_F\sqrt{2\rho}\sqrt{\mathcal{L}_{\rho}(x_0,\lambda_0) + \frac{1}{2 \rho}\|\lambda_0\|^2 - \ubar{f}}\right)\nonumber\\
& \overset{\eqref{ine10}, \; \rho\geq 1 }{\leq} \mu \left(L_f+ L_F\sqrt{2\rho}\sqrt{\bar{f}  + c_0 + \|\lambda_0\|^2 - \ubar{f}}\right)\nonumber\\
& \leq \mu \left(L_f+ L_F\sqrt{2\rho}\sqrt{\bar{f} + c_0 + 2\|\lambda_0\|^2  - \ubar{f}}\right) \nonumber\\
& \overset{\eqref{alpha_hat}}{\leq} \mu \left(L_f+ L_F\sqrt{2\rho}\sqrt{\bar{P}  - \ubar{f}}\right) \overset{\eqref{beta_bar_def}}{=} \bar{\beta}. 
\end{align} 
Moreover, since $x_0, x_1\in\mathcal{S}^0_{\bar{P}}$ and $D_{\bar{P}}$ is the diameter of $\mathcal{S}^0_{\bar{P}}$ together with the fact that  $\beta_1\leq\bar{\beta}$, it follows from \eqref{lam} that:
\begin{align}
& \|\lambda_1\|^2\overset{\eqref{lam}}{\leq} \left(\frac{1}{\sigma}({\|\nabla f(x_0)\|+\beta_1\|x_1 - x_0\|})\right)^2 \overset{\text{Ass. \ref{assump2}}, \eqref{beta1_bound}}{\leq}\frac{1}{\sigma^2}({M_f+\bar{\beta} D_{\bar{P}}})^2  \nonumber\\
 &\overset{\eqref{beta_bar_def}}{\leq}\frac{2M_f^2 + 4 \mu^2L_f^2D_{\bar{P}}^2}{\sigma^2} +\frac{8\mu^2L_F^2D_{\bar{P}}^2(\bar{P}   - \ubar{f})}{\sigma^2}\rho  \nonumber\\
 &=\frac{2M_f^2 + 4 \mu^2L_f^2D_{\bar{P}}^2 + 8\mu^2L_F^2D_{\bar{P}}^2(\bar{P}   - \ubar{f})\rho_0}{\sigma^2} +\frac{8\mu^2L_F^2D_{\bar{P}}^2(\bar{P}   - \ubar{f})}{\sigma^2}(\rho-\rho_0)  \nonumber\\
 &\overset{\eqref{choice_rho}}{\leq} \left(\frac{16\mu^2L_F^2D_{\bar{P}}^2(\bar{P}  - \ubar{f})}{\sigma^2} + 1\right)\left(\rho-\rho_0\right)\overset{\eqref{bar_gamma_before}}{\leq} 2\bar{\gamma}\left(\rho-\rho_0\right). \label{25a0}
\end{align}
 Similarly to the case of $x_1$, we now prove that $x_2 \in \mathcal{S}^0_{\bar{P}}$. Since $x_1 \in \mathcal{S}^0_{\bar{P}}$ and $\lambda_1$ is bounded (see \eqref{25a0}), and given that the subproblem in Step 5 of Algorithm~\ref{alg1} is quadratic and strongly convex, there exists a compact set $\mathcal{S}$ such that $x_1, x_2 \in \mathcal{S}$.
Moreover, since $x_0, x_1 \in \mathcal{S}^0_{\bar{P}}$, it follows that there exists a compact set $\mathcal{S}' \subseteq \mathcal{S}^0_{\bar{P}} \cup \mathcal{S}$ such that $x_0, x_1, x_2 \in \mathcal{S}'$. Then, from Assumption~\ref{assump2}, there exist positive constants $L_f', L_F'$, which may differ from  $L_f$ and $L_F$, respectively.
Furthermore, since $\lambda_1$ is bounded, Lemma~\ref{lemma3} implies that if
\[
\beta_2 \geq L_f' + L_F'\sqrt{2\rho}\sqrt{\mathcal{L}_{\rho}(x_1,\lambda_1) + \frac{1}{2\rho}\|\lambda_1\|^2 - \ubar{f}},
\]
then the following inequality holds:
\begin{equation} \label{x_2_bounded}
    \mathcal{L}_{\rho}(x_2, \lambda_1) \leq \mathcal{L}_{\rho}(x_1, \lambda_1) - \frac{\beta_2}{2} \|x_2 - x_1\|^2.
\end{equation}
Let us now prove that $P_1=\mathcal{L}_{\rho}(x_1,\lambda_1)+\frac{\beta_1}{4}\|x_1-x_0\|^2$ is bounded. We have $\beta_1\leq\bar{\beta}$. Moreover,  since $x_0, x_1 \in \mathcal{S}^0_{\bar{P}}$ and $\lambda_0$ is bounded,  then from Lemma \ref{lemma3}, we have:
\begin{align}
 P_1 &\leq P_0 +\frac{3}{2\rho}\|\lambda_1 - \lambda_0\|^2 - \frac{\beta_1}{4}\|x_1 - x_0\|^2  - \frac{\beta_0}{4}\|x_0 - x_{-1}\|^2 \nonumber\\
    & \overset{(x_0=x_{-1})}{=} \mathcal{L}_{\rho}(x_0,\lambda_0)+ \frac{6}{2\rho}\|\lambda_0\|^ 2+ \frac{6}{2\rho}\|\lambda_1\|^2 - \frac{\beta_1}{4}\|x_1 - x_0\|^2 \\
    & \overset{\eqref{first_bound}}{\leq} \mathcal{L}_{\rho}(x_0,\lambda_0) + \frac{6}{2\rho}\|\lambda_0\|^ 2+ 1  \nonumber\\
    & \overset{\eqref{ine10}, \; \rho\geq 1}{\leq} \bar{f}  + c_0 + 4 \|\lambda_0\|^2 + 1  \overset{\eqref{alpha_hat}}{=}\bar{P} -1. \label{P10_bar}
\end{align}
Further, using \eqref{P10_bar} and the expression of $P_1$  and of $\mathcal{L}_{\rho}$ in \eqref{x_2_bounded}, we get:
\begin{align}\label{ine111}
    &f(x_2)+\langle \lambda_1  ,F(x_2)\rangle+\frac{\rho}{2}\|F(x_2)\|^2 \nonumber\\
    &\leq \bar{P} -1 - \frac{\beta_1}{4}\|x_1-x_0\|^2- \frac{\beta_2}{2}\|x_2-x_1\|^2 \leq \bar{P} -1 - \frac{\beta_1}{4}\|x_1-x_0\|^2.
\end{align}
After some rearrangements, we obtain:
\begin{align*}
    &f(x_2)+\frac{\rho_0}{2}\|F(x_2)\|^2{\overset{{(\rho\geq3\rho_0)}}{\leq}}f(x_2)+\frac{\rho}{6}\|F(x_2)\|^2\\
    &{\overset{}{\leq}}\bar{P} -1 - \frac{\beta_1}{4}\|x_1-x_0\|^2-\langle \lambda_1  ,F(x_2)\rangle-\frac{\rho}{3}\|F(x_2)\|^2\\
    &\leq\bar{P} -1 - \frac{\beta_1}{4}\|x_1-x_0\|^2-\frac{\rho}{3}\|F(x_2)+\frac{3\lambda_1}{2\rho}\|^2+\frac{3\|\lambda_1\|^2}{4\rho}\\
    &\leq\bar{P} -1 + \frac{3\|\lambda_1\|^2}{\rho}- \frac{\beta_1}{4}\|x_1-x_0\|^2 \overset{\eqref{first_bound}}{\leq} \bar{P}.
\end{align*}
Therefore, we find that $ x_2\in\mathcal{S}^0_{\bar{P}}$.  Hence, we have $x_0,x_1,x_2 \in \mathcal{S}^0_{\bar{P}}$ and it follows that $\mathcal{S}' = \mathcal{S}= \mathcal{S}^0_{\bar{P}}$,  $L_f'=L_f$ and $L_F'=L_F$. Then, using Lemma \ref{conditional_decrease}, it follows that if $\rho$ satisfies \eqref{theright_choice_of_rho} with $k=1$, we get:
\begin{equation}\label{decrease_k=1}
    P_2 - P_1 \leq -\frac{\beta_2}{8}\|x_2-x_1\|^2 - \frac{\beta_1}{8}\|x_1-x_0\|^2.
\end{equation}
Indeed, from our choice of $\rho$ in \eqref{choice_rho}, we have that:
\[
\rho \geq \max\bigg\{ \frac{48(1+3\mu)^2\left(L_f M_F + M_f L_F\right)^2 }{\mu L_f\sigma^4}, \frac{48(1+3\mu)^2 M_F^2}{\sigma^4}(\bar{\beta} - \mu L_f)\bigg\}.
\]
Moreover, since $\beta_1\leq \bar{\beta}$, it follows that:
\[
\rho \geq \max\bigg\{ \frac{48(1+3\mu)^2\left(L_f M_F + M_f L_F\right)^2 }{\mu L_f\sigma^4}, \frac{48(1+3\mu)^2 M_F^2}{\sigma^4}(\beta_1 - \mu L_f)\bigg\}.
\]
For \eqref{theright_choice_of_rho} to be valid for $k=1$, it remains to prove that:
\[
\rho \geq \frac{48(1+3\mu)^2 M_F^2}{\sigma^4}(\beta_2 - \mu L_f).
\]
Hence, it is sufficient to prove that $\beta_2\leq \bar{\beta}$.
Indeed, we have:
\begin{align}\label{beta2_bound}
&\beta_2 \overset{\eqref{bar_beta}}{\leq}   \mu \left(L_f+ L_F\sqrt{2\rho}\sqrt{\mathcal{L}_{\rho}(x_1,\lambda_1) + \frac{1}{2 \rho}\|\lambda_1\|^2 - \ubar{f}}\right)\nonumber\\
& \overset{\eqref{second_bound}}{\leq} \mu \left(L_f+ L_F\sqrt{2\rho}\sqrt{P_1 + 1   - \ubar{f}}\right) \nonumber\\
& \overset{\eqref{P10_bar}}{\leq} \mu \left(L_f+ L_F\sqrt{2\rho}\sqrt{\bar{P}  - \ubar{f}}\right) = \bar{\beta}. 
\end{align}
Hence, $\rho$ satisfies \eqref{theright_choice_of_rho} for $k=1$.
It then follows that for $k=1$, \eqref{important} is  verified. Now, assume that there exists some $k \geq 1$ such that \eqref{important} holds for all $j \leq k$ (induction hypothesis (IH)). We will prove that it also holds for $k+1$. To this end, we follow the same steps as in the case $k = 1$, and therefore only sketch the proof.
We begin by showing that $x_{k+1} \in \mathcal{S}^0_{\bar{P}}$. Since $x_k \in \mathcal{S}^0_{\bar{P}}$ and $\lambda_k$ is bounded (by IH), and given that the subproblem in Step 5 of Algorithm~\ref{alg1} is quadratic and strongly convex, there exists a compact set $\mathcal{S}$ (possibly different from $\mathcal{S}^0_{\bar{P}}$) such that $x_k, x_{k+1} \in \mathcal{S}$.
Moreover, since $x_{k-1}, x_k \in \mathcal{S}^0_{\bar{P}}$, it follows that there exists a compact set $\mathcal{S}' \subseteq \mathcal{S}^0_{\bar{P}} \cup \mathcal{S}$ such that $x_{k-1}, x_k, x_{k+1} \in \mathcal{S}'$. Then, by Assumption~\ref{assump2}, there exist positive constants $L_f', L_F'$, which may differ from  $L_f$ and $L_F$, respectively.
Furthermore, since $\lambda_k$ is bounded, Lemma~\ref{lemma3} implies that if
\[
\beta_{k+1} \geq L_f' + L_F'\sqrt{2\rho}\sqrt{\mathcal{L}_{\rho}(x_k,\lambda_k)+ \frac{1}{2\rho}\|\lambda_k\|^2 - \ubar{f}},
\]
then the following inequality holds:
\begin{equation} \label{x_kkk_bounded}
    \mathcal{L}_{\rho}(x_{k+1},\lambda_k) \leq \mathcal{L}_{\rho}(x_k,\lambda_k) - \frac{\beta_{k+1}}{2}\|x_{k+1} - x_k\|^2.
\end{equation}
Further, using the fact that $P_k \leq \bar{P}-1  $ (see IH) together with the expression of $P_k$  and of $\mathcal{L}_{\rho}$ in \eqref{x_kkk_bounded}, we get:
\begin{align}\label{inekkk}
    &f(x_{k+1})+\langle \lambda_k  ,F(x_{k+1})\rangle+\frac{\rho}{2}\|F(x_{k+1})\|^2 \nonumber\\
    &\leq \bar{P} -1 - \frac{\beta_k}{4}\|\Delta x_k\|^2- \frac{\beta_{k+1}}{2}\|\Delta x_{k+1}\|^2 \leq \bar{P} -1 - \frac{\beta_k}{4}\|\Delta x_k\|^2.
\end{align}
After some rearrangements, we obtain:
\begin{align*}
    &f(x_{k+1})+\frac{\rho_0}{2}\|F(x_{k+1})\|^2{\overset{{(\rho\geq3\rho_0)}}{\leq}}f(x_{k+1})+\frac{\rho}{6}\|F(x_{k+1})\|^2\\
    &\overset{}{\leq}\bar{P} -1 - \frac{\beta_k}{4}\|\Delta x_k\|^2-\langle \lambda_k  ,F(x_{k+1})\rangle-\frac{\rho}{3}\|F(x_{k+1})\|^2\\
    &\leq\bar{P} -1 - \frac{\beta_k}{4}\|\Delta x_k\|^2-\frac{\rho}{3}\|F(x_{k+1})+\frac{3\lambda_k}{2\rho}\|^2+\frac{3\|\lambda_k\|^2}{4\rho}\\
    &\leq\bar{P} -1 + \frac{3\|\lambda_k\|^2}{\rho}- \frac{\beta_k}{4}\|\Delta x_k\|^2 \overset{\eqref{first_bound}}{\leq} \bar{P}.
\end{align*}
Therefore, we have  $ x_{k+1}\in\mathcal{S}^0_{\bar{P}}$. Moreover, since $x_k,x_{k+1}\in\mathcal{S}^0_{\bar{P}}$ and $D_{\bar{P}}$ is the diameter of $\mathcal{S}^0_{\bar{P}}$, then we have:
\begin{align}
& \|\lambda_{k+1}\|^2\overset{\eqref{lam}}{\leq} \left(\frac{1}{\sigma}({\|\nabla f(x_{k})\|+\beta_{k+1}\|\Delta x_{k+1}\|})\right)^2 \overset{\text{Ass. \ref{assump2}}, \eqref{bound_beta}}{\leq}\frac{1}{\sigma^2}({M_f+\bar{\beta} D_{\bar{P}}})^2  \nonumber\\
 &\overset{\eqref{beta_bar_def}}{\leq}\frac{2M_f^2 + 4 \mu^2L_f^2D_{\bar{P}}^2}{\sigma^2} +\frac{8\mu^2L_F^2D_{\bar{P}}^2(\bar{P}   - \ubar{f})}{\sigma^2}\rho  \nonumber\\
 &=\frac{2M_f^2 + 4 \mu^2L_f^2D_{\bar{P}}^2 + 8\mu^2L_F^2D_{\bar{P}}^2(\bar{P}   - \ubar{f})\rho_0}{\sigma^2} +\frac{8\mu^2L_F^2D_{\bar{P}}^2(\bar{P}   - \ubar{f})}{\sigma^2}(\rho-\rho_0)  \nonumber\\
 &\overset{\eqref{choice_rho}}{\leq} \left(\frac{16\mu^2L_F^2D_{\bar{P}}^2(\bar{P}  - \ubar{f})}{\sigma^2} + 1\right)\left(\rho-\rho_0\right)\overset{\eqref{bar_gamma_before}}{\leq} 2\bar{\gamma}\left(\rho-\rho_0\right). \label{25a}
\end{align}
Furthermore, we have $x_{k-1}, x_k\in\mathcal{S}^0_{\bar{P}}$ and $\beta_k\leq \bar{\beta} $ (see IH). Hence, using \eqref{bar_beta} and \eqref{second_bound}, we get:
\begin{align}\label{betak_bound}
\beta_{k+1} & \leq \mu \left(L_f+ L_F\sqrt{2\rho}\sqrt{P_k  + 1 - \ubar{f}}\right)\nonumber\\
& \overset{(IH)}{\leq} \mu \left(L_f+ L_F\sqrt{2\rho}\sqrt{\bar{P} - \ubar{f}}\right) = \bar{\beta}. 
\end{align}
Further, from the induction hypothesis, we also have:
\begin{equation} \label{bound_Pkk}
    P_{k+1} \leq P_{k}  -\frac{\beta_{k+1}}{8}\|\Delta x_{k+1}\|^2 - \frac{\beta_{k}}{8}\|\Delta x_{k}\|^2 \overset{(\text{IH})}{\leq} \bar{P} -2.
\end{equation}
Furthermore, from \eqref{choice_rho}, we have:
\[
\rho \geq \max\bigg\{ \frac{48(1+3\mu)^2\left(L_f M_F + M_f L_F\right)^2 }{\mu L_f\sigma^4}, \; \frac{48(1+3\mu)^2 M_F^2}{\sigma^4}(\bar{\beta} - \mu L_f)\bigg\}.
\]
In addition, from \eqref{betak_bound}, we have $\beta_{k+1} \leq \bar{\beta}$. Similarly, it is easy to obtain that
\begin{align}\label{beta2_bound}
\beta_{k+2} &\leq \mu \left(L_f+ L_F\sqrt{2\rho}\sqrt{\bar{P} - \ubar{f}}\right) = \bar{\beta}.
\end{align}
It then follows from Lemma~\ref{conditional_decrease} that
\begin{equation}\label{decrease_k=1}
    P_{k+2} - P_{k+1} \leq -\frac{\beta_{k+2}}{8}\|\Delta x_{k+2}\|^2 - \frac{\beta_{k+1}}{8}\|\Delta x_{k+1}\|^2, 
\end{equation}
that is, \eqref{important} is  proved.  This completes our proof.  \qed

\medskip

\noindent \textbf{Proof of Lemma \ref{bounded_below}}
Let $k\geq 1$. Since $x_k\in\mathcal{S}^0_{\bar{P}}$, then using \eqref{lyapunov_function},  we have:
\begin{align*}
  P_{k}&\geq f(x_{k})+\frac{\rho}{2}\|F(x_{k})\|^2+\langle \lambda_{k}   , F(x_{k}) \rangle\nonumber\\
  &\geq f(x_{k})+\frac{\rho}{2}\|F(x_{k})\|^2-\frac{\|\lambda_{k}\|^2}{2(\rho-\rho_0)}-\frac{\rho-\rho_0}{2}\|F(x_{k})\|^2\nonumber\\
 & {\overset{{\eqref{25a}}}{\geq}}  f(x_{k})+\frac{\rho_0}{2}\|F(x_{k})\|^2-1{\overset{{(\text{Lemma } \ref{lem1})}}{\geq}}\ubar{P}-1. \label{bound}
\end{align*}
It follows that the sequence $\{P_{k}\}_{k\geq1}$ is bounded from below. This concludes our proof.\qed

\medskip

\noindent \textbf{Proof of Lemma \ref{bounded_grad}}
By exploiting the definition of $P(\cdot)$ defined in \eqref{P}, we have that for any $k\geq1$:
\begin{gather*}  \nabla_xP(x,\lambda,y,\gamma)=\nabla_x{\mathcal{L}_{\rho}}(x,\lambda)+{{\gamma}}(x-y), \hspace{0.2cm}\text{  }\hspace{0.2cm} \nabla_{\lambda}{P}(x,\lambda,y,\gamma)=\nabla_{\lambda}{\mathcal{L}_{\rho}}(x,\lambda)\\
      \nabla_{y}{P}(x,\lambda,y,\gamma)={{\gamma}}(y-x) \hspace{0.2cm}\text{ and }\hspace{0.2cm}
      \nabla_{\gamma}{P}(x,\lambda,y,\gamma)=\frac{1}{2}\|x-y\|^2.
\end{gather*}
Hence,
\begin{align*}
    &\| \nabla P(x_{k+1},\lambda_{k+1},x_{k},\frac{\beta_{k+1}}{2})\| \\
    & \leq\|\nabla{\mathcal{L}_{\rho}}(x_{k+1},\lambda_{k+1})\|
    +\beta_{k+1}\|\Delta x_{k+1}\| +\frac{1}{2}\|\Delta x_{k+1}\|^2\\
    &\leq \left(\Gamma_{k+1} +\beta_{k+1} \right)\|\Delta x_{k+1}\|  +\Gamma_{k}\|\Delta x_{k}\| + \left(c_{k+1} + \frac{1}{2}\right) \|\Delta x_{k+1}\|^2 + c_k\|\Delta x_{k}\|^2\\
&\leq(\bar{\Gamma}+(\bar{c} +1)D_{\bar{P}}+\bar{\beta})\left(\|\Delta x_{k+1}\|+\|\Delta x_{k}\|\right),
\end{align*}
where the last two inequalities follow from Lemma {\ref{bounded_gradient} and  \eqref{boundgam}}. \qed

\medskip

\noindent \textbf{Proof of Lemma \ref{added_lemma}}
\eqref{lem_item1} From Lemma \ref{bbound} and Lemma \ref{bounded_below}, it follows that $\{u_k\}_{k\geq1}$ is bounded and therefore, there exists   a convergent subsequence  $\{u_k\}_{k\in\mathcal{K}}$ such that $\lim_{k\in\mathcal{K}}{u_k}=u^{*}$. Hence $\Omega$ is nonempty. Moreover, $\Omega$ is compact since it is bounded and closed.
On the other hand, for any $u^{*}\in\Omega$, there exists a sequence of increasing integers $\mathcal{K}$ such that $\lim_{k\in\mathcal{K}}{u_k}=u^{*}$ and using Lemma \ref{bounded_grad} and \eqref{zero_limit}, it follows that:
\[
\|\nabla P(u^{*})\|=\lim_{k\in\mathcal{K}}{\|\nabla P(u_k)\|}=0.
\]
Hence, $u^{*}\in\texttt{Stat}P$ and $0\leq \lim_{k\to\infty}{\text{dist}(u_k,\Omega)}\leq\lim_{k\in\mathcal{K}}{\text{dist}(u_k,\Omega)}=\text{dist}(u^{*},\Omega)=0$. This proves the first claim. \\
\noindent \eqref{lem_item2} Since $P$ is continuous function and $ \{P(u_k)=P_k\}_{k\geq1}$ converges to $P^{*}$, then any convergent subsequence $\{P(u_k)=P_k\}_{k\in\mathcal{K}}$, it must converge to the same limit $P^{*}$.  This proves the second claim.\\
\noindent \eqref{lem_item3} Let $(x,\lambda,y,\gamma)\in\texttt{Stat}P$ that is $\nabla P(x,\lambda,y,\gamma)=0$. It then follows that:
\begin{gather*}
\nabla_xP(x,\lambda,y,\gamma) \!=\! \nabla_x{\mathcal{L}_{\rho}}(x,\lambda)+{{\gamma}}(x-y) = \!0, \;  \nabla_{\lambda}{P}(x,\lambda,y,\gamma) \!=\! \nabla_{\lambda}{\mathcal{L}_{\rho}}(x,\lambda)=\! 0, \\
      \nabla_{y}{P}(x,\lambda,y,\gamma)={{\gamma}}(y-x) =0\hspace{0.2cm}\text{ and }\hspace{0.2cm}
      \nabla_{\gamma}{P}(x,\lambda,y,\gamma)=\frac{1}{2}\|x-y\|^2=0.
\end{gather*}
With some minor rearrangements, we obtain:
\[
\nabla f(x)+{{J_F}(x)}^T\lambda=0 \hspace{0.2cm}\text{ and }\hspace{0.2cm} F(x)=0.
\]
Hence, $(x,\lambda)$ is a KKT point of \eqref{eq1}. This concludes our proof.\qed

\medskip

\noindent \textbf{Proof of Lemma \ref{le:feasibility}} 
Let us assume by contradiction that there exists $k\geq 1$ such that $\Vert F(x^k)\Vert >\epsilon$. 
Then, the dual multipliers at the $k$th iteration are updated as $\lambda^k = \lambda^{k-1} + \rho F(x^k)$.
Furthermore, since the dual sequnce $\{\lambda^k\}$ is bounded, there exists $M > 0$ such that $\Vert \lambda^k\Vert  \leq M$ for any $k \geq 0$. 
Therefore, we can show that
\begin{equation*}
	\epsilon < \Vert F(x^k)\Vert = \frac{\Vert \lambda^k -  \lambda^{k-1}\Vert }{\rho} \leq \frac{\Vert \lambda^k \Vert + \Vert \lambda ^{k-1}\Vert }{\rho} \leq \frac{2M}{\rho} \leq \epsilon,
\end{equation*}
which is a contradiction. Hence, we get that $\Vert F(x^k)\Vert \leq \epsilon$ for any $k\geq 1$. \qed

\medskip

\noindent \textbf{Proof of Lemma \ref{le:lemma_R1}}   By strong convexity  of the subproblem at  Line 4 in Algorithm \ref{alg:ALSSF_alg}, the smoothness of $f$ and $F$, the optimality condition of $x^{k+1}$, and  $\lambda^{k+1}=\lambda^k$  for all $k\geq 1$, we have
\begin{equation*}
\mathcal{L}_{\rho}(x^{k+1},\lambda^{k+1})-\mathcal{L}_{\rho}(x^{k},\lambda^{k})\leq -\frac{\beta}{2}\Vert x^{k+1}-x^k\Vert ^2.
\end{equation*}
Moreover, $x^{k+1}$ computed at Line 4 has  the following explicit expression:
\begin{equation*}
x^{k+1}=x^k-\left(\beta I_n+\rho J_F(x^k)^TJ_F(x^k)\right)^{-1}\nabla_x\mathcal{L}_{\rho}(x^k,\lambda^k).
\end{equation*}
Therefore, we get
\begin{equation*}
\Vert x^{k+1}-x^k\Vert \geq \frac{ \Vert \nabla_x\mathcal{L}_{\rho}(x^k,\lambda^k) \Vert }{\left\Vert \beta I_n+\rho J_F(x^k)^TJ_F(x^k)\right\Vert } >\frac{\alpha}{\left\Vert \beta I_n+\rho J_F(x^k)^TJ_F(x^k)\right\Vert }.
\end{equation*}
Furthermore, we also have
\begin{equation*}
\left\Vert \beta I_n+\rho J_F(x^k)^TJ_F(x^k)\right\Vert \leq \beta+\rho M_F^2\leq \beta +L_{\rho} \leq 2\beta.
\end{equation*}
This proves our claim.  \qed

\medskip

\noindent \textbf{Proof of Lemma \ref{le:lemma_R2}}  Let us choose a direction $d_k\in\mathbb{R}^n$ satisfying
\begin{equation*}
\Vert d_k\Vert  =1, \quad \langle \nabla_x\mathcal{L}_{\rho}(x^k,\lambda^k),d_k\rangle\leq0, \quad \textrm{and} \quad d_k^T\nabla^2_{xx}\mathcal{L}_{\rho}(x^k,\lambda^k)d_k\leq -\theta  \rho^{\zeta_1}.
\end{equation*}
Such a vector $d_k$ is well-defined since $x^k\in\mathcal{R}_2$.  Furthermore, let $\omega>0$ be small such that $x^k+\omega d_k$ belongs to a level set of $\Lc_{\rho}$. 
In fact,  since $\nabla^2_{xx}\mathcal{L}_{\rho}(\cdot,\lambda^k)$ is locally Lipschitz (see Assumptions \ref{as:Assu_SSF}), it follows that for any $0<\omega\leq\frac{\theta  \rho^{\zeta_1}}{H_{\rho}}$, we have
\begin{equation}\label{hessian_lipschitz}
\arraycolsep=0.2em
\begin{array}{lcl}
	\mathcal{L}_{\rho}(x^{k}+\omega d_k,\lambda^{k}) & \leq &  \hat{\mathcal{Q}}_{\mathcal{L}_{\rho}}(x^{k}+\omega d_k,\lambda^k;x^k) + \frac{H_{\rho}}{6} \omega^3 \vspace{1ex}\\
	& \leq &  \mathcal{L}_{\rho}(x^{k},\lambda^{k}) - \frac{1}{2} \theta  \rho^{\zeta_1} \omega^2 +  \frac{H_{\rho}}{6} \omega^3 \vspace{1ex}\\
	& \leq &  \mathcal{L}_{\rho}(x^{k},\lambda^{k}) - \frac{1}{2} \theta  \rho^{\zeta_1} \omega^2 +  \frac{H_{\rho}}{6}  \frac{\theta  \rho^{\zeta_1}}{H_{\rho}} \omega^2 \vspace{1ex}\\
        & = & \mathcal{L}_{\rho}(x^{k},\lambda^{k}) - \frac{1}{3} \theta  \rho^{\zeta_1} \omega^2. 
\end{array}     
\end{equation}
Hence, given that $x^k$ belongs to a level set of $\Lc_{\rho}$,  for any $0<\omega\leq\frac{\theta  \rho^{\zeta_1}}{H_{\rho}}$, the point $x^k+\omega d_k$ also belongs to the same level set of $\Lc_{\rho}$.
Therefore, by choosing  $H_{\rho} \leq  \upsilon \leq 2 H_{\rho}$ and   $0<\omega=\frac{\theta  \rho^{\zeta_1}}{2 \upsilon} \leq \frac{\theta  \rho^{\zeta_1}}{H_{\rho}}$, utilizing \cite[Lemma 4.1.5]{Nes:18}, and the Lipschitzness of the Hessian of $\Lc_{\rho}$, it follows that
\begin{equation*}
\arraycolsep=0.0em
\begin{array}{lcl}
	\mathcal{L}_{\rho}(x^{k+1},\lambda^{k+1}) & = &  \mathcal{L}_{\rho}(x^{k+1},\lambda^{k}) \vspace{1ex}\\
	& \overset{\tiny\textrm{\cite[Lemma 4.1.5]{Nes:18}} }{\leq} & \mathcal{L}_{\rho}(x^{k}+\omega d_k,\lambda^{k})+\frac{\upsilon}{3}\omega^3 \vspace{1ex} \\
	& \overset{\eqref{hessian_lipschitz}}{\leq} &  \mathcal{L}_{\rho}(x^{k},\lambda^{k}) - \frac{1}{3} \theta  \rho^{\zeta_1}\omega^2 + \frac{\upsilon}{3}\frac{\theta  \rho^{\zeta_1}}{2 \upsilon}\omega^2 \vspace{1ex}\\
	& = & \mathcal{L}_{\rho}(x^{k},\lambda^{k}) -\frac{\theta  \rho^{\zeta_1}}{6}\omega^2 \vspace{1ex}\\
	& = & \mathcal{L}_{\rho}(x^{k},\lambda^{k}) -\frac{\theta^3  \rho^{3\zeta_1}}{24 \upsilon^2}.
\end{array}
\end{equation*}
Finally, utilizing $\upsilon \leq 2 H_{\rho}$, our claim follows from the last expression.   \qed

\medskip

\noindent \textbf{Proof of Lemma \ref{le:quadratic}}     From the optimality condition of $x^{k+1}$ at Line 7, we get
\begin{equation*}
	x^{k+1} -x^{*} = x^k - x^{*} -\frac{1}{\beta}\nabla_x\mathcal{L}_{\rho}(x^k,\lambda^k).
\end{equation*}
Hence, we have
\begin{equation*}
	\Vert x^{k+1} - x^{*}\Vert ^2 = \Vert x^k - x^{*}\Vert ^2 - \frac{2}{\beta}\langle  \nabla_x \mathcal{L}_{\rho}(x^k, \lambda^k), x^{k} - x^{*} \rangle + \frac{1}{\beta^2}\Vert  \nabla_x \mathcal{L}_{\rho}(x^k, \lambda^k)\Vert ^2.
\end{equation*}
Moreover, since  the augmented Lagrangian function $\Lc_{\rho}$ is $\gamma \rho^{\zeta_2}$-strongly convex and $L_{\rho}$-smooth on  the region $\mathcal{R}_3\setminus\mathcal{R}_1$, and  $\nabla_x \mathcal{L}_{\rho}(x^{*}, \lambda^k)= 0 $,  we have
\begin{equation*}
	\langle \nabla_x \mathcal{L}_{\rho}(x^{k}, \lambda^k), x^{k} - x^{*} \rangle \geq \mathcal{L}_{\rho}(x^{k}, \lambda^k)- \mathcal{L}_{\rho}(x^{*}, \lambda^k) + \tfrac{\gamma \rho^{\zeta_2} }{2} \Vert x^{k} - x^{*}\Vert ^2
\end{equation*}
and
\begin{equation*}
	\tfrac{1}{2 L_{\rho}}   \Vert  \nabla_x \mathcal{L}_{\rho}(x^k, \lambda^k)\Vert ^2 \leq \mathcal{L}_{\rho}(x^{k}, \lambda^k)- \mathcal{L}_{\rho}(x^{*}, \lambda^k).
\end{equation*}
Therefore, we can derive that
\begin{equation*}
\arraycolsep=0.2em
\begin{array}{lcl}
        \Vert x^{k+1} - x^{*}\Vert ^2 & \leq & \left(1 - \frac{\gamma \rho^{\zeta_2} }{\beta}\right) \Vert x^k - x^{*}\Vert ^2 - \frac{2}{\beta} \left(1-\frac{L_{\rho}}{\beta}\right)\left(\mathcal{L}_{\rho}(x^{k}, \lambda^k)- \mathcal{L}_{\rho}(x^{*}, \lambda^k)\right) \vspace{1ex}\\
        & \leq &  \left(1- q_{\rho}\right) \Vert x^k - x^{*}\Vert ^2 \leq \Vert x^k - x^{*}\Vert ^2 \leq \xi^2.
\end{array}
\end{equation*}
This allows us to conclude that whenever $x^k$ is in $\mathcal{R}_3 \setminus \mathcal{R}_1$, the iterates remain in this region.   Finally, for any $N \geq 1$, using the characterization of  $\mathcal{R}_3$, we have  $\Vert x^k-x^{*}\Vert \leq \xi$ and thus
\begin{equation*}
	\Vert x^{k+N} - x^{*}\Vert  \leq  \left(1- q_{\rho}\right)^{\frac{N}{2}} \Vert x^k - x^{*}\Vert  \leq \left(1- q_{\rho}\right)^{\frac{N}{2}}\xi.
\end{equation*}
Consequently, for any $\epsilon > 0$, after $N= \mathcal{O}{\big( \frac{1}{q_{\rho}} \log{\big(\frac{L_{\rho}\xi}{\epsilon} \big)} \big)} = \mathcal{O}\big( \frac{1}{\epsilon^{1-\zeta_2}} \log\big(\frac{1}{\epsilon} \big) \big)$ iterations, we have $\Vert \nabla_x \mathcal{L}_{\rho}(x^{k+N}, \lambda^k)\Vert  \leq L_{\rho}\Vert x^{k+N} - x^{*}\Vert  \leq \epsilon$.
This completes our proof. \qed 

\medskip

\noindent \textbf{Proof of Lemma \ref{le:R3_enter}}    Let $K \geq $ be the first iteration such that $x^{K+1} \in \mathcal{R}_3\setminus \mathcal{R}_1$. 
Let us introduce
\begin{equation*}
S_1 \triangleq \{k \in [1: K] : x^k \in \mathcal{R}_1\}  \quad \textrm{and} \quad  S_2 \triangleq \{k \in [1: K] : x^k \in \mathcal{R}_2 \setminus \mathcal{R}_1\}. 
\end{equation*}
Since $\lambda^k = \lambda^1$ for all $k >1$, we have
\begin{align*}
& \mathcal{L}_{\rho}(x^{1},\lambda^{1}) - \mathcal{L}_{\rho}(x^{*},\lambda^{1})  \geq   \mathcal{L}_{\rho}(x^{1},\lambda^{1}) - \mathcal{L}_{\rho}(x^{K+1},\lambda^{K+1}) \vspace{1ex} \\
& =   \sum_{k=1}^{K} \left( \mathcal{L}_{\rho}(x^{k},\lambda^{k}) - \mathcal{L}_{\rho}(x^{k+1},\lambda^{k+1}) \right) \vspace{1ex} \\
& =   \sum_{k \in S_1} \left( \mathcal{L}_{\rho}(x^{k},\lambda^{k}) - \mathcal{L}_{\rho}(x^{k+1},\lambda^{k+1}) \right) + \sum_{k \in S_2} \left( \mathcal{L}_{\rho}(x^{k},\lambda^{k}) - \mathcal{L}_{\rho}(x^{k+1},\lambda^{k+1}) \right).
\end{align*}
By Lemma \ref{le:lemma_R1} and Lemma \ref{le:lemma_R2}, we obtain
\begin{equation*}
\mathcal{L}_{\rho}(x^{1},\lambda^{1}) - \mathcal{L}_{\rho}(x^{*},\lambda^{1}) \geq \vert S_1 \vert \frac{\alpha^2}{8\beta} + \vert S_2 \vert \frac{\theta^3  \rho^{3\zeta_1}}{96 H_\rho^2}.
\end{equation*}
This expression implies
\begin{equation*}
\vert S_1 \vert  \leq \! \frac{8\beta \! \left(\mathcal{L}_{\rho}(x^{1},\lambda^{1}) -\! \mathcal{L}_{\rho}(x^{*},\lambda^{1})\right)}{\alpha^2} \;\; \textrm{and} \;\; 
\vert S_2 \vert \leq \! \frac{96 H_\rho^2 \! \left(\mathcal{L}_{\rho}(x^{1},\lambda^{1}) -\! \mathcal{L}_{\rho}(x^{*},\lambda^{1})\right)}{\theta^3  \rho^{3\zeta_1}}.
\end{equation*}
Combining the last two bounds, we can show that
\begin{equation*}
K = \vert S_1 \vert + \vert S_2\vert \leq \left(\mathcal{L}_{\rho}(x^{1},\lambda^{1}) - \mathcal{L}_{\rho}(x^{*},y^{1})\right)\left(\frac{8\beta}{\alpha^2}+ \frac{96 H_\rho^2}{\theta^3  \rho^{3\zeta_1}}\right).
\end{equation*}
Moreover, since  our primal and dual iterates are assumed bounded,  it follows that  $\mathcal{L}_{\rho}(x^{1},y^{1})- \mathcal{L}_{\rho}(x^{*},\lambda^{1}) $ is bounded; additionally, since    $\beta \geq L_\rho$ and  $L_\rho, H_\rho$ are proportional to $\rho$ and since  $\rho$ is of order $\mathcal{O}\left(\frac{1}{\epsilon}\right)$,  our final  claim follows.   \qed


\end{document}